\documentclass{amsart}

\usepackage{hyperref}
\usepackage{amsmath}
\usepackage{amsrefs}
\usepackage[all]{xy}

\date{3 May 2013}

\newtheorem{thm}{Theorem}[section]
\numberwithin{equation}{section}

\newtheorem{lemma}[thm]{Lemma}
\newtheorem{prop}[thm]{Proposition}

\theoremstyle{definition}
\newtheorem{definition}[thm]{Definition}

\theoremstyle{remark}
\newtheorem{remark}[thm]{Remark}

\hyphenation{pre-print}

\def\mathcs{C^{*}}
\newcommand{\cs}{\ensuremath{\mathcs}}

\DeclareMathSymbol{\rtimes}{\mathbin}{AMSb}{"6F}
\newcommand{\ib}{im\-prim\-i\-tiv\-ity bi\-mod\-u\-le}

\newcommand{\sme}{\,\mathord{\mathop{\text{--}}\nolimits_{\relax}}\,}
\def\ibind#1{\mathop{#1\mathord{\mathop{\text{--}}}}\!\Ind\nolimits}

\def\C{\mathbf{C}}
\def\T{\mathbf{T}}

\DeclareMathOperator{\Res}{Res}

\DeclareMathOperator{\Ind}{Ind}

\DeclareMathOperator{\Prim}{Prim}
\DeclareMathOperator*{\supp}{supp}

\DeclareMathOperator{\Aut}{Aut}
\def\set#1{\{\,#1\,\}}
\def\sset#1{\{#1\}}
\let\tensor=\otimes
\def\restr#1{|_{{#1}}}

\newbox\hidebox
\def\spechide#1{\setbox\hidebox=\hbox{$#1$}
\hbox to\wd\hidebox{$\box\hidebox^\wedge$\hss}}
%
\makeatletter
\def\labelenumi{\textnormal{(\@alph\c@enumi)}}
\def\theenumi{\@alph \c@enumi}
\def\labelenumii{\textnormal{(\@roman\c@enumii)}}
\def\theenumii{\@roman \c@enumii}
\newcount\charno
\def\alphapart#1{\charno=96
\advance\charno by#1\char\charno}

\makeatother
%
\def\<{\langle}
\def\>{\rangle}
\let\ipscriptstyle=\scriptscriptstyle
\def\lipsqueeze{{\mskip -3.0mu}}
\def\ripsqueeze{{\mskip -3.0mu}}
\def\ipcomma{\nobreak\mathrel{,}\nobreak}
\newbox\ipstrutbox
\setbox\ipstrutbox=\hbox{\vrule height8.5pt
width 0pt}
\def\ipstrut{\copy\ipstrutbox}
\def\lip#1<#2,#3>{\mathopen{\relax_{\ipstrut\ipscriptstyle{
#1}}\lipsqueeze
\langle} #2\ipcomma #3 \rangle}
\def\blip#1<#2,#3>{\mathopen{\relax_{\ipstrut
\ipscriptstyle{ #1}}\lipsqueeze\bigl\langle} #2\ipcomma #3 \bigr\rangle}
\def\rip#1<#2,#3>{\langle #2\ipcomma #3
\rangle_{\ripsqueeze\ipstrut\ipscriptstyle{#1}}}
\def\brip#1<#2,#3>{\bigl\langle #2\ipcomma #3
\bigr\rangle_{\ripsqueeze\ipstrut\ipscriptstyle{#1}}}
\def\angsqueeze{\mskip -6mu}
\def\smangsqueeze{\mskip -3.7mu}
\def\trip#1<#2,#3>{\langle\smangsqueeze\langle #2\ipcomma #3
\rangle\smangsqueeze\rangle_{\ripsqueeze\ipstrut\ipscriptstyle{#1}}}
\def\btrip#1<#2,#3>{\bigl\langle\angsqueeze\bigl\langle #2\ipcomma
#3
\bigr\rangle
\angsqueeze\bigr\rangle_{\ripsqueeze\ipstrut\ipscriptstyle{#1}}}
\def\tlip#1<#2,#3>{\mathopen{\relax_{\ipstrut\ipscriptstyle{
#1}}\lipsqueeze \langle\smangsqueeze\langle} #2\ipcomma #3
\rangle\smangsqueeze\rangle}
\def\btlip#1<#2,#3>{\mathopen{\relax_{\ipstrut\ipscriptstyle{
#1}}\lipsqueeze
\bigl\langle\angsqueeze\bigl\langle} #2\ipcomma #3
\bigr\rangle\angsqueeze\bigr\rangle}

\def\ip(#1|#2){(#1\mid #2)}
\def\bip(#1|#2){\bigl(#1 \mid #2\bigr)}
\def\Bip(#1|#2){\Bigl( #1 \bigm| #2 \Bigr)}
%
%
%
\IfFileExists{mathrsfs.sty}{\usepackage{mathrsfs}}
{\let\mathscr\mathcal}
\newcommand{\bundlefont}[1]{\mathscr{#1}}
\newcommand{\A}{\bundlefont A}
\newcommand{\B}{\bundlefont B}

\newcommand\E{\bundlefont E}
\newcommand{\go}{G^{(0)}}
\def\sa_#1(#2,#3){\Gamma_{#1}(#2;#3)}
\newcommand\prima{\Prim A}
\let\phi\varphi

\renewcommand\H{\mathcal{V}}
\newcommand\HH{\mathcal{H}}
\newcommand\WW{\mathcal{W}}
\newcommand\half{\frac12}
\newcommand\mug{\mu}
\newcommand\muh{\nu}
\newcommand\mugmh{\bar\mu}
\newcommand\indgh{\Ind_{H}^{G}}
\newcommand\X{\mathsf{X}}
\newcommand\xind{\ibind{\X}}
\newcommand\pr{\operatorname{pr}}
\newcommand\VV{\mathscr{V}}
\newcommand\pihat{\hat\pi}
\newcommand\Pihat{\hat\Pi}
\newcommand\I{\mathscr{I}}

\def\charfcn#1{\mathbf{1}_{#1}}

\newcommand\indgug{\Ind_{G(u)}^{G}}

\newcommand\Rip{\rip \scriptstyle\star}
\newcommand\Lip{\lip\scriptstyle\star}

\expandafter\ifx\csname BibSpec\endcsname\relax\else
\BibSpec{collection.article}{%
    +{}  {\PrintAuthors}                {author}
    +{,} { \textit}                     {title}
    +{.} { }                            {part}
    +{:} { \textit}                     {subtitle}
    +{,} { \PrintContributions}         {contribution}
    +{,} { \PrintConference}            {conference}
    +{}  {\PrintBook}                   {book}
    +{,} { }                            {booktitle}
    +{,} { }                            {series}
    +{,} { \voltext}                    {volume}
    +{,} { }                            {publisher}
    +{,} { }                            {organization}
    +{,} { }                            {address}
    +{,} { \PrintDateB}                 {date}
    +{,} { pp.~}                        {pages}
    +{,} { }                            {status}
    +{,} { \PrintDOI}                   {doi}
    +{,} { available at \eprint}        {eprint}
    +{}  { \parenthesize}               {language}
    +{}  { \PrintTranslation}           {translation}
    +{;} { \PrintReprint}               {reprint}
    +{.} { }                            {note}
    +{.} {}                             {transition}
}
\BibSpec{article}{%
    +{}  {\PrintAuthors}                {author}
    +{,} { \textit}                     {title}
    +{.} { }                            {part}
    +{:} { \textit}                     {subtitle}
    +{,} { \PrintContributions}         {contribution}
    +{.} { \PrintPartials}              {partial}
    +{,} { }                            {journal}
    +{}  { \textbf}                     {volume}
    +{}  { \PrintDatePV}                {date}
    +{,} { \eprintpages}                {pages}
    +{,} { }                            {status}
    +{,} { \PrintDOI}                   {doi}
    +{,} { available at \eprint}        {eprint}
    +{}  { \parenthesize}               {language}
    +{}  { \PrintTranslation}           {translation}
    +{;} { \PrintReprint}               {reprint}
    +{.} { }                            {note}
    +{.} {}                             {transition}
}
\BibSpec{book}{%
    +{}  {\PrintPrimary}                {transition}
    +{,} { \textit}                     {title}
    +{.} { }                            {part}
    +{:} { \textit}                     {subtitle}
    +{,} { \PrintEdition}               {edition}
    +{}  { \PrintEditorsB}              {editor}
    +{,} { \PrintTranslatorsC}          {translator}
    +{,} { \PrintContributions}         {contribution}
    +{,} { }                            {series}
    +{,} { \voltext}                    {volume}
    +{,} { }                            {publisher}
    +{,} { }                            {organization}
    +{,} { }                            {address}
    +{,} { pp.~}                        {pages}
    +{,} { \PrintDateB}                 {date}
    +{,} { }                            {status}
    +{}  { \parenthesize}               {language}
    +{}  { \PrintTranslation}           {translation}
    +{;} { \PrintReprint}               {reprint}
    +{.} { }                            {note}
    +{.} {}                             {transition}
}
\fi


\emergencystretch=25pt
\allowdisplaybreaks

\begin{document}

\title[Irreducible Representations]{\boldmath Irreducible Induced
  Representations of Fell Bundle \cs-Algebras}

\author{Marius Ionescu}

\address{Department of Mathematics \\ Colgate University,
  Hamilton, NY 13346}

\email{mionescu@colgate.edu}

\author{Dana P. Williams}
\address{Department of Mathematics \\ Dartmouth College \\ Hanover, NH
03755-3551}

\email{dana.williams@Dartmouth.edu}

\thanks{Both Authors were supported by individual grants from the
  Simons Foundation.}  \thanks{Dana would also like to thank Marius and his
  colleagues at Colgate for a very pleasant and productive visit.}

\begin{abstract}
We give precise conditions under which irreducible representations
associated to stability groups induce to irreducible representations
for Fell bundle \cs-algebras.  This result generalizes an earlier
result of Echterhoff 
and the second author.  Because the Fell bundle construction subsumes
most other examples of \cs-algebras constructed from dynamical
systems, our result percolates down to many different constructions
including the many flavors of groupoid crossed products.
\end{abstract}

\subjclass[2000]{46L05; 46L55} \keywords{Fell bundle, irreducible
  representation, ideal structure, Fell bundle $C^{*}$-algebra}
\maketitle

\section{Introduction}
\label{sec:intro}

One of the fundamental tasks in any study of the ideal structure of
\cs-algebras associated to dynamical systems is to
construct a suitably large class of irreducible representations.  In
the type~I case, it makes sense to try to find representatives of all
equivalence classes of irreducible representations (for example, see
\cite{wil:crossed}*{Theorem~8.16}).  In general, in the presence of
suitable amenability, it is only reasonable to try to construct enough
irreducible representations to account for all primitive ideals.  The
quintessential example is the Gootman-Rosenberg-Sauvagoet Theorem (see
\cite{wil:crossed}*{\S8.3} for a precise statement and further
references): the GRS-Theorem says that for a separable \cs-dynamical
system $\alpha:G\to\Aut A$ with $G$ amenable, every primitive ideal in
$A\rtimes_{\alpha}G$ is induced in an appropriate sense from a
stability group $G_{P}=\set{s\in G:s\cdot P=P}$ with respect to the
induced action of $G$ on $\prima$ for some $P\in\prima$.

Motivated in part by the GRS-Theorem and by results in the case where
the action of $G$ on $\prima$ is smooth, Echterhoff and the second
author have conjectured that every separable \cs-dynamical system
$(A,G,\alpha)$ satisfies the \emph{Effros-Hahn Induction Property}
(EHI) which asserts that if $P\in\prima$ and $J\in
\Prim(A\rtimes_{\alpha}G_{P})$ with $\Res J=P$, then
$\Ind_{G_{P}}^{G}J$ is a primitive ideal in $A\rtimes_{\alpha}G$. (See
\cite{echwil:tams08}*{\S2} for precise definitions and additional
details.)  Although the validity of the conjecture is open in general
--- even when $G$ is amenable and the GRS Theorem holds --- it was
shown in \cite{echwil:tams08} to hold in a wide variety of cases
including all separable systems with $A$ of type~I.  Moreover, in all
cases in which the conjecture is known to hold, a stronger property
holds, called strong-EHI, which asserts that if $\rho\rtimes \pi$ is
an irreducible representation of $A\rtimes_{\alpha}G_{P}$ with
$\ker\rho=P$, then $\Ind_{G_{P}}^{G} (\rho\rtimes\pi)$ is an
irreducible representation of $A\rtimes_{\alpha}G$.  The key
observation in \cite{echwil:tams08} concerning group \cs-dynamical
systems is that strong-EHI always holds if, \emph{in addition}, $\rho$
is assumed to be a \emph{homogeneous} representation (as defined in,
for example, \cite{wil:crossed}*{Definition~G.1}) --- this is the
content of \cite{echwil:tams08}*{Theorem~1.7}.

Our goal here is to extend the results on inducing irreducible
representations in \cite{echwil:tams08}, and
\cite{echwil:tams08}*{Theorem~1.7} in particular, to other sorts of
dynamical system constructions built not only on groups but on
groupoids.  At first glance, there are a horrifying number of
potential targets for such an analysis.  For example, there are
groupoid \cs-algebras with or without a cocycle, and more generally,
one could consider the \cs-algebras associated to twists over
groupoids (also called $\T$-groupoids).  There are also Green twisted
dynamical systems, groupoid dynamical systems and even twisted
versions of groupoid dynamical systems to name a few of the most
important.  Fortunately, as described in detail in
\cite{muh:cm01}*{\S3} or \cite{muhwil:dm08}*{\S2}, all these variants
are subsumed using the \cs-algebra of a separable Fell bundle $p:\B\to
G$ over a locally compact groupoid $G$ with a Haar system.  In this
event, the sections $A=\sa_{0}(\go ,\B)$ form a \cs-algebra and the
groupoid $G$ acts continuously on $\prima$.  Any representation $L$ of
$\cs(G_{P},\B)$ is associated to a representation $\pi$ of the
\cs-algebra $A$.  Our main theorem (Theorem~\ref{thm-main-1.7}) says
that if $L$ is irreducible, $\ker \pi=P$ and $\pi$ is homogeneous,
then $\Ind_{G_{P}}^{G}L$ is irreducible.  This result extends
\cite{echwil:tams08} and we will illustrate how it ``trickles down''
to other dynamical systems settings in Section~\ref{sec:examples}.

Our proof requires an intermediate result which is of considerable
interest on its own.  Namely if $p:\B\to G$ is a separable Fell bundle
over a locally compact groupoid $G$ with Haar system, then we show
that if $u\in \go$, if $G(u)=\set{x\in G:r(x)=u=s(x)}$ is the
stability group of $u$ in $G$ and if $L$ is an irreducible
representation of $\cs(G(u),\B)$, then $\Ind_{G(u)}^{G} L$ is an
irreducible representation of $\cs(G,\B)$ (Theorem~\ref{thm:indirr}).
This result is a direct generalization of
\cite{ionwil:pams08}*{Theorem~5} where the result is proved for
groupoid \cs-algebras (so that $\B$ is the trivial bundle
$\B=G\times\C$).  In fact the proof is disarmingly similar to that in
\cite{ionwil:pams08}, but extra care must be taken to account for the
rather significant difference between scalar-valued sections of a
trivial bundle and Banach space-valued sections of potentially highly
nontrivial Fell bundles.  Combining Theorem~\ref{thm:indirr} with the
usual induction in stages allows us to reduce the proof of our main
theorem to the more comforting setting of a Fell bundle over a group
(rather than a groupoid).

Our paper is organized as follows.  We start in Section~\ref{sec:step-i}
with a very brief review of induced representations of Fell bundle
\cs-algebras and prove our generalization, Theorem~\ref{thm:indirr},
of \cite{ionwil:pams08}*{Theorem~5}. In Section~\ref{sec:red-group-case} we
give the precise definition of the strong Effros-Hahn Induction
property in the Fell bundle setting.  In Section~\ref{sec:group-case}, we
give our proof of the Main Theorem taking advantage of induction in
stages and Theorem~\ref{thm:indirr} to
reduce to the case that $G$ is 
a group.  In Section~\ref{sec:type-i-case} we see that the additional
hypothesis of homogeneity is automatically satisfied in the case that
points are locally closed in the \cs-algebra $A=\sa_{0}(\go,\B)$
associated to the Fell bundle $p:\B\to G$.  While this includes many
interesting classes of algebras, it in particular applies any time $A$
is of type~I.  In Section~\ref{sec:examples} we examine how the Fell
bundle result applies to the examples
of groupoid dynamical systems and their twisted counterparts.  It is
worth noting that special cases of the latter include Green twisted
systems in the case where $G$ is a group, and the \cs-algebras of
twists or $\T$-goupoids when $\B$ is a trivial line bundle.

\subsubsection*{Assumptions}
\label{sec:assumptions}

Throughout, $p:\B\to G$ will be a saturated, separable Fell bundle
over a locally compact groupoid $G$ as defined in \cite{muhwil:dm08}.
Thus $p:\B\to G$ is an upper semicontinuous Banach bundle over a
second countable locally compact groupoid $G$ such that its continuous
sections, vanishing at infinity on $G$, $\sa_{0}(G,\B)$, form a
separable Banach space with respect to the supremum norm.
Furthermore, all our groupoids are assumed to be second countable,
locally compact and Hausdorff.  When $G$ is a groupoid, it will be
assumed to have a Haar system $\sset{\lambda^{u}}_{u\in\go}$.  We will
write $A=\sa_{0}(\go,\B)$ for the \cs-algebra of $\B$ over $\go$.  We
then follow \cite{muhwil:dm08}*{\S1} to make the compactly supported
continuous sections $\sa_{c}(G,\B)$ into a $*$-algebra with
\cs-completion $\cs(G,\B)$.  When dealing with any sort of Banach
bundle $p:\B\to X$, we will use a roman font, $B(x)$, to indicated the
fibre over $x\in X$ together with its Banach space structure.  When we
have the need to work with a Fell bundle over a group --- in
particular, when we restrict a Fell bundle over a groupoid to a subgroup
--- we will, for the sake of consistency, treat the underlying group
as a groupoid as regards our conventions with modular functions.\footnote{The
  issue is that in the convolution algebra, the involution for
  groupoids has no modular function (since groupoids don't have
  modular functions until a quasi-invariant measure is picked).  The
  modular function then reappears in the integrated forms of
  representations.}  (See \cite{kmqw:nyjm10}*{\S1.5} for an
elaboration on this.)

\section{Inducing from $G(u)$}
\label{sec:step-i}

In this section we prove the following generalization of
\cite{ionwil:pams08}*{Theorem~5} to Fell bundle \cs-algebras.

\begin{thm}
  \label{thm:indirr}
  Suppose that $p:\B\to G$ is a separable Fell bundle over a locally
  compact groupoid with a Haar system.  Let $u\in\go$ and let
  $G(u):=\set{x\in G:r(x)=u=s(x)}$ be the stability group at $u$.
  Suppose that $L$ is an irreducible representation of $\cs(G(u),\B)$.
  Then $\Ind_{G(u)}^{G}L$ is an irreducible representation of
  $\cs(G,\B)$.
\end{thm}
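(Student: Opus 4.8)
The plan is to build $\indgug L$ concretely and then show its commutant is trivial. First I would recall the construction reviewed in this section (following \cite{muhwil:dm08}): writing $G_{u}=\set{x\in G:s(x)=u}$, the sections $\sa_{c}(G_{u},\B)$ carry a left action of $\sa_{c}(G,\B)$ by convolution together with a $\cs(G(u),\B)$-valued inner product, and the completion $\X_{u}$ is a right-Hilbert $\cs(G,\B)$\,--\,$\cs(G(u),\B)$ bimodule; then $\indgug L$ is the Rieffel-induced representation on $\HH:=\X_{u}\otimes_{\cs(G(u),\B)}\HH_{L}$, where $\HH_{L}$ is the space of $L$. (Viewed over $\cs(G\restr{[u]},\B)$ rather than over $\cs(G,\B)$, with $[u]$ the orbit of $u$, the bimodule $\X_{u}$ is in fact an \ib, so $\indgug L$ is irreducible \emph{as a representation of} $\cs(G\restr{[u]},\B)$; but $[u]$ need not be closed, so this does not by itself settle irreducibility over $\cs(G,\B)$.) Since a representation is irreducible precisely when its commutant is $\C\cdot1$, it suffices to prove that every $T\in B(\HH)$ commuting with $\indgug L(\sa_{c}(G,\B))$ is a scalar.

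Next I would pass to the disintegrated picture. By a Fell-bundle version of Renault's disintegration theorem (see \cite{muhwil:dm08}), $\indgug L$ is unitarily equivalent to the integrated form of a representation of $p:\B\to G$ on a Borel Hilbert bundle $\go\ast H$ with respect to a quasi-invariant measure $\mu$ on $\go$; unwinding the construction of $\X_{u}$ shows $\mu$ may be taken to be the image under $r\colon G_{u}\to\go$ of the measure on $G_{u}$ built from the Haar system, so that $\mu$ is concentrated on the orbit $[u]\cong G_{u}/G(u)$ and is ergodic for the $G$-action. The delicate computation here is the identification of the fibre over the basepoint: $H(u)$ is unitarily $\HH_{L}$, and the representation of $\cs(G(u),\B)$ on $H(u)$ furnished by the disintegration is unitarily equivalent to $L$ itself. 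This is exactly where the passage from scalar-valued sections of the trivial bundle in \cite{ionwil:pams08} to Banach-space-valued sections of a possibly highly nontrivial $\B$ demands care: the fibrewise identifications, and the approximate identities used to produce them, must be tracked through $\B$.

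Granting this, the commutant computation is the one in \cite{ionwil:pams08}*{Theorem~5}. An operator $T$ commuting with $\indgug L$ commutes in particular with the action of $A=\sa_{0}(\go,\B)$; since $A$ acts through the Hilbert bundle over $\go$, a standard argument using an approximate identity for $A$ shows $T$ is decomposable, $T=\int_{\go}^{\oplus}T_{v}\,d\mu(v)$, with each $T_{v}$ lying in the commutant of the fibre representation of $A$ on $H(v)$ for $\mu$-almost every $v$. Commuting with $\sa_{c}(G,\B)$ then forces the covariance relation $T_{r(x)}L_{x}=L_{x}T_{s(x)}$ for $\nu$-almost every $x\in G$, where $\nu$ is the measure on $G$ induced by $\mu$ and the Haar system and $x\mapsto L_{x}$ is the unitary field of the disintegration. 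Ergodicity of $\mu$ together with this relation shows the field $(T_{v})$ is determined by the single operator $T_{u}$, and that $T_{u}$ lies in the commutant of the representation of $\cs(G(u),\B)$ on $H(u)$; since that representation is equivalent to the irreducible $L$, we conclude $T_{u}\in\C$, whence $T\in\C\cdot1$.

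The main obstacle is not the strategy, which closely follows \cite{ionwil:pams08}, but the bundle bookkeeping: verifying that the Fell-bundle disintegration theorem applies to $\indgug L$ with the expected ergodic quasi-invariant measure, and especially producing the unitary identifying $H(u)$ with $\HH_{L}$ and intertwining the fibre representation of $\cs(G(u),\B)$ with $L$, all while working with genuinely Banach-space-valued sections of $\B$ rather than scalar functions. A secondary technical point is the approximate-identity argument giving decomposability of $T$ and the upgrade of the almost-everywhere covariance relation, on an ergodic orbit, to the assertion that $T$ is pinned down by its value at $u$.
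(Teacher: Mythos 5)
Your strategy is genuinely different from the paper's: you disintegrate $\indgug L$ over $(\go,\mu)$ and try to compute the commutant directly via decomposability, an almost-everywhere covariance relation, and ergodicity of the transitive measure. The paper never disintegrates. Instead it shows (Lemma~\ref{lemma6}) that for every $F\in\sa_{c}(G(u)^G,\sigma^*\B)$ there is a net $f_K\in\sa_{c}(G,\B)$, uniformly bounded in $\|\cdot\|_I$ and supported in a fixed compact set, with $(\indgug L)(f_K)\to(\xind L)(F)$ in the weak operator topology; the construction of $f_K$ exploits the injectivity of $[z,y]\mapsto zy^{-1}$ on the imprimitivity groupoid $G(u)^G$, which holds precisely because the subgroupoid has unit space $\{u\}$. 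Hence the commutant of $\indgug L$ is contained in the commutant of $\xind L$, and the latter is trivial because $\X$ is a $\cs(G(u)^G,\sigma^*\B)$--$\cs(G(u),\B)$ imprimitivity bimodule and Rieffel induction preserves irreducibility \cite{rw:morita}*{Corollary 3.32}. Your opening reduction (triviality of the commutant suffices) and your parenthetical observation about $\cs(G\restr{[u]},\B)$ are both correct and consistent with this.

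The gap in your route is the final step, and it is not mere bundle bookkeeping. The covariance relation you extract holds only for $\nu$-almost every $x\in G$, where $\nu=\int\lambda^v\,d\mu(v)$ and $\mu$ is carried by the orbit $[u]$. Whenever that orbit is uncountable, the singleton $\{u\}$ is $\mu$-null and the stability group $G(u)\subset G^u$ is $\lambda^u$-null, hence $\nu$-null. Consequently the field $(T_v)$ has no well-defined value $T_u$, and the almost-everywhere covariance relation says literally nothing about commutation with $\hat\pi(b)$ for $b\in B(x)$ with $x\in G(u)$. Ergodicity cannot rescue this: it yields constancy of the field along the orbit, not commutation with $L$. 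In Mackey's classical argument the $H$-commutation is recovered not from an a.e.-in-$H$ relation but from the everywhere-in-$H$ coset-consistency condition built into the covariant-function realization of the induced space (the relation $S_{gh}=L(h)^{-1}S_gL(h)$), combined with a.e.\ constancy in the $G$-direction. Your sketch contains no analogue of that consistency relation, and producing one for a Fell bundle --- where the fibres over $G_u/G(u)$ are identified via $B(y)$-induction, i.e.\ by Morita equivalences as in Lemma~\ref{lem-key-induced} rather than by unitaries $L(h)$ --- is a substantial additional argument, essentially of the same weight as the theorem itself. Some device for reaching the $\nu$-null set $G(u)$ by limits rather than by evaluation is indispensable; the paper's approximation lemma is exactly that device.
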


Our proof of Theorem~\ref{thm:indirr} follows that of
\cite{ionwil:pams08}*{Theorem~5} very closely.  Because the
notation for the convolution of sections, inner products and actions
is virtually identical to the scalar-valued case, there are parts
where the proof can be used \emph{mutatis mutandis} from
\cite{ionwil:pams08}.  While this is one of the benefits of the Fell bundle
formalism, Theorem~\ref{thm:indirr} is a highly nontrivial generalization of
the scalar version, and we have tried to be careful below to point out
the places where we have had to adjust from working with scalar-valued
functions to sections of a nontrivial Banach bundle.  At the same
time, it seemed prudent to retain enough of the original argument from
the scalar case that the exposition remains readable.

\subsection{\boldmath Induced Representations of Fell Bundle \cs-Algebras}
\label{sec:induc-repr-fell}

We begin by recalling the construction of induced representations for
Fell bundles over groupoids from \cite{simwil:nyjm13}*{\S4.1}. Let
$q:\B\to G$ be a separable Fell bundle and assume that $H$ is a closed
subgroupoid of $G$. Let $q_H:\B|_H\to H$ be the Fell bundle obtained
by restriction to $H$. Then $G_{H^{(0)}}=s^{-1}(H^{(0)})$ is an
$(H^G,H)$-equivalence, where $H^G$ is the imprimitivity groupoid
$(G_{H^{(0)}}\ast_sG_{H^{(0)}})/H$. If $\sigma:H^G\to G$ is the
continuous map given by $\sigma([x,y])=xy^{-1}$, the pull-back Fell
bundle $\sigma^*q:\sigma^*\B\to H^G$ is the Fell bundle
$\sigma^*\B=\set{([x,y],b)\,:\,[x,y]\in H^G,b\in
  \B,\sigma([x,y])=q(b)}$ with bundle map
$\sigma^*q([x,y],b)=[x,y]$. Then $\E=q^{-1}(G_{H^{(0)}})$ is a
$\sigma^*\B-\B|_H$-equivalence with the left action of $\sigma^*\B$
given by $([x,y],b)\cdot e=be$ if $q(e)=yh$, the right action of
$\B|_H$ given by $e\cdot b=eb$, and the left and right inner products
on $\E\ast_s\E$ given by
\[
\lip{\sigma^*\B}<e,f>=([q(e),q(f)],ef^*)\quad\text{and}\quad 
\rip{\B|_H}<e,f>=e^*f.
\]
Therefore $\sa_{c}(G_{H^{(0)}},\E)$ is a pre-imprimitivity bimodule
with actions and inner products determined by
\begin{align*}
  F\cdot \varphi(z)&=\int_GF([z,y])\varphi(y)\,d\lambda_{s(z)}(y),\\
  \varphi\cdot g(z)&=\int_H\varphi(zh)g(h^{-1})\,d\alpha^{s(z)}(h),\\
  \Rip<\varphi,\psi>(h)&=\int_G\varphi(y)^*\psi(yh)\,d\lambda_{r(h)}(y),\\
  \Lip<\varphi,\psi>([x,y])&=\int_H
  \varphi(xh)\psi(yh)^*\,d\alpha^{s(x)}(h).
\end{align*}
The completion $X=X_H^G$ is a
$\cs(H^G,\sigma^*\B)-\cs(H,\B|_H)$-imprimitivity bimodule.

If $L$ is a representation of $\cs(H,\B|_H)$, then we write $\xind{L}$
for the representation of $\cs(H^G,\sigma^*{\B})$ induced via
$X$. Recall (see, for example, \cite{rw:morita}*{Proposition 2.66})
that $\xind{L}$ acts on the completion $\HH_{\Ind{L}}$ of $X\odot
\HH_L$ with respect to
\[
\bip(\varphi\tensor h|\psi\tensor
k)=\bip(L(\Rip<\psi,\varphi>)h|k)_{\HH_L}
\]
via
\[
(\xind{L})(F)(\varphi\tensor h)=F\cdot \varphi\tensor h.
\]
The induced representation of $\cs(G;\B)$ acts on $\HH_{\Ind{L}}$ by
\[
(\indgh L)(f)(\varphi\tensor h)=f\ast \varphi\tensor h,
\]
where $f\ast
\varphi(z)=\int_Gf(y)\varphi(y^{-1}z)\,d\lambda^{r(z)}(y)$ for $f\in
\Gamma_c(G,\B)$ and $\varphi\in \Gamma_c(G_{H^{(0)}},\E)$.

\subsection{The Proof of Theorem~\ref{thm:indirr}}
\label{sec:proof-theorem}

For the proof, we consider the case $H=G(u)$ for
some $u\in\go$. Then $H^{(0)}=\sset{u}$ and $G_{H^{(0)}}=G_u$.

Let $L$ be an irreducible
representation of $\cs(G(u),\B)$. Since $\X$ is a
$\cs(G(u)^G,\sigma^*\B)-\cs(G(u),\B|_{G(u)})$-imprimitivity bimodule,
\cite{rw:morita}*{Corollary 3.32} implies that $\xind L$ is an
irreducible representation of $\cs(G(u)^G,\sigma^*\B)$. To prove the
theorem, we just need to see that any 
$T$ in the commutant of $\Ind_{G(u)}^GL$ is a scalar multiple
of the identity. It will suffice to see that any such $T$ commutes
with $(\xind{L}) (F)$ for all $F\in
\Gamma_c(G(u)^G,\sigma^*\B)$. Hence, given such an $F$, we need to 
produce a net
$\sset{f_i}$ in $\Gamma_c(G,\B)$ such that
\[
(\Ind_{G(u)}^{G}L)(f_i)\to (\xind{L})(F)
\]
in the weak operator topology. We will arrange that this net is
uniformly bounded in the $\Vert \cdot\Vert_I$-norm on $\Gamma_c(G,\B)$
--- so that the net $\sset{(\Ind_{G(u)}^G{L})(f_i)}$ is uniformly
bounded in $B(\HH_{\Ind{L}})$. Then we just have to arrange that
\[
\bip((\Ind_{G(u)}^G{L})(f_i)(\varphi\tensor h)|\psi\tensor
k)\to\bip((\xind{L})(F)(\varphi\tensor h)|\psi\tensor k)
\]
for all $\varphi,\psi\in \Gamma_c(G_u,\E)$ and $h,k\in \HH_{\Ind{L}}$.

As in the proof of \cite{ionwil:pams08}*{Theorem 5}, the following
lemma is the essential ingredient in our proof.

\begin{lemma}\label{lemma6}
  Suppose that $F\in \Gamma_c(G(u)^G,\sigma^*\B)$. Then there is a
  compact set $C_F$ in $G$ such that for each compact set $K\subset
  G_u$ there is an $f_K\in \Gamma_c(G,\B)$ such that
  \begin{enumerate}
  \item $f_K(zy^{-1})=F([z,y])$ for all $(z,y)\in K\times K$,
  \item $\supp f_K\subset C_F$ and
  \item $\Vert f_K\Vert_I\le \Vert F\Vert_I+1$.
  \end{enumerate}
\end{lemma}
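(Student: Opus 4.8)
The plan is to build $f_K$ by transporting $F$ from the imprimitivity groupoid $G(u)^G$ to $G$ via the map $\sigma([z,y]) = zy^{-1}$, and then cutting down with a bump function to control the support and the $I$-norm. First I would observe that $\sigma$ restricted to the image of $K \times K$ under the quotient map $G_u \ast_s G_u \to G(u)^G$ is a continuous map into $G$; since $K$ is compact, $\sigma$ of the relevant piece lands in a fixed compact set, and more importantly $F$ itself has compact support $D \subset G(u)^G$, so $\sigma(D)$ is a fixed compact subset of $G$ that does not depend on $K$ — this will essentially be $C_F$ (enlarged slightly to a compact neighborhood so we have room for the bump function). The subtlety, absent in the scalar case of \cite{ionwil:pams08}, is that $F$ takes values in the pull-back bundle $\sigma^*\B$, i.e.\ $F([z,y]) \in B(\sigma([z,y])) = B(zy^{-1})$; so ``$f_K(zy^{-1}) = F([z,y])$'' is at least fibrewise meaningful, but we must check this prescription is \emph{consistent}: if $zy^{-1} = z'y'^{-1}$ with $[z,y], [z',y']$ both in the relevant set, the values must agree. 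This holds precisely because $[z,y]$ and $[z',y']$ then differ by the $G(u)$-action ($z' = zh$, $y' = yh$ for some $h \in G(u)$) and $\sigma$ is $G(u)$-invariant by construction, so $[z,y] = [z',y']$ and there is nothing to reconcile.

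Next I would handle \emph{existence} of a global continuous section extending this partial prescription. The set $W = \set{zy^{-1} : (z,y) \in K \times K}$ is a compact subset of $G$, and on it we have a well-defined continuous $B$-valued function coming from $F$ (continuity follows from continuity of $F$, of the quotient map, and of $\sigma^*q$). Using that $p : \B \to G$ is an upper semicontinuous Banach bundle with enough continuous sections (Douady--dal Soglio-Hérault / the standard partition-of-unity argument for such bundles), one extends this function on the compact set $W$ to a continuous compactly supported section $g \in \Gamma_c(G,\B)$; then multiply by a function $\chi \in C_c(G)$ with $\chi \equiv 1$ on $W$ and $\supp \chi \subset C_F$, where $C_F$ is a fixed compact neighborhood of $\sigma(D)$. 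Setting $f_K = \chi \cdot g$ gives (a) and (b) immediately. Note $C_F$ depends only on $F$, as required, since $W \subset \sigma(D)$ always.

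The real work is (c), the $I$-norm bound $\Vert f_K \Vert_I \le \Vert F \Vert_I + 1$, and this is where I expect the main obstacle. Recall $\Vert f_K \Vert_I = \max\bigl(\sup_{u} \int \Vert f_K(x)\Vert\, d\lambda^{u}(x),\ \sup_{u}\int \Vert f_K(x)\Vert\, d\lambda_{u}(x)\bigr)$, and similarly $\Vert F \Vert_I$ is computed with the Haar system $\alpha$ on $H^G = G(u)^G$ pushed through; the point is that the change of variables $[z,y] \mapsto zy^{-1}$ is measure-preserving in the appropriate sense (this is exactly the computation underlying why $\sigma^*\B$ is set up the way it is, and it is the Fell-bundle analogue of the scalar estimate in \cite{ionwil:pams08}*{Theorem 5}), \emph{but} on the complement of $W$ the section $g$ contributes extra mass. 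The trick, following the scalar argument, is that we get to choose $g$: by a careful extension we can demand $\Vert g(x) \Vert$ be no larger than $\sup_{[z,y]} \Vert F([z,y])\Vert$ plus an arbitrarily small controlled overshoot, and we can take the support of $\chi g$ inside a set whose $\lambda$-measure (and $\lambda$-measure for the other variable) exceeds that of $W$ by as little as we like — shrinking $C_F$'s ``collar'' around $\sigma(D)$. Quantifying this so the total integral exceeds $\Vert F \Vert_I$ by at most $1$ is the delicate step: one fixes $C_F$ first (giving a uniform bound on the measure of its slices), and then, given $K$, chooses the collar thin enough and the overshoot of $g$ small enough — both depending on $K$ through $W$, which is allowed since $f_K$ is permitted to depend on $K$. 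I would also note that the norm function $x \mapsto \Vert f_K(x)\Vert$ is only upper semicontinuous in general, so all integrals should be interpreted accordingly and the extension lemma for upper semicontinuous bundles invoked in that form; this is precisely the kind of ``extra care'' the introduction warns about, and it is the only place the nontriviality of $\B$ genuinely bites.
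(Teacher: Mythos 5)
Your overall strategy coincides with the paper's: push $F$ over to $G$ via the injective map $\sigma$, extend by the vector-valued Tietze theorem to a section supported in a fixed compact neighborhood $C_F$ of $\sigma(\supp F)$, and then cut down near $W=K_G:=\sigma(\pi(K\times K))$ to control the $I$-norm. Parts (a) and (b) are fine, modulo the minor slip that $W$ need \emph{not} be contained in $\sigma(\supp F)$; what saves (b) is that $F$, and hence the prescribed values of $f_K$, vanish on the part of $W$ outside $C_F$, so there is no conflict between (a) and (b).

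The gap is in (c), exactly where you flag the delicate step. Your plan bounds the extra mass by $(\sup\Vert F\Vert)\cdot\lambda^w(V\setminus W)$ and then shrinks the collar $V\supset W$. But this bound must hold \emph{uniformly over all $w\in\go$}, and for a Haar system $\set{\lambda^w}$ over a (possibly noncompact) unit space it is not automatic that $\sup_w\lambda^w(V\setminus W)\to0$ as $V$ shrinks to $W$: outer regularity gives this for each fixed $w$ but nothing uniform. This uniformity is precisely the content of the paper's Lemma~\ref{lemma7}, stated for nonnegative upper semicontinuous functions (note that $x\mapsto\Vert\tilde f_K(x)\Vert$ is only u.s.c.\ for an upper semicontinuous bundle, so one cannot work with indicator functions or continuous functions directly), and it is proved there by a genuine compactness-and-contradiction argument exploiting the continuity of $w\mapsto\int f_0\,d\lambda^w$ for $f_0\in C_c(G)$; you assert the conclusion without supplying such an argument. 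A second, smaller omission: your reduction presupposes that the integrals of $\Vert\tilde f_K\Vert$ over $W$ itself (against both $\lambda^w$ and $\lambda_w$) are already bounded by $\Vert F\Vert_I$. ``The change of variables is measure-preserving in the appropriate sense'' is the right intuition, but the actual estimate is a concrete computation: if the integral over $K_G$ against $\lambda^w$ is nonzero, pick $z\in K$ with $r(z)=w$ and use left invariance to rewrite it as $\int_G\charfcn{K_G}(zy^{-1})\Vert F([z,y])\Vert\,d\lambda_u(y)\le\Vert F\Vert_I$, and similarly for the other variable. With Lemma~\ref{lemma7} and this computation supplied, your cut-down step (whether by a second Tietze extension, as in the paper, or by your bump function $\chi$, which works equally well since $0\le\chi\le1$ gives $\Vert f_K(x)\Vert\le\Vert\tilde f_K(x)\Vert$ pointwise) completes the proof.
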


In order to prove lemma \ref{lemma6}, we need a version of
\cite{ionwil:pams08}*{Lemma 7} for semicontinuous functions.
\begin{lemma}\label{lemma7}
  Suppose that $f$ is a non-negative upper semicontinuous function on
  $G$ with compact support and that $K\subset G$ is a compact set such
  that
  \[
  \int_{K}f(x)\,d\lambda^{u}(x)\le M\quad\text{for all $u\in G^{(0)}$.}
  \]
  Then there is a neighborhood $V$ of $K$ such that
  \[
  \int_{V}f(x)\,d\lambda^{u}(x)\le M+1\quad\text{for all $u\in G^{(0)}$.}
  \]
\end{lemma}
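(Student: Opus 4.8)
The plan is to exploit the fact that the Haar system gives us continuity of $u\mapsto\int_G\varphi\,d\lambda^u$ for continuous compactly supported $\varphi$, and to dominate the upper semicontinuous function $f$ by such a continuous function from above on a neighborhood of $K$. More precisely, first I would fix a compact \nbhd{} $W$ of $\supp f$. Since $f$ is upper semicontinuous and non-negative, for any $\epsilon>0$ the standard regularization argument (or an appeal to the fact that u.s.c.\ functions on a locally compact Hausdorff space are decreasing limits of continuous ones) produces a function $g\in C_c(G)$ with $\supp g\subset W$, $g\ge f$ pointwise, and $\int_G g\,d\lambda^u\le \int_G f\,d\lambda^u + \tfrac12$ for all $u$; the last inequality can be arranged because the functions $u\mapsto\int_G g\,d\lambda^u$ are continuous, $W$ meets only finitely many... — more carefully, we use that $r(\supp g)$ is compact and that we may take $g\downarrow f$ so that by a Dini-type argument the integrals converge uniformly on the compact set $r(W)$. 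This is the step I expect to be the main obstacle: producing a single continuous dominating $g$ whose Haar integrals uniformly approximate those of $f$, rather than doing it pointwise in $u$.

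Granting such a $g$, the rest is a partition-of-unity argument. Choose $h\in C_c(G)$ with $0\le h\le 1$ and $h\equiv 1$ on $K$. Then $u\mapsto\int_G h(x)g(x)\,d\lambda^u(x)$ is continuous, and at every $u\in G^{(0)}$ its value is at most $\int_K g\,d\lambda^u \le \int_K f\,d\lambda^u + \tfrac12 \le M+\tfrac12$ — wait, this requires $hg$ to be supported near $K$; instead one argues directly: $\set{x:h(x)>1-\delta}$ is an open \nbhd{} of $K$ for each $\delta$, and we want to compare $\int_{\{h>1-\delta\}}g$ with $\int_K g$. Since $\int_{\{h>0\}\setminus K} g\,d\lambda^u$ need not be small, the cleaner route is: the function $u\mapsto\int_G(1-h(x))g(x)\,d\lambda^u(x)$ is continuous and vanishes nowhere useful... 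Let me instead use the direct estimate. Set $\Phi(u)=\int_{\{h=1\}^{\circ}} g\,d\lambda^u$ is not continuous, so I would rather observe that $u\mapsto\int_G h g\,d\lambda^u$ is continuous and bounded above by $u\mapsto\int_{\supp h} g\,d\lambda^u$, while $hg\ge hf$ and on $K$ we have $hf=f$; so one does not get the bound on a \emph{neighborhood} for free.

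The correct clean argument is as follows. Since $u\mapsto\int_G g\,d\lambda^u$ is continuous, the set $N:=\set{u\in G^{(0)}:\int_G g\,d\lambda^u< M+1}$ need not contain $r(K)$. Instead, work fibrewise with a \nbhd{} base: for each $x\in K$, pick an open \nbhd{} $U_x$ of $x$ with compact closure contained in $W$; we want $\int_{\bigcup U_x} g\,d\lambda^u$ close to $\int_K g\,d\lambda^u$ on the relevant fibres, which holds because $g$ is continuous (hence its Haar integral over an open set is lower semicontinuous in the set) and $K$ is compact. Concretely: because $g\ge f$ and $\int_K f\,d\lambda^u\le M$, it suffices to find an open $V\supset K$ with $\int_V g\,d\lambda^u\le M+1$; since $g$ is continuous and compactly supported, the map $u\mapsto\int_G g\,d\lambda^u$ is continuous, and $\int_G(g-\charfcn{V}g)\,d\lambda^u=\int_{G\setminus V}g\,d\lambda^u$ can be made uniformly small by shrinking the complement — but $G\setminus V$ can be taken to be a fixed open set disjoint from $K$, and one uses $\int_{\supp g\setminus V} g\,d\lambda^u\le \int_{\supp g} g\,d\lambda^u<\infty$ uniformly. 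I would finish by choosing $V$ so that $\supp g\setminus V$ is small enough (via inner regularity of each $\lambda^u$ uniformly over the compact fibre-base $r(\supp g)$) that $\int_{\supp g\setminus V} g\,d\lambda^u\le\tfrac12$ for all $u$; combined with $\int_V g\le\int_K g + \tfrac12$, this is not quite automatic either. In short, the genuinely delicate point — and the one deserving the most care in the write-up — is the \emph{uniformity in $u\in G^{(0)}$} of the approximation, which is exactly where second countability and properties of the Haar system enter; once that uniformity is in hand, the combination $\int_V f\le\int_V g\le M+1$ completes the proof.
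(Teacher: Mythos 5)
Your proposal does not close the step that you yourself flag as the crux, and that step is not merely delicate --- as you formulated it, it is false in general. You want a single $g\in C_c(G)$ with $g\ge f$ and $\int_G g\,d\lambda^u\le\int_G f\,d\lambda^u+\tfrac12$ for \emph{all} $u\in\go$ (and, later, a neighborhood $V$ of $K$ with $\int_{V}g\,d\lambda^u\le\int_K g\,d\lambda^u+\tfrac12$ uniformly in $u$, which you also leave unproved). The Dini-type argument you invoke does not apply: writing $f$ as a decreasing limit of $g_n\in C_c(G)$ makes $u\mapsto\int_G g_n\,d\lambda^u$ a decreasing sequence of continuous functions whose pointwise limit $u\mapsto\int_G f\,d\lambda^u$ is only upper semicontinuous, and Dini requires a continuous limit. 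In fact uniform domination can be impossible: take the group bundle $G=[0,1]\times\R$ over $\go=[0,1]$ with Lebesgue measure on each fibre as Haar system, and let $f$ be the indicator of the compact set $(\{0\}\times[0,2])\cup([0,1]\times\{0\})$. Then $\int_G f\,d\lambda^0=2$ while $\int_G f\,d\lambda^u=0$ for $u>0$, yet any continuous $g\ge f$ satisfies $\int_G g\,d\lambda^u\ge 2(1-\varepsilon)$ for all small $u>0$, so no continuous majorant approximates the $f$-integrals uniformly in $u$. (Note the lemma itself holds trivially in this example, which shows your route demands strictly more than the statement.)

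The paper's proof avoids uniformity altogether by localizing at a single unit. It argues by contradiction: fix a decreasing countable neighborhood basis $\{V_n\}$ of $K$ inside a compact neighborhood and suppose there are units $u_n$ with $\int_{V_n}f\,d\lambda^{u_n}>M+1$; pass to a limit $u_n\to u_0$. Dominated convergence gives $\int_{V_{n_1}}f\,d\lambda^{u_0}\le M+\tfrac12$ for some $n_1$, and then \cite{muhwil:dm08}*{Lemma 3.4} produces a continuous $g\ge f$ on $\overline{W}_1\subset V_{n_1}$ whose integral is controlled \emph{only at $u_0$}. A continuous $f_0$ agreeing with $g$ on a slightly smaller closed neighborhood and supported in $W_1$ then has $\int_G f_0\,d\lambda^{u_0}<M+1$, and the continuity of $u\mapsto\int_G f_0\,d\lambda^u$ --- the Haar-system axiom, applied to a genuinely continuous function --- transports this bound to $u_n$ for large $n$, contradicting $\int_{V_n}f\,d\lambda^{u_n}>M+1$ since $f\le f_0$ on $V_n$ eventually. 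This localization (approximate at the limit unit only, and let the continuity of the Haar system on $C_c(G)$ supply the needed uniformity) is the idea missing from your argument.
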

\begin{proof}
  Let $K_{1}$ a compact neighborhood of $K$ and $\{V_{n}\}$ a
  countable fundamental system of neighborhoods of $K$ in $K_{1}$ such
  that $V_{n+1}\subset V_{n}$.  Assuming to the contrary that no $V$
  as prescribed in the lemma exists, then we can find a sequence
  $\{u_{n}\}\subset G^{(0)}$ such that
  \[
  \int_{V_{n}}f(x)\,d\lambda^{u_{n}}(x)>M+1.
  \]
  As in \cite{ionwil:pams08}, we can assume that
  $u_{n}\to u_{0}$. The dominated convergence theorem implies
  that
  \[
  \int_{V_{n}}f(x)\,d\lambda^{u_{0}}(x)\to\int_{K}f(x)\,d\lambda^{u_{0}}(x).
  \]
  In particular, there is an $n_{1}$ such that
  \[
  \int_{V_{n_{1}}}f(x)\,d\lambda^{u_{0}}(x)\le M+\frac{1}{2}.
  \]
  Let $W_{1}$ be an open set such that $K\subset
  W_{1}\subset\overline{W}_{1}\subset V_{n_{1}}$.  Then
  $1_{\overline{W}_{1}}f$ is upper semicontinuous and
  \[
  \int_{\overline{W}_{1}}f(x)\,d\lambda^{u_{0}}\le M+\frac{1}{2}.
  \]
  Let $0<\varepsilon<\frac{1}{2}$. Using \cite{muhwil:dm08}*{Lemma
    3.4} we can find $g\in C_{c}^{+}(G)$ such that $f(x)\le g(x)$ for
  all $x\in\overline{W}_{1}$ and
  \[
  \int_{\overline{W}_{1}}f(x)\,d\lambda^{u_{0}}(x)
  \le\int_{G}g(x)\,d\lambda^{u_{0}}(x)<M+\frac{1}{2}+\varepsilon.  
  \]
  Let $W$ be open such that $K\subset W\subset\overline{W}\subset
  W_{1}$ and let $f_{0}\in C_{c}^{+}(G)$ be such that
  $f_{0}|_{\overline{W}}=g$, $f_{0}\le g$, and
  $\operatorname{supp}f_{0}\subset W_{1}$. Then
  \[
  \int_{G}f_{0}(x)\,d\lambda^{u_{0}}(x)<M+\frac{1}{2}+\varepsilon.
  \]
  However, since $\{\lambda^{u}\}$ is a Haar system,
  \[
  \int_{G}f_{0}(x)\,d\lambda^{u_{m}}(x)\to\int_{G}f_{0}(x)\,d\lambda^{u_{0}}(x).
  \]
  Therefore, for large $n$, we have that
  $\int_{G}f_{0}(x)\,d\lambda^{u_{m}}(x)<M+1$. However, for large $n$,
  we have $V_{n}\subset W$ and therefore
  \begin{align*}
    \int_{G}f_{0}(x)\,d\lambda^{u_{n}}(x) &
    \ge\int_{V_{n}}f_{0}(x)\,d\lambda^{u_{n}}(x)=\int_{V_{n}}g(x)\,d\lambda^{u_{n}}(x)\\ 
    & \ge\int_{V_{n}}f(x)\,d\lambda^{u_{n}}(x)>M+1.
  \end{align*}
  This leads to a contradiction and the proof is complete.
\end{proof}
\begin{proof}[Proof of Lemma \ref{lemma6}]

  The map $(z,y)\mapsto zy^{-1}$ is continuous on $G_u\times G_u$ and
  factors through the orbit map $\pi:G_u\times G_u\to
  G(u)^G$. Moreover, the continuous map $\sigma:G(u)^G\to G$ defined
  via $\sigma([z,y])=zy^{-1}$ is injective. We let $C_F$ be a compact
  neighborhood of $\sigma(\supp F)$.

  Fix a compact set $K\subset G_u$. Then the restriction of $\sigma$
  to the compact set $\pi(K\times K)$ is a homeomorphism. Using the
  vector-valued Tietze Extension Theorem
  (\cite{muhwil:dm08}*{Proposition A.5}) we can find a section
  $\tilde{f}_K\in \Gamma_c(G,\B)$ such that $\supp \tilde{f}_K\subset
  C_F$ and such that $\tilde{f}_K(zy^{-1})=F([z,y])$ for all $(z,y)\in
  K\times K$.

  Let $K_G:=\sigma\bigl(\pi(K\times K)\bigr)\subset G$. If
  \[
  \int_{K_G}\Vert \tilde{f}_K(y)\Vert\,d\lambda^w(y)\ne 0,
  \]
  then $K_G\bigcap G^w\ne \emptyset$.  Therefore there is $z\in K$
  such that $r(z)=w$. Then by left invariance
  \begin{align*}
    \int_{K_G}\Vert \tilde{f}_K(y)\Vert \,d\lambda^w(y)&=\int_G
    \charfcn{K_G}(zy)\Vert \tilde{f}_K(zy)\Vert \,d\lambda^w(y)\\
    &=\int_G\charfcn{K_G}(zy^{-1})\Vert \tilde{f}_K(zy^{-1})\Vert
    \,d\lambda_u(y)\\
    &=\int_G\charfcn{K_G}(zy^{-1})\Vert F([z,y])\Vert \,d\lambda_u(y)\\
    &\le \Vert F\Vert_I.
  \end{align*}
  Similarly, if
  \[
  \int_{K_G}\Vert \tilde{f}_K(y^{-1})\Vert \,d\lambda^w(y)\ne 0,
  \]
  then as before there is a $z\in K$ such that $r(z)=w$ and
  \begin{align*}
    \int_{K_G}\Vert \tilde{f}_K(y^{-1})\Vert \,d\lambda^w(y)&=
    \int_G\charfcn{K_G}(zy)\Vert \tilde{f}_K(y^{-1}z^{-1})\Vert
    \,d\lambda^w(y)\\
    &=\int_G\charfcn{K_G}(zy^{-1})\Vert \tilde{f}_K(yz^{-1})\Vert
    \,d\lambda_u(y)\\
    \intertext{which, since $K_{G}^{-1}=K_{G}$, is} &= \int_G
    \charfcn{K_G}(yz^{-1})\Vert \tilde{f}_K(yz^{-1})\Vert
    \,d\lambda_u(y)\\
    &\le \int_G\Vert F([y,z])\Vert \,d\lambda_u(y)\le \Vert F\Vert_I.
  \end{align*}
  Using Lemma \ref{lemma7}, we can find a neighborhood $V$ of $K_G$
  contained in $C_F$ such that both
  \[
  \int_V\Vert \tilde{f}_K(y)\Vert
  \,d\lambda^w(y)\quad\text{and}\quad\int_{V}\Vert
  \tilde{f}_K(y^{-1})\Vert \,d\lambda^w(y)
  \]
  are bounded by $\Vert F\Vert_I+1$ for all $w\in \go$. Since $K_G$ is
  symmetric we can assume that $V=V^{-1}$ as well. Using 
  the vector-valued Tietze extension theorem, we let $f_K$ be any element
  of $\Gamma_c(G,\B)$ such that $f_K=\tilde{f}_K$ on $K_G$, $\supp
  f_K\subset V$ and $\Vert f_K(x)\Vert \le \Vert \tilde{f}_K(x)\Vert$
  everywhere. Then $f_K$ satisfies the conclusion of the Lemma.
\end{proof}
\begin{proof}[Proof of Theorem \ref{thm:indirr}]
  For each $K\subset G_{u}$, let $f_{K}$ be as in Lemma~\ref{lemma7}.
  Then $\set{f_{K}}$ and $\set{(\indgug L)(f_{K})}$ are nets indexed
  by increasing $K$. Notice that
  \begin{multline}\label{eq:2_1}
    \bip((\indgug L)(f_{K})(\phi\tensor_{G(u)}h) |
    \psi\tensor_{G(u)}k) -{}\\ \bip((\xind L)(F) (\phi\tensor_{G(u)}h)
    |
    \psi\tensor_{G(u)}k) \\
    = \bip( L\bigl(\Rip<\psi,f_{K}*\phi-F\cdot \phi>\bigr)h|k)
  \end{multline}
  Furthermore, using the invariance of the Haar system on $G$, we can
  compute as follows:
  \begin{equation}
    \label{eq:2_2}
    \begin{split}
      \Rip<\psi,f_{K}*\phi>(s)&= \int_{G}{\psi(x)}^*
      f_{K}*\phi(xs) \, d\lambda_{u}(x) \\
      &= \int_{G}\int_{G} {\psi(x)}^* f_{K}(xz^{-1})\phi(zs)
      \,d\lambda_{u}(z) \,d\lambda_{u}(x),
    \end{split}
  \end{equation}
  while on the other hand,
  \begin{equation}
    \label{eq:2_3}
    \begin{split}
      \Rip<\psi,F\cdot \phi>(s)&= \int_{G}{\psi(x)}^* F\cdot
      \phi(xs) \,d\lambda_{u}(x) \\
      &=\int_{G}\int_{G} {\psi(x)}^* F\bigl([xs,z]\bigr) \phi(z)
      \,d\lambda_{u}(z) \,d\lambda_{u}(x) \\
      &= \int_{G}\int_{G} {\psi(x)}^* F\bigl([x,zs^{-1}]\bigr) \phi(z)
      \,d\lambda_{u}(z) \,d\lambda_{u}(x) \\
      &= \int_{G}\int_{G} {\psi(x)}^* F\bigl([x,z]\bigr) \phi(zs)
      \,d\lambda_{u}(z) \,d\lambda_{u}(x).
    \end{split}
  \end{equation}

  Notice that $\supp\Rip<\psi,\phi>\subset (\supp \psi)(\supp\phi)$.
  Since $\supp f_{K}\subset C_{F}$ for all $K$, we have
  \begin{equation*}
    \supp f_{K}*\phi\subset (\supp f_{K})(\supp \phi)\subset
    C_{F}(\supp\phi).
  \end{equation*}
  Therefore if \eqref{eq:2_2} does not vanish, then we must have $s\in
  (\supp\psi)C_{F}(\supp \phi)$.  Therefore there is a compact set
  $K_{0}$ --- which does \emph{not} depend on $K$ --- such that both
  \eqref{eq:2_2} and \eqref{eq:2_3} vanish if $s\notin K_{0}$.  Thus
  if $s\in K_{0}$ and if $K \supset (\supp\psi)\cup (\supp
  \phi)K_{0}^{-1}$, then the integrand in \eqref{eq:2_2} and
  \eqref{eq:2_3} are both zero or we must have $(x,z)\in K\times K$.
  Therefore we can replace $f_{K}(xz^{-1})$ by $F\bigl([x,z]\bigr)$,
  and then $f_{K}*\phi-F\cdot\phi$ is the zero section whenever $K$
  contains $(\supp\psi)\cup (\supp \phi)K_{0}^{-1}$.  Therefore the
  left-hand side of \eqref{eq:2_1} is eventually zero, and the theorem
  follows.
\end{proof}

The proof of the following theorem is similar to that
of \cite{ionwil:pams08}*{Theorem 4} and we will omit it. The result will
be useful in Section~\ref{sec:group-case}.

\begin{thm}[Induction in stages]\label{thm:indstages} Suppose that
  $q:\B\to G$ is a separable Fell bundle over a second countable
  locally compact 
  Hausdorff groupoid $G$ and that $H$ and $K$ are closed subgroupoids
  of $G$ with $H\subset K$. Assume that $H$, $K$, and $G$ have Haar
  systems. If $L$ is a representation of $\cs(H,\B)$, then
  \[
  \Ind_H^G L\;\text{and}\;\Ind_K^G\bigl(\Ind_H^K L\bigr)
  \]
  are equivalent representations of $\cs(G,\B)$.
\end{thm}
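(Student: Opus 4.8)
The plan is to realize each of the three induction operations as induction along a right-Hilbert bimodule --- that is, a right Hilbert module on which the other algebra acts by adjointable operators --- and then to appeal to the associativity of the internal tensor product; this is the Fell-bundle counterpart of the argument proving \cite{ionwil:pams08}*{Theorem~4}. The first step is to recast the construction recalled in \S\ref{sec:induc-repr-fell}. For a closed subgroupoid $H\subseteq G$ with a Haar system, let $\mathsf{Z}_H^G$ be the completion of $\Gamma_c\bigl(G_{H^{(0)}},\B|_{G_{H^{(0)}}}\bigr)$ carrying the right $\cs(H,\B|_H)$-action $\varphi\cdot g$ and the $\cs(H,\B|_H)$-valued inner product recalled there, made into a right-Hilbert $\cs(G,\B)-\cs(H,\B|_H)$-bimodule by letting $\cs(G,\B)$ act through convolution $f\ast\varphi$. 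That this action is by adjointable operators, and that $\Ind_H^G L$ is exactly induction of $L$ along $\mathsf{Z}_H^G$, is part of the construction imported from \cite{simwil:nyjm13}*{\S4.1}; moreover $\mathsf{Z}_H^G$ is full as a right $\cs(H,\B|_H)$-module, since it carries the right-Hilbert-module structure of the imprimitivity bimodule $X_H^G$. Applying this to $H$, to $K$, and to the pair $H\subseteq K$ (with $K$ in place of $G$) yields right-Hilbert bimodules $\mathsf{Z}_H^G$ (a $\cs(G,\B)-\cs(H,\B|_H)$-bimodule), $\mathsf{Z}_K^G$ (a $\cs(G,\B)-\cs(K,\B|_K)$-bimodule), and $\mathsf{Z}_H^K$ (a $\cs(K,\B|_K)-\cs(H,\B|_H)$-bimodule), the last being the completion of $\Gamma_c(K_{H^{(0)}},\B)$ with $K_{H^{(0)}}:=\set{k\in K:s(k)\in H^{(0)}}$.

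The heart of the proof is to produce an isomorphism of right-Hilbert $\cs(G,\B)-\cs(H,\B|_H)$-bimodules
\[
\mathsf{Z}_K^G\tensor_{\cs(K,\B|_K)}\mathsf{Z}_H^K\;\cong\;\mathsf{Z}_H^G .
\]
On elementary tensors of compactly supported sections, define $\Phi(\xi\tensor\eta)=\xi\odot\eta$, where for $\xi\in\Gamma_c(G_{K^{(0)}},\B)$ and $\eta\in\Gamma_c(K_{H^{(0)}},\B)$,
\[
(\xi\odot\eta)(z)=\int_K\xi(zk)\,\eta(k^{-1})\,d\beta^{s(z)}(k)\qquad(z\in G_{H^{(0)}}),
\]
$\beta$ being the Haar system on $K$ and $\xi(zk)\eta(k^{-1})\in B(zk)B(k^{-1})\subseteq B(z)$ the Fell-bundle product; as in the scalar case one checks that $\xi\odot\eta$ lies in $\Gamma_c(G_{H^{(0)}},\B)$, with support in the compact set $(\supp\xi)(\supp\eta)$. (Concretely, $\xi\odot\eta$ is the right $\cs(K,\B|_K)$-action of $\mathsf{Z}_K^G$ applied to $\eta$, now landing in $\mathsf{Z}_H^G$.) One then verifies, in order: (i) $\Phi$ respects the balancing relation, $(\xi\cdot g)\odot\eta=\xi\odot(g\cdot\eta)$ for $g\in\Gamma_c(K,\B|_K)$, which is associativity of convolution over $K$; (ii) $\Phi$ preserves the $\cs(H,\B|_H)$-valued inner products,
\[
\langle\xi\odot\eta,\xi'\odot\eta'\rangle_{\cs(H,\B|_H)}=\bigl\langle\eta,\langle\xi,\xi'\rangle_{\cs(K,\B|_K)}\cdot\eta'\bigr\rangle_{\cs(H,\B|_H)},
\]
a Fubini computation using left-invariance of the Haar systems and our standing conventions for the Fell-bundle involution and the placement of modular functions; (iii) $\Phi$ has dense range, by the usual approximate-identity argument together with fullness of $\mathsf{Z}_K^G$ over $\cs(K,\B|_K)$; and (iv) $\Phi$ intertwines the left $\cs(G,\B)$-actions, $(f\ast\xi)\odot\eta=f\ast(\xi\odot\eta)$, and the right $\cs(H,\B|_H)$-actions, $(\xi\odot\eta)\cdot h=\xi\odot(\eta\cdot h)$ --- again associativity of convolution. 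By (i)--(iii), $\Phi$ extends to a unitary of right Hilbert $\cs(H,\B|_H)$-modules, and by (iv) it is a bimodule isomorphism, giving the displayed identification.

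To finish, one invokes the functoriality of induction under internal tensor products: for a right-Hilbert $A-B$-bimodule $Y$, a right-Hilbert $B-C$-bimodule $W$, and a representation $L$ of $C$, the canonical map $(Y\tensor_B W)\tensor_C\HH_L\to Y\tensor_B(W\tensor_C\HH_L)$ is a unitary intertwiner of the corresponding induced representations of $A$ (induction in stages for Hilbert bimodules; see \cite{rw:morita}). Taking $Y=\mathsf{Z}_K^G$ and $W=\mathsf{Z}_H^K$ and combining with the isomorphism of the previous paragraph gives
\[
\Ind_H^G L\cong\bigl(\mathsf{Z}_K^G\tensor_{\cs(K,\B|_K)}\mathsf{Z}_H^K\bigr)\text{-}\Ind L\cong\mathsf{Z}_K^G\text{-}\Ind\bigl(\mathsf{Z}_H^K\text{-}\Ind L\bigr)\cong\Ind_K^G\bigl(\Ind_H^K L\bigr),
\]
as required.

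The step I expect to be the main obstacle is (ii), the inner-product computation. Beyond the routine use of Fubini's theorem, it requires juggling the three Haar systems on $H$, $K$, and $G$ at once and being careful about where the modular function appears --- which, under our conventions, is absent from the bimodule formulas but resurfaces once one passes to integrated forms of representations. This is exactly the bookkeeping that distinguishes the Fell-bundle argument from its scalar prototype in \cite{ionwil:pams08}; the remaining steps are essentially formal, resting on associativity of convolution and on standard facts about Hilbert modules.
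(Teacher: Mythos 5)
Your proposal is correct and takes essentially the approach the paper intends: the paper omits the proof, deferring to the scalar argument of \cite{ionwil:pams08}*{Theorem~4}, and your route --- building the bimodule isomorphism $\mathsf{Z}_K^G\tensor_{\cs(K,\B|_K)}\mathsf{Z}_H^K\cong\mathsf{Z}_H^G$ on compactly supported sections and then invoking associativity of the internal tensor product --- is precisely the Fell-bundle adaptation of that argument. The bookkeeping you flag in step (ii) is in fact milder than you fear, since under the paper's groupoid conventions the modular function does not appear in the bimodule formulas at all, only in integrated forms of representations.
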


\section{The Strong Effros-Hahn 
Induction Property for Fell Bundles}
\label{sec:red-group-case}

Recall that, for a Fell bundle $q:\B\to G$ over a groupoid $G$,
$A=\sa_{0}(\go,B)$ is a $C^*$-algebra that we call the $C^*$-algebra of
$\B$ over $\go$. Then $A$ is a $C_0(\go)$-algebra and we let
$\sigma_A:\Prim A\to\go$ be the associated structure map (see, for
example, \cite{wil:crossed}*{\S C.1}). If $u\in \go$, let $p_u:A\to
A(u)$ be the quotient map with kernel $I(u)$. Note that if $P\in \Prim
A$, then $u=\sigma_{A}(P)$ is the \emph{unique} $u\in\go$  
such that $P\supset I(u)$.  Moreover, $\Prim A$ is naturally
identified with the disjoint union of the $\Prim A(u)$
\cite{wil:crossed}*{Proposition C.5}. 

Recall that $G$ acts on $\Prim A$ via the Rieffel correspondence
$h_x:\Prim A(s(x))\to \Prim A(r(x))$ (see
\cite{ionwil:hjm11}*{\S2}).  Thus, if $P\in\Prim A$ and if $x\in G$ is
such
that $\sigma_A(P)=s(x)$, then $x\cdot P=h_x(P)$.
Since the $G$-action $h_{x}$
also maps $I(s(x))$ to $I(r(x))$ (see \cite{rw:morita}*{Proposition~3.24}), the stability
group $G_{P}$ of $P$ is a subgroup of the stability group $G(u)$,
where $u=\sigma_A(P)$.  

Suppose now that $q:\B\to G$ is a separable Fell bundle over a
\emph{group} $G$.\footnote{Notice that in this case the underlying
  Banach bundle is continuous 
\cite{bmz:pems13}*{Lemma~3.30}.}  If $L:\cs(G,\B)\to B(\WW)$ is a
representation, then there is a strictly continuous, nondegenerate
$*$-homomorphism $\pi:\B\to B(\WW)$ such that
\begin{equation}
  \label{eq:1}
  L(f) =\int_{G} \pi\bigl(f(s)\bigr) \Delta_{G}(s)^{-\half}\,d\mu_{G}(s).
\end{equation}
This is a consequence of \cite{kmqw:nyjm10}*{Lemma~1.3}.  (The
appearance of the modular
function in \eqref{eq:1} is a consequence of our convention of 
treating  $G$ as a groupoid: see
\cite{kmqw:nyjm10}*{Remark~1.5}.)  
Notice that $\pi\restr A$ is a representation of $A$ on $\WW$.
Let $I$ be an ideal in $A = B(e)$.  Then we say that $L$ or $\pi$ has
kernel $I$ if $\ker (\pi\restr A)=I$. 

In
analogy with \cite{echwil:tams08}*{Definition~1.1}, we make the
following definition.

\begin{definition}
  \label{def-ehi}
  We say that a Fell bundle $q:\B\to G$ over a groupoid $G$ with
  \cs-algebra $A$ over $\go$ satisfies
  the \emph{strong Effros-Hahn induction property} (strong-EHI) if
  given $P\in \Prim A$ and an irreducible representation $L$ of
  $\cs(G_{P},\B)$ with kernel $P$, then $\Ind_{G_P}^{G}L$ is irreducible.
\end{definition}


\section{The strong-EFI for homogeneous representations}
\label{sec:group-case}

In this section we prove our main result, which asserts that the strong-EHI
property holds under the additional assumption that the restriction of
the representation to $A$ is homogeneous.

\begin{thm}
  \label{thm-main-1.7}
  Let $q:\B\to G$ be a saturated, separable Fell bundle over a locally
  compact groupoid $G$ and suppose that $P\in \Prim A$ where
  $A=\sa_{0}(\go,\B)$ is the associated \cs-algebra over $\go$.  Suppose that
  $L$ is an irreducible representation of $\cs(G_{P},\B\restr{G_{P}})$
  which is the integrated form of $\pi:\B\restr{G_{P}}\to B(\WW)$ with
  $\pi\restr A$ homogeneous with kernel $P$.  Then $\Ind_{G_{P}}^{G}
  L$ is an irreducible representation of $\cs(G,\B)$.
\end{thm}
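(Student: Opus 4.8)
The plan is to strip the groupoid away and reduce to a Fell bundle over a \emph{group}, where the statement becomes the Fell-bundle analogue of \cite{echwil:tams08}*{Theorem~1.7}.

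Set $u:=\sigma_{A}(P)\in\go$. As recalled in Section~\ref{sec:red-group-case}, the $G$-action carries $I(s(x))$ to $I(r(x))$, so $G_{P}$ is a closed subgroup of the stability group $G(u)=\set{x\in G:r(x)=u=s(x)}$; and since $\sigma_{A}(x\cdot P)=r(x)$, any $x$ with $x\cdot P=P$ already lies in $G(u)$, so $G_{P}$ is exactly the stabilizer of $P$ for the action of the \emph{group} $G(u)$ on $\Prim A(u)$. Both $G_{P}$ and $G(u)$ are second countable locally compact groups --- hence carry Haar measures --- and are closed subgroupoids of $G$ with $G_{P}\subset G(u)$. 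Applying induction in stages (Theorem~\ref{thm:indstages}) with $H=G_{P}\subset K=G(u)$ gives
\[
\Ind_{G_{P}}^{G}L\;\cong\;\Ind_{G(u)}^{G}\bigl(\Ind_{G_{P}}^{G(u)}L\bigr),
\]
so by Theorem~\ref{thm:indirr} it suffices to prove that $\Ind_{G_{P}}^{G(u)}L$ is an irreducible representation of $\cs(G(u),\B\restr{G(u)})$.

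This reduces matters to the saturated separable Fell bundle $\B\restr{G(u)}$ over the group $G(u)$, whose coefficient algebra is the fibre $B(u)=A(u)$. Under the decomposition $\Prim A=\bigsqcup_{v}\Prim A(v)$ of \cite{wil:crossed}*{Proposition~C.5}, $P$ is a primitive ideal of $A(u)$; and the hypothesis on $\pi\restr A$ unwinds, through the surjection $p_{u}:A\to A(u)$ with kernel $I(u)\subset P$, to the statement that the representation $\rho$ of $A(u)$ determined by $\pi$ is homogeneous with kernel $P$. Hence the Theorem follows once we prove the \emph{strong-EHI for a Fell bundle over a group}: if $H$ is a group with coefficient algebra $A=B(e)$, $N\le H$ is the stabilizer of some $Q\in\Prim A$, and $L$ is an irreducible representation of $\cs(N,\B\restr{N})$ whose associated representation of $A$ is homogeneous with kernel $Q$, then $\Ind_{N}^{H}L$ is irreducible.

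I would prove this group statement by adapting the proof of \cite{echwil:tams08}*{Theorem~1.7} to sections of a Fell bundle. The idea is to show that the commutant of $\Ind_{N}^{H}L$ consists only of scalars, by a Mackey-type computation: using the induced-representation machinery recalled in Section~\ref{sec:induc-repr-fell}, an operator commuting with $\Ind_{N}^{H}L$ produces a self-intertwiner of $\pi$ along the $H$-orbit of $Q$ in $\Prim A$, and the homogeneity hypothesis on the restriction of $\pi$ to $A$ is exactly what forces such an intertwiner --- and hence the entire commutant --- down to the scalars, after which the irreducibility of $L$ finishes the proof. As with Theorem~\ref{thm:indirr}, I expect the main obstacle to be not the shape of the argument but the bookkeeping: the commutant computation must be carried out for continuous sections of a possibly highly nontrivial Banach bundle rather than for functions valued in a fixed \cs-algebra, with due care for the inner products, the left and right actions on $\X$, and the modular function appearing in \eqref{eq:1}.
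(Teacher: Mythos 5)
Your reduction to the group case is exactly the paper's: induction in stages (Theorem~\ref{thm:indstages}) combined with Theorem~\ref{thm:indirr} reduces the problem to showing that $\Ind_{G_P}^{G(u)}L$ is irreducible, where $u=\sigma_A(P)$, and this is precisely Remark~\ref{rem:reducing}. The paper also handles the group case by following the ideal-center-decomposition strategy of \cite{echwil:tams08}*{Theorem~1.7}: the restriction to $A$ of the induced representation is realized as the direct integral $\int^{\oplus}_{G/G_P}\rho_{xG_P}\,d\mugmh$, one shows this is an ideal center decomposition so that the diagonal operators lie in the center of the commutant of $\Pihat(A)$, and then density of the elementary tensors $f\tensor\phi$ in $\Gamma_c(G\times G/G_P,\pr_1^*\B)$ forces any operator commuting with $\Ind_{G_P}^G L$ to commute with the irreducible $\xind L$.

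The gap is in your claim that what remains is ``bookkeeping.'' Homogeneity is hypothesized only for $\pi\restr A$, that is, for the fibre representation $\rho_{eG_P}$ over the identity coset; to obtain an ideal center decomposition you need \emph{every} $\rho_{xG_P}$ to be homogeneous with kernel $x\cdot P$. In the crossed-product setting of \cite{echwil:tams08} this is immediate because the fibre over $xG_P$ is equivalent to $\rho\circ\alpha_x^{-1}$. For a Fell bundle there are no automorphisms $\alpha_x$ with which to transport homogeneity along the orbit; the only available structure is that each fibre $B(x)$ is an $A\sme A$-\ib\ (this is where saturation is used). The paper must therefore prove two things your sketch does not anticipate: that $\rho_{xG_P}$ is equivalent to $\ibind{B(x)}(\rho_{eG_P})$ (Lemma~\ref{lem-key-induced}, after first identifying $\rho_{eG_P}$ with $\pi\restr A$ in Lemma~\ref{lem-equiv-pi}), and that induction via an imprimitivity bimodule preserves homogeneity (Proposition~\ref{prop-ind-homo}) --- a general Morita-theoretic fact the authors point out they have not seen elsewhere. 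Without these ingredients your ``self-intertwiner along the orbit'' has nothing to push against at the cosets $xG_P$ with $x\notin G_P$, and the commutant computation does not close. Once they are supplied, the remainder of your outline matches the paper.
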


\begin{remark}\label{rem:reducing}
  Let $L$ be a representation of $\cs(G_{P},\B)$ with kernel $P$. Then
  Theorem~\ref{thm:indstages} implies that $\Ind_{G_P}^G{L}$ and
  $\Ind_{G(u)}^G\bigl(\Ind_{G_P}^{G(u)}{L}\bigr)$ are equivalent
  representations, where $u=\sigma_A(P)$.  Together with
  Theorem~\ref{thm:indirr}, this shows that the irreducibility of
  $\Ind_{G_{P}}^{G(u)}L$ implies that of $\Ind_{G_{P}}^{G}L$.  Hence,
  in order to show that a Fell bundle satisfies strong-EHI, it will
  suffice to consider the case where $G$ is a group.
\end{remark}

In view of Remark~\ref{rem:reducing}, we 
will assume for the remainder of this
section that $G$ is a \emph{group}.  Hence, we fix a Fell bundle $q:\B\to G$ 
over a group $G$ and let $A=B(e)$ be the corresponding \cs-algebra.

To start, let $H$ be any closed subgroup of $G$.  Fix Haar
measures $\mug$ and $\muh$ on $G$ and $H$, respectively.  Let
$\rho:G\to (0,\infty)$ be a continuous function satisfying
\begin{equation}
  \label{eq:2}
  \rho(xh)=\frac{\Delta_{H}(h)}{\Delta_{G}(h)}\rho(x)\quad\text{for
    all $x\in G$ and $h\in H$.}
\end{equation}
For convenience later, we normalize $\rho$ so that $\rho(e)=1$.  Then
there is a quasi-invariant measure $\mugmh$ on $G/H$ (see, for example
\cite{rw:morita}*{Lemma~C.2}) such that
\begin{equation}
  \label{eq:3}
  \int_{G} f(x)\rho(x)\,d\mug(x)=\int_{G/H}\int_{H}
  f(xh)\,d\muh(h)\,d\mugmh(\dot x)\quad\text{for all $f\in C_{c}(G)$.}
\end{equation}
In fact, $\mugmh$ is quasi-invariant when viewed as a measure on the
unit space of the transformation groupoid $G\times G/H$ (which we
identify with $G/H$). Recall that two elements $(x,yH)$ and $(z,wH)$ in
$G\times G/H$ are composable provided that $wH=x^{-1}yH$ and
$(x,yH)(z,x^{-1}yH)=(xz,yH)$. The inverse of $(x,yH)$ is
$(x^{-1},x^{-1}yH)$. It follows that $s(x,yH)= x^{-1}yH$ and
$r(x,yH)=yH$. The Haar system on the transformation groupoid is given
by $\lambda=\{\mu\times \epsilon_{yH}\}_{yH\in G/H}$.

\begin{lemma}
  \label{lem-modular}
  The modular function on the transformation groupoid $G\times G/H$
  with respect to the quasi-invariant measure $\mugmh$ defined in
  \eqref{eq:3} is
  \begin{equation}
    \label{eq:7}
    \delta(x,yH)=\Delta_{G}(x)\frac{\rho(y)}{\rho(x^{-1}y)}.
  \end{equation}
\end{lemma}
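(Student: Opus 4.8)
The plan is to compute the modular function of the transformation groupoid $G \times G/H$ directly from its definition in terms of the quasi-invariant measure $\mugmh$ on the unit space $G/H$. Recall that for a groupoid with Haar system $\lambda = \{\lambda^{u}\}$ and a quasi-invariant measure $\mugmh$ on the unit space, one forms the measure $\mugmh \circ \lambda$ on the groupoid via $\int f \, d(\mugmh\circ\lambda) = \int_{G/H}\int f\, d\lambda^{u}\, d\mugmh(u)$, and the modular function $\delta$ is by definition the Radon--Nikodym derivative of $\mugmh\circ\lambda$ with respect to its image under inversion, i.e. $d(\mugmh\circ\lambda) = \delta \, d\bigl((\mugmh\circ\lambda)^{-1}\bigr)$. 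So first I would write out $\int_{G\times G/H} \Phi(x,yH)\, d(\mugmh\circ\lambda)$ using the given Haar system $\lambda = \{\mu_{G}\times \epsilon_{yH}\}$, which gives $\int_{G/H}\int_{G} \Phi(x,yH)\, d\mug(x)\, d\mugmh(yH)$.

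Next I would transform this expression by applying $\Phi \mapsto \Phi \circ (\text{inv})$, that is, replace $\Phi(x,yH)$ by $\Phi(x^{-1}, x^{-1}yH)$, and then manipulate the resulting integral back into the standard form $\int_{G/H}\int_{G}\Phi(x,yH)\,\delta(x,yH)^{-1}\, d\mug(x)\,d\mugmh(yH)$ by a sequence of changes of variable. The key tools here are: (i) the left-invariance of $\mug$ combined with the modular identity $d\mug(x^{-1}) = \Delta_{G}(x^{-1})\, d\mug(x)$ to handle the $x \mapsto x^{-1}$ substitution in the $G$-variable; and (ii) the quasi-invariance formula for $\mugmh$ on $G/H$, namely that translation by $x$ on $G/H$ transforms $\mugmh$ by a factor built from $\rho$. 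The cleanest way to access (ii) is to lift everything to $G$ using \eqref{eq:3}: one writes $yH$-integrals against $\mugmh$ as $(x',h)$-integrals over $G \times H$ against $\rho(x')^{-1}\,d\mug(x')\,d\muh(h)$, performs the group-level change of variables there (where only $\Delta_{G}$, $\Delta_{H}$ and left-invariance are in play), and then pushes back down to $G/H$. Reading off the accumulated factor and comparing with the target form yields $\delta(x,yH) = \Delta_{G}(x)\,\rho(y)/\rho(x^{-1}y)$; the defining cocycle property \eqref{eq:2} of $\rho$ guarantees that $\rho(y)/\rho(x^{-1}y)$ only depends on the cosets, so the expression is well defined.

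The main obstacle I anticipate is bookkeeping: keeping careful track of which variable the modular function is being evaluated at after each substitution (it is easy to end up with $\delta$ or $\delta^{-1}$ evaluated at $(x^{-1}, x^{-1}yH)$ rather than $(x,yH)$, which differ by exactly the cocycle identity $\delta(x,yH)\delta(x^{-1},x^{-1}yH) = 1$ that one should sanity-check at the end), and correctly interfacing the $G/H$-level quasi-invariance with the $G$-level modular functions without double-counting the $\Delta_{H}/\Delta_{G}$ factor that already appears in \eqref{eq:2}. A useful consistency check is the unit-space value: at $x = e$ we should get $\delta(e,yH) = 1$, which the formula satisfies since $\rho(y)/\rho(y) = 1$. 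A second check is the cocycle relation $\delta\bigl((x,yH)(z,x^{-1}yH)\bigr) = \delta(x,yH)\delta(z,x^{-1}yH)$, i.e. $\delta(xz,yH) = \Delta_{G}(xz)\rho(y)/\rho(z^{-1}x^{-1}y)$, which telescopes correctly. Once the substitutions are organized so that all $\rho$-factors are collected on one side, the identification is immediate.
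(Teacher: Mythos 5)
Your proposal is correct and follows essentially the same route as the paper: compute $\int_{G/H}\int_G F(x^{-1},x^{-1}yH)\,d\mug(x)\,d\mugmh(\dot y)$ by combining the substitution $x\mapsto x^{-1}$ (producing $\Delta_G(x^{-1})$) with the quasi-invariance of $\mugmh$ accessed by lifting to $G$ via \eqref{eq:3} (producing $\rho(x^{-1}y)/\rho(y)$), and then read off $\delta\bigl((x,yH)^{-1}\bigr)$ and invert. Even the subtlety you flag --- that the computation naturally yields $\delta$ evaluated at $(x^{-1},x^{-1}yH)$ rather than at $(x,yH)$ --- is exactly how the paper's proof concludes.
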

\begin{proof}
  Let $F\in C_c(G\times G/H)$. Then
  \begin{align*}
    \lambda(F)&=\int_{G/H}\int_{G} F(x,yH)\,d\mug(x)\,d\mugmh(\dot y) \\
    &=\int_{G/H}\int_{G} F(x^{-1},yH)\Delta_{G}(x^{-1})
    \,d\mug(x)\,d\mugmh(\dot y) \\
    \intertext{which, using Fubini and \cite{rw:morita}*{Lemma~C.2},
      is} &=
    \int_{G/H}\int_{G/H}\int_HF((xh)^{-1},(xh)^{-1}y)\Delta_G((xh)^{-1})
    \\
    &\hskip 2in
    \frac{\rho((xh)^{-1}y)}{\rho(y)}\frac{1}{\rho(xh)}d\nu(h)d\mugmh(\dot
    y)d\mugmh(\dot x)\\
    &= \int_{G/H}\int_{G}
    F(x^{-1},x^{-1}yH)\Delta_{G}(x^{-1})\frac{\rho(x^{-1}y)}{\rho(y)}
    \,d\mug(x)\,d\mugmh(\dot y). 
  \end{align*}
  It follows that
  \begin{equation*}
    \delta((x,yH)^{-1})=\delta(x^{-1},x^{-1}yH)= \Delta_{G}(x^{-1})
    \frac{\rho(x^{-1}y)}{\rho(y)}. 
  \end{equation*}
  The conclusion follows.
\end{proof}

We now fix a representation $L$ of $\cs(H,\B)$ on $\WW$, and assume
that it is the integrated form of $\pi:\B\restr H\to B(\WW)$ as in
\eqref{eq:1}.  Note that the map $[x,y]\mapsto (xy^{-1},xH)$
allows us to identify the imprimitivity groupoid $H^{G}=(G\times G)/H$
with the transformation groupoid $G\times G/H$.
Then we can complete $\sa_{c}(G,\B)$ to an imprimitivity bimodule $\X$
between $\cs(G\times G/H, \pr_{1}^{*}\B)$ and $\cs(H,\B)$, where
$\pr_{1}:G\times G/H$ is the projection onto the $G$-factor. Hence
$\pr_{1}^{*}\B=\set{(y,xH,b):b\in B(y)}$. Then the induced
representations $\xind L$ of $\cs(G\times G/H,\pr_{1}^{*}\B)$ and
$\indgh L$ of $\cs(G,\B)$ both act on the completion of
\begin{equation*}
  \sa_{c}(G,\B)\odot \WW
\end{equation*}
with respect to the pre-inner product
\begin{align}
  \bip(f\tensor& \xi| g\tensor \eta) = \bip(L(\rip\star<g,f>)\xi|\eta)
  \notag \\
  & = \int_{H} \bip( \pi\bigl(\rip\star<g,f>(h)\bigr)\xi| \eta)
  \Delta_{H}(h)^{-\half} \,d\muh(h) \notag \\
  & =\int_{H}\int_{G}
  \bip(\pi\bigl(g(x^{-1})^{*}f(x^{-1}h)\bigr)\xi|\eta)
  \Delta_{H}(h)^{-\half} \,d\mug(x)\,d\muh(h) \notag \\
  & =\int_{H}\int_{G} \bip(\pi\bigl(g(x)^{*}f(xh)\bigr)\xi|\eta)
  \Delta_{G}(x)^{-1}\Delta_{H}(h)^{-\half} \,d\mug(x)\,d\muh(h) \notag \\
  & =\int_{H}\int_{G/H}\int_{H}
  \bip(\pi\bigl(g(xr)^{*}f(xrh)\bigr)\xi|\eta)
  \Delta_{G}(xr)^{-1}\Delta_{H}(h)^{-\half} \rho(xr)^{-1} \notag \\
  &\hskip3.75in \,d\muh(r)\,d\mugmh(\dot x)
  \,d\muh(h) \notag \\
  &= \int_{G/H} \bip(f\tensor \xi|g\tensor \eta)_{xH} \,d\mugmh (\dot
  x),
  \label{eq:4}
\end{align}
where we define
\begin{multline}
  \label{eq:5}
  \bip(f\tensor \xi|g\tensor \eta)_{xH}\\:= \int_{H}\int_{H}
  \bip(\pi\bigl(g(xr)^{*}f(xh)\bigr)\xi|\eta)
  \Delta_{G}(xr)^{-1}\Delta_{H}(r^{-1}h)^{-\half} \rho(xr)^{-1}
  \,d\muh(h)\,d\muh(r).
\end{multline}

As we described in Section~\ref{sec:induc-repr-fell}, the induced
representations are given by 
\begin{equation*}
  (\xind L)(F)(g\tensor\eta)=F\cdot g\tensor \eta\quad\text{and} \quad
  (\indgh L)(f)(g\tensor \eta)=f*g\tensor\eta,
\end{equation*}
where
\begin{gather*}
  F\cdot g(x)=\int_{G} F(y,xH)g(y^{-1}x) \,d\mug(y) 
\quad\text{and}\quad
f*g(x)=\int_{G} f(y)g(y^{-1}x)\,d\mug(y).
\end{gather*}

We will need the following technical lemma.
\begin{lemma}
  \label{lem-tech-pos}
  For each $x\in G$, the sesquilinear form $\ip(\cdot|\cdot)_{xH}$ is
  positive semi-definite on $\sa_{c}(G,\B)\odot \WW$ and therefore a
  pre-inner product.
\end{lemma}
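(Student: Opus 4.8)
The plan is to derive the positivity of each localized form $\ip(\cdot|\cdot)_{xH}$ from the positivity of the \emph{global} form~\eqref{eq:4} by an elementary scaling argument; the global form is positive semi-definite for free, since by~\eqref{eq:4} it equals $\bip(L(\rip\star<g,f>)\xi|\eta)$ and $L$ is a representation. First I would observe that $C_{c}(G/H)$ acts on $\sa_{c}(G,\B)$ by $(f\cdot\phi)(x):=\phi(\dot x)f(x)$, and that this action is compatible with~\eqref{eq:5} in the following way: in~\eqref{eq:5} the section $f$ is evaluated only at the points $xh$ and $g$ only at the points $xr$, all of which lie in the single coset $\dot x$, so --- since $\pi$ is a $*$-homomorphism, scalars pass through $\pi$ and through the involution ---
\[
\bip((f\cdot\phi)\tensor\xi|{(g\cdot\psi)\tensor\eta})_{xH}
=\phi(\dot x)\,\overline{\psi(\dot x)}\;\bip(f\tensor\xi|g\tensor\eta)_{xH}
\qquad\text{for }\phi,\psi\in C_{c}(G/H).
\]
Writing a general element of $\sa_{c}(G,\B)\odot\WW$ as $\zeta=\sum_{i}f_{i}\tensor\xi_{i}$ and setting $\zeta_{\phi}:=\sum_{i}(f_{i}\cdot\phi)\tensor\xi_{i}$, this gives $\bip(\zeta_{\phi}|\zeta_{\phi})_{xH}=|\phi(\dot x)|^{2}\,\bip(\zeta|\zeta)_{xH}$.

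Next I would record two soft facts about the function $q(\dot x):=\bip(\zeta|\zeta)_{xH}$, which is well defined on $G/H$ (one checks from~\eqref{eq:5}, using left invariance of Haar measure on $H$, that the right-hand side depends only on $\dot x$). First, $q$ is real-valued: the same juggling of modular functions that took~\eqref{eq:4} to~\eqref{eq:5} --- in particular the cocycle identity~\eqref{eq:2} for $\rho$ --- together with the substitution $h\leftrightarrow r$ shows $\overline{\bip(f\tensor\xi|g\tensor\eta)_{xH}}=\bip(g\tensor\eta|f\tensor\xi)_{xH}$, so $\ip(\cdot|\cdot)_{xH}$ is Hermitian and hence $q(\dot x)=\sum_{i,j}\bip(f_{i}\tensor\xi_{i}|f_{j}\tensor\xi_{j})_{xH}$ is real. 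Second, $q$ is continuous: every ingredient of~\eqref{eq:5} is continuous, $\pi$ is strictly continuous, and a Haar integral over $H$ of a compactly supported continuous integrand depends continuously on its parameters. Integrating the displayed identity over $G/H$ against $\mugmh$ and using~\eqref{eq:4}, we obtain for every $\phi\in C_{c}(G/H)$
\[
0\le\bip(\zeta_{\phi}|\zeta_{\phi})=\int_{G/H}|\phi(\dot x)|^{2}\,q(\dot x)\,d\mugmh(\dot x).
\]

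Finally, $\mugmh$ has full support --- immediate from~\eqref{eq:3}, since $\rho>0$ --- so the inequality $\int_{G/H}|\phi|^{2}q\,d\mugmh\ge 0$ for all $\phi\in C_{c}(G/H)$, together with the continuity of $q$, forces $q\ge 0$ everywhere: if $q(\dot x_{0})<0$ for some $\dot x_{0}$, continuity would produce a nonempty open set on which $q<0$, and any nonzero non-negative $\phi\in C_{c}(G/H)$ supported there would contradict the inequality. Hence $\bip(\zeta|\zeta)_{xH}=q(\dot x)\ge 0$ for all $x$ and all $\zeta$, which is the assertion of the lemma. I do not expect a serious obstacle here; the one point that needs care is the logical order --- one must import the positivity of the \emph{global} form~\eqref{eq:4}, which is available on its own from $L$ being a representation, rather than trying to prove the localized positivity first and integrate it up, which would be circular. (A more computational alternative would be to realize $\ip(\cdot|\cdot)_{xH}$ directly as $\bip(L(\langle\cdot,\cdot\rangle_{x})\cdot|\cdot)$ for a suitable pre-Hilbert $\cs(H,\B\restr H)$-module structure on the continuous compactly supported sections of $\B$ over the coset $xH$ --- arising from $\B\restr{xH}$ viewed as an equivalence between $\cs(xHx^{-1},\B\restr{xHx^{-1}})$ and $\cs(H,\B\restr H)$ --- but this requires first setting up that bimodule and is not necessary.)
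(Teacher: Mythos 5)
Your proof is correct, but it takes a genuinely different route from the paper's. The paper argues locally and algebraically: it takes an approximate unit $\set{a_{\kappa}}$ for $A$, notes $a_{\kappa}\cdot f\to f$ in the inductive limit topology so that the form is a limit of its values on $a_{\kappa}\cdot f_{i}\tensor\xi_{i}$, and then uses saturation --- $B(x)$ is an $A\sme A$-\ib, so elements $\sum_{r}c_{r}c_{r}^{*}$ with $c_{r}\in B(x)$ are dense in $A^{+}$ --- to replace $a_{\kappa}$ by such a sum; the identity \eqref{eq:12} then exhibits $\ip(\cdot|\cdot)_{xH}$ on these vectors as a literal sum of squares $\sum_{r}\ip(\eta_{r}^{\kappa}|\eta_{r}^{\kappa})$, with $\eta_{r}^{\kappa}$ built from $\pi\bigl((c_{r}^{\kappa})^{*}f_{i}(xh)\bigr)$, which lands in $\pi(\B\restr H)$ because $(c_{r}^{\kappa})^{*}f_{i}(xh)\in B(h)$. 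Your argument instead imports the positivity of the global form \eqref{eq:4} (which is indeed free from the pre-imprimitivity-bimodule structure already cited in Section~2.1, so there is no circularity), localizes it via the $C_{c}(G/H)$-scaling identity, and recovers pointwise positivity from continuity of $\dot x\mapsto\ip(\zeta|\zeta)_{xH}$ plus full support of $\mugmh$. Both are sound; the trade-off is that the paper's computation is purely fibrewise and needs no continuity or measure-theoretic input (only saturation, which is a standing hypothesis), whereas yours is softer but obliges you to verify the continuity of the fibre form (which does hold, since $\pi$ is strictly continuous and the $H\times H$-integration is over a compact set locally uniformly in $x$) and the well-definedness and Hermitian symmetry coming from \eqref{eq:2} and \eqref{eq:12} --- all of which you correctly flag and which do check out.
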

\begin{proof}
It is not hard to check that $f\mapsto \ip(f\tensor\xi|f\tensor\xi)$
is continuous in the inductive limit topology.  It follows from
\cite{muhwil:nyjm08}*{Lemma~6.1} that if $\sset{a_{\kappa}}$ is an
approximate unit for $A$ and $f\in\sa_{c}(G,\B)$, then $a_{\kappa}\cdot
f\to f$ in the inductive limit topology.  Hence
\begin{equation}\label{eq:13}
  \Bip(\sum_{i=1}^{n} f_{i}\tensor\xi_{i} | \sum_{j=1}^{n}
  f_{j}\tensor \xi_{j}) = \lim_{\kappa} \Bip(\sum_{i=1}^{n}
  a_{\kappa}\cdot f_{i}\tensor\xi_{i} | \sum_{j=1}^{n} 
  a_{\kappa}\cdot f_{j}\tensor \xi_{j}) .
\end{equation}
Since $B(x)$ is an $A\sme A$-\ib, sums of the form $\sum_{r}
c_{r}c_{r}^{*}$, with each $c_{r}\in B(x)$, are dense in $A^{+}$ (see
\cite{muhwil:nyjm08}*{Lemma~6.3}).  Since each $a_{\kappa}\ge0$ in
$A$, this implies that we can replace $a_{\kappa}$ by a sum $\sum_{r}
c_{r}^{\kappa}(c_{r}^{\kappa})^{*}$ with $c_{r}^{\kappa}\in B(x)$ and
still have \eqref{eq:13} hold.

But then since $\rho(e)=1$ we have

  \begin{equation*}
    \label{eq:12}
    \Delta_{G}(xr)^{-1}\rho(xr)^{-1}\Delta_{H}(r^{-1}h)^{-\half} =
    \Delta_{G}(x)^{-1} \Delta_{H}(hr)^{-\half}\rho(x)^{-1},
  \end{equation*}
  and
  \begin{equation*}
    \label{eq:18}
    \bip( \sum_{i} a_{\kappa}\cdot f_{i}\tensor \xi_{i}  | \sum_{j}
    a_{\kappa}\cdot f_{j}\tensor\xi_{j} )_{xH} = \sum_{r}
    \ip(\eta_{r}^{\kappa}|\eta_{r}^{\kappa})
  \end{equation*}
  where
  \begin{equation*}
    \label{eq:19}
    \eta_{r}^{\kappa} = \sum_{i}
    \int_{H}\Delta_{G}(x)^{-\half}\Delta_{H}(h)^{-\half}\pi((c_{r}^{\kappa})^*f_{i}(xh))\xi_{i} 
    \,d\muh(h).
  \end{equation*}
  Since $\sum_{k} \ip(\eta_{k}|\eta_{k})$ is nonnegative, it follows
  from \eqref{eq:13}
  that our form is a pre-inner product as claimed.
\end{proof}


In view of Lemma~\ref{lem-tech-pos}, the (Hausdorff) completion of
$\sa_{c}(G,\B)\odot\WW$ with respect to $\ip(\cdot|\cdot)_{xH}$ is a
Hilbert space $\H(xH)$.  We denote the image of $f\tensor\xi$ in
$\H(xH)$ by $f\tensor_{xH}\xi$.
Note that the class of $f\tensor_{xH}\xi$ depends only on
$f\restr{xH}$.  Furthermore, in view of the Tietze Extension Theorem
for upper semicontinuous Banach bundles
\cite{muhwil:dm08}*{Proposition~A.5}, every $f_{0}\in \sa_{c}(xH,\B)$
is the restriction of some $f\in \sa_{c}(G,\B)$.
Using \cite{wil:crossed}*{Proposition~F.8}, we can form a Borel
Hilbert bundle $G/H*\VV$, and identify the space of both induced
representations with the direct integral $L^{2}(G/H*\VV,\mugmh)$. We
will write $f\tensor\xi$ for the element in $L^{2}(G/H*\VV,\mugmh)$
given by $f\tensor\xi(xH)=f\tensor_{xH}\xi$.

If $b\in B(y)\subset \B$, then we define an operator $\iota_{\B}(b)$
on the vector space $\sa_{c}(G,\B)$ by
\begin{equation}
  \label{eq:6}
  \iota_{\B}(b)f(x)=\Delta_{G}(y)^{\half}bf(y^{-1}x).
\end{equation}
By \cite{kmqw:nyjm10}*{Lemma~1.2 and Remark~1.5}, $\iota_{\B}$ extends
to a homomorphism, also denoted $\iota_{\B}$, of $\B$ into
$M(\cs(G,\B))$.  It follows that $\indgh L$ is the integrated form of
$\Pihat:\B\to B(L^{2}(G/H*\VV,\mugmh))$ given
by
\begin{equation}\label{pihat}
  \Pihat(b)(f\tensor \xi) = \iota_{\B}(b)f\tensor \xi.
\end{equation}

Note that $\Pihat$ is not decomposible; that is, it does not operate
fibrewise on $L^{2}(G/H*\VV,\mugmh)$.  Nevertheless, it does play nice
with the fibres and is related to a Borel $*$-functor for $\pr_{1}\B$
(see \cite{muhwil:dm08}*{Definition~4.5}).

\begin{lemma}
  \label{lem-needed}\emergencystretch=50pt
  If $b\in B(y)\subset \B$, then we get an operator $\pihat(y,xH,b)\in
  B(\H(y^{-1}xH),\H(xH))$ of norm at most $\|b\|$ given by
  \begin{multline*}
    \pihat(y,xH,b)(f\tensor_{y^{-1}xH}
    \xi)=\pihat_{0}(y,xH,b)(f)\tensor_{xH}\xi, \quad\text{where}\\
    \pihat_{0}(y,xH,b)(f)(xh)= \delta(y,xH)^{\half}bf(y^{-1}xh).
  \end{multline*}
  (Recall that $\delta$ is the modular function on $G\times G/H$ for
  the quasi-invariant measure $\mugmh$ --- see
  Lemma~\ref{lem-modular}.)
\end{lemma}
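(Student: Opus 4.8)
The plan is to verify that the formula defining $\pihat_0(y,xH,b)$ descends to a well-defined bounded operator on the completions, and that the norm estimate holds. First I would observe that $\pihat_0(y,xH,b)$ is manifestly linear in $f$ and, because it only multiplies the translated section by the fixed element $b\in B(y)$ and a scalar, maps $\sa_c(G,\B)$ into itself; moreover its action on $f\tensor_{y^{-1}xH}\xi$ depends only on the restriction $f\restr{y^{-1}xH}$ (which is all the class in $\H(y^{-1}xH)$ remembers), since $(y^{-1}xh)$ ranges over $y^{-1}xH$ as $h$ ranges over $H$. The core of the argument is the norm bound: I would compute, for a finite sum $\sum_i f_i\tensor_{y^{-1}xH}\xi_i$, the inner product
\[
\Bip(\textstyle\sum_i \pihat_0(y,xH,b)(f_i)\tensor_{xH}\xi_i\,\Big|\,\sum_j \pihat_0(y,xH,b)(f_j)\tensor_{xH}\xi_j)
\]
by plugging into the defining formula \eqref{eq:5}, and show it is dominated by $\|b\|^2$ times $\Bip(\sum_i f_i\tensor_{y^{-1}xH}\xi_i\mid\sum_j f_j\tensor_{y^{-1}xH}\xi_j)$. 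Positivity and boundedness of the form then give a bounded operator of norm at most $\|b\|$ between the Hilbert space completions; the case $\sum_i f_i\tensor_{y^{-1}xH}\xi_i=0$ forces the image to be $0$, which is exactly well-definedness.

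For the norm computation I would proceed as in the proof of Lemma~\ref{lem-tech-pos}: using an approximate unit $\sset{a_\kappa}$ for $A$ and the fact that $a_\kappa\cdot f\to f$ in the inductive limit topology (hence in each $\H(\cdot)$), reduce to the case where each $f_i$ has been replaced by $a_\kappa\cdot f_i$, and then replace $a_\kappa$ by a finite sum $\sum_r c_r^\kappa (c_r^\kappa)^*$ with $c_r^\kappa\in B(x)$ (using that sums of this form are dense in $A^+$, as in \cite{muhwil:nyjm08}*{Lemma~6.3}). After this reduction, the inner product on the image side becomes $\sum_r\ip(\zeta_r^\kappa\mid\zeta_r^\kappa)$ where $\zeta_r^\kappa$ is obtained from the $\eta_r^\kappa$ of Lemma~\ref{lem-tech-pos} by inserting $b$; here one uses that $\pi$ is a $*$-homomorphism on $\B$, so $\pi((c_r^\kappa)^* b\, f_i(y^{-1}xh))=\pi((c_r^\kappa)^*)\pi(b)\pi(f_i(y^{-1}xh))$ and $\|\pi(b)\|\le\|b\|$. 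The change of variables that produces $\pihat_0$ — translating the argument of $f$ by $y^{-1}$ — contributes exactly the Jacobian-type factor, and a direct check using \eqref{eq:2}, \eqref{eq:7}, and the cocycle-like identity $\Delta_G(xr)^{-1}\rho(xr)^{-1}\Delta_H(r^{-1}h)^{-\half}=\Delta_G(x)^{-1}\Delta_H(hr)^{-\half}\rho(x)^{-1}$ already recorded in the proof of Lemma~\ref{lem-tech-pos} shows that the factor $\delta(y,xH)^{\half}$ in the definition of $\pihat_0$ is precisely what is needed to convert the measure data at $y^{-1}xH$ into the measure data at $xH$. The inequality $\|\pi(b)w\|\le\|b\|\,\|w\|$ applied vector-by-vector then yields $\sum_r\ip(\zeta_r^\kappa\mid\zeta_r^\kappa)\le\|b\|^2\sum_r\ip(\eta_r^\kappa\mid\eta_r^\kappa)$, and passing to the limit in $\kappa$ gives the claimed bound.

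The main obstacle I expect is purely bookkeeping: getting the modular/$\rho$ factors to cancel correctly so that $\delta(y,xH)^{\half}$ is exactly the right normalization. The source of the factor is that $\pihat_0$ moves the base point from $y^{-1}xH$ to $xH$, and the two completions $\H(y^{-1}xH)$ and $\H(xH)$ are built with the weights $\Delta_G(\cdot)^{-1}\rho(\cdot)^{-1}$ evaluated along the respective cosets; the ratio of these weights, after the substitution $x\rightsquigarrow y^{-1}x$ inside the $H\times H$ integral of \eqref{eq:5}, is $\Delta_G(y)\rho(y)/\rho(y^{-1}x)\cdot(\text{something absorbed by the }r\text{-integration})$, which is $\delta(y,xH)$ by Lemma~\ref{lem-modular}. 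Once this identity is pinned down, everything else is a routine consequence of $\pi$ being a contractive $*$-homomorphism on the fibres of $\B$ together with the approximate-unit reduction already used for Lemma~\ref{lem-tech-pos}; in fact one could phrase the whole lemma as saying that $\iota_\B(b)$ of \eqref{eq:6}, which implements $\Pihat$ on $L^2(G/H*\VV,\mugmh)$, restricts fibrewise to the operators $\pihat(y,xH,b)$, so the lemma is really a disintegration statement for the already-constructed $\Pihat(b)$.
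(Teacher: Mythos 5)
There is a genuine gap at the heart of your norm estimate. The map $\pi$ is a $*$-homomorphism of the \emph{restricted} bundle $\B\restr{H}$ into $B(\WW)$; it is not defined on fibres $B(z)$ with $z\notin H$. In your key step you factor $\pi\bigl((c_r^\kappa)^* b f_i(y^{-1}xh)\bigr)$ as a product $\pi\bigl((c_r^\kappa)^*\bigr)\pi(b)\pi\bigl(f_i(y^{-1}xh)\bigr)$ and invoke $\|\pi(b)\|\le\|b\|$ ``vector-by-vector.'' But $b\in B(y)$ and $f_i(y^{-1}xh)\in B(y^{-1}xh)$ lie over points of $G$ that are in general not in $H$; only the \emph{product} $(c_r^\kappa)^*bf_i(y^{-1}xh)$ lands in a fibre over $H$, so none of the individual factors $\pi(b)$, $\pi(f_i(\cdot))$ makes sense, and there is no operator on $\WW$ through which $\zeta_r^\kappa$ factors. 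Consequently the inequality $\sum_r\ip(\zeta_r^\kappa|\zeta_r^\kappa)\le\|b\|^2\sum_r\ip(\eta_r^\kappa|\eta_r^\kappa)$ does not follow pointwise from contractivity of $\pi$ on fibres; it is essentially the content of the lemma. (A secondary issue: your closing remark that the lemma is ``really a disintegration statement for the already-constructed $\Pihat(b)$'' runs backwards --- the paper stresses that $\Pihat$ is \emph{not} decomposable, and boundedness of $\Pihat(b)$ on the direct integral does not by itself yield the fibrewise bound $\|\pihat(y,xH,b)\|\le\|b\|$ for every coset.)

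The correct route, which is the one the paper takes, never applies $\pi$ to fibres off $H$. One first verifies that $\pihat(y,xH,b)$ and $\pihat(y^{-1},y^{-1}xH,b^{*})$ are formal adjoints with respect to the forms $\ip(\cdot|\cdot)_{xH}$ and $\ip(\cdot|\cdot)_{y^{-1}xH}$; this is exactly where the factor $\delta(y,xH)^{\half}$ and the identity $\rho(x)/\rho(y^{-1}x)=\rho(xh)/\rho(y^{-1}xh)$ enter, confirming your bookkeeping intuition. It follows that $\|\pihat(y,xH,b)(f\tensor\xi)\|_{xH}^{2}$ equals the pairing of $f\tensor\xi$ with $\pihat(e,y^{-1}xH,b^{*}b)(f\tensor\xi)$ under $\ip(\cdot|\cdot)_{y^{-1}xH}$, where now $b^{*}b\in B(y)^{*}B(y)=A$. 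Writing $\|b\|^{2}1_{\tilde A}-b^{*}b=c^{*}c$ in the unitization $\tilde A$ and noting that the pairing of $f\tensor\xi$ with $\pihat(e,y^{-1}xH,c^{*}c)(f\tensor\xi)$ is the nonnegative quantity $\|\pihat(e,y^{-1}xH,c)(f\tensor\xi)\|_{y^{-1}xH}^{2}$ gives $\|\pihat(y,xH,b)(f\tensor\xi)\|_{xH}^{2}\le\|b\|^{2}\|f\tensor\xi\|_{y^{-1}xH}^{2}$, which delivers well-definedness and the norm bound in one stroke. Your approximate-unit reduction is the mechanism of Lemma~\ref{lem-tech-pos} (positivity) and is not what is needed here; what is needed is the $C^{*}$-inequality $b^{*}b\le\|b\|^{2}1_{\tilde A}$ transported through $\pihat(e,\cdot\,,\cdot)$.
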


\begin{proof}
  Using $\delta(y,xH)=\Delta_{G}(y)\frac{\rho(x)}{\rho(y^{-1}x)}$ as
  well as the observation that $\frac{\rho(x)}{\rho(y^{-1}x)} =
  \frac{\rho(xh)}{\rho(y^{-1}xh)}$ for any $h\in H$, we first check
  that
  \begin{equation*}
    \bip(\pihat(y,xH,b)(f\tensor\xi)|g\tensor \eta)_{xH}=  \bip(f\tensor
    \xi| \pihat(y^{-1},y^{-1}xH,b^{*})(g\tensor \eta))_{y^{-1}xH}.
  \end{equation*}
  However, in $\tilde A$, we have $\|b\|^{2}1_{\tilde{A}}-b^{*}b\ge0$.
  Hence there is a $c\in \tilde A$ such that
  $\|b\|^{2}1_{\tilde{A}}-b^{*}b=c^{*}c$.  Thus, extending
  $\pihat(e,xH,a)$ to all $a\in \tilde A$ in the usual way, we have
  \begin{align*}
    \|b\|^{2}\|f&\tensor\xi\|_{y^{-1}xH}^{2}-\|\pihat(y,xH,b)(f\tensor\xi)\|_{xH}^{2}\\
    &= \|b\|^{2}\bip(f\tensor\xi|f\tensor\xi)_{y^{-1}xH} -
    \bip(\pihat(y,xH,b)(f\tensor\xi)| {\pihat(y,xH,b)(f\tensor\xi)})_{xH} \\
    &= \|b\|^{2}\bip(f\tensor\xi|f\tensor\xi)_{y^{-1}xH} -
    \bip(f\tensor \xi|
    {\pihat(e,y^{-1}xH,b^{*}b)(f\tensor \xi})_{y^{-1}xH} \\
    &= \bip(\pihat(e, y^{-1}xH,c)(f\tensor\xi)|
    {\pihat(e,y^{-1}xH,c)(f\tensor\xi)})_{y^{-1}xH}
    \\
    &\ge0.
  \end{align*}
  Thus $\pihat(y,xH,b)$ is a well-defined operator from $\H(y^{-1}xH)$
  to $\H(xH)$ for any $x$, and has norm at most $\|b\|$.
\end{proof}

Now it is not hard to see that $\pihat$ is a Borel $*$-functor on
$\pr_{1}\B$, and more significantly, $\xind L$ is the integrated form
of $\pihat$ --- see \cite{muhwil:dm08}*{Proposition~4.10}.

At this point, we want to fix a primitive ideal $P\in \Prim A$ and
replace $H$ by $G_{P}$, where $G_{P}$ is the stability group for the
$G$ action on $\Prim A$ induced using the Rieffel correspondence: see
\cite{ionwil:hjm11}*{\S2} and Section \ref{sec:red-group-case} for
details.  In particular, if $x\in G$, then
\begin{equation}
  \label{eq:10}
  B(x) \cdot P= (x\cdot P) \cdot B(x).
\end{equation}

We let $\rho_{xG_{P}}$ be the representation of $A$ in $\H(xG_{P})$
given by $a\mapsto \pihat(e,xG_{P},a)$.  While \emph{a priori} the
$\rho_{xG_{P}}$ do not seem related to $\pi\restr A$, they in fact are
intimately so.

\begin{lemma}
  \label{lem-equiv-pi}
  The representation $\rho_{eG_P}$ defined above is equivalent to
  $\pi\restr A$.
\end{lemma}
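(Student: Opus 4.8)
The plan is to exhibit an explicit unitary $U\colon\H(eG_{P})\to\WW$ implementing the equivalence, modelled on the classical fact that the fibre of an induced representation over the identity coset recovers the inducing representation. The candidate is the bilinear map $\Phi\colon\Gamma_{c}(G,\B)\odot\WW\to\WW$ defined on elementary tensors by
\[
\Phi(f\tensor\xi)=L\bigl(f\restr{G_{P}}\bigr)\xi=\int_{G_{P}}\Delta_{G_{P}}(h)^{-\half}\pi\bigl(f(h)\bigr)\xi\,d\muh(h),
\]
where the second equality is \eqref{eq:1} applied to $\cs(G_{P},\B\restr{G_{P}})$. This makes sense since $f\restr{G_{P}}\in\Gamma_{c}(G_{P},\B\restr{G_{P}})$, and by the Tietze Extension Theorem \cite{muhwil:dm08}*{Proposition~A.5} every such section arises this way.

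The first step is to put $\ip(\cdot|\cdot)_{eG_{P}}$ in closed form. Setting $x=e$ in \eqref{eq:5}, using the normalization $\rho(e)=1$, and invoking \eqref{eq:2} --- which forces $\rho(r)=\Delta_{G_{P}}(r)/\Delta_{G}(r)$ for $r\in G_{P}$ --- the weight $\Delta_{G}(r)^{-1}\rho(r)^{-1}\Delta_{G_{P}}(r^{-1}h)^{-\half}$ collapses to $\Delta_{G_{P}}(r)^{-\half}\Delta_{G_{P}}(h)^{-\half}$, so that
\[
\bip(f\tensor\xi|g\tensor\eta)_{eG_{P}}=\int_{G_{P}}\int_{G_{P}}\bip(\pi\bigl(g(r)^{*}f(h)\bigr)\xi|\eta)\,\Delta_{G_{P}}(r)^{-\half}\Delta_{G_{P}}(h)^{-\half}\,d\muh(h)\,d\muh(r).
\]
Because $\pi$ is a $*$-homomorphism of the Fell bundle, $\pi(g(r))^{*}\pi(f(h))=\pi\bigl(g(r)^{*}f(h)\bigr)$, and the right-hand side is exactly $\bigl(\Phi(f\tensor\xi)\mid\Phi(g\tensor\eta)\bigr)_{\WW}$. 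By bilinearity of $\Phi$ this yields $\ip(v|v)_{eG_{P}}=\|\Phi(v)\|_{\WW}^{2}$ for every $v\in\Gamma_{c}(G,\B)\odot\WW$; in particular $\Phi$ annihilates the null space of $\ip(\cdot|\cdot)_{eG_{P}}$ and induces an isometry $U\colon\H(eG_{P})\to\WW$. Its range contains $L\bigl(\Gamma_{c}(G_{P},\B\restr{G_{P}})\bigr)\WW$ (again by Tietze), which is dense in $\WW$ since $L$, being the integrated form of the nondegenerate $\pi$, is nondegenerate; hence $U$ is unitary.

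It remains to verify the intertwining. By Lemma~\ref{lem-needed} with $y=x=e$, and since $\delta(e,eG_{P})=\Delta_{G}(e)\rho(e)/\rho(e)=1$ by Lemma~\ref{lem-modular}, the operator $\rho_{eG_{P}}(a)=\pihat(e,eG_{P},a)$ carries $f\tensor_{eG_{P}}\xi$ to $(a\cdot f)\tensor_{eG_{P}}\xi$, where $(a\cdot f)(x)=af(x)$ is pointwise left multiplication. Applying $U$ and using multiplicativity of $\pi$ on $B(e)B(h)\subseteq B(h)$,
\[
U\bigl(\rho_{eG_{P}}(a)(f\tensor_{eG_{P}}\xi)\bigr)=\int_{G_{P}}\Delta_{G_{P}}(h)^{-\half}\pi(a)\pi\bigl(f(h)\bigr)\xi\,d\muh(h)=\pi(a)\,U(f\tensor_{eG_{P}}\xi),
\]
so $U\rho_{eG_{P}}(a)=(\pi\restr A)(a)\,U$ on the dense span of the $f\tensor_{eG_{P}}\xi$, hence on all of $\H(eG_{P})$, and the lemma follows. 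I expect the only genuine obstacle to be the modular-function bookkeeping that makes $\ip(\cdot|\cdot)_{eG_{P}}$ factor through $L$; once that identity is secured, everything else is formal.
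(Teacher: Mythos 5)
Your proposal is correct and follows essentially the same route as the paper: the same unitary $U(f\tensor_{eG_{P}}\xi)=\int_{G_{P}}\Delta_{G_{P}}(h)^{-\half}\pi(f(h))\xi\,d\muh(h)$, the same collapse of the weight in \eqref{eq:5} via \eqref{eq:2} and $\rho(e)=1$ to show $U$ is isometric, and the same nondegeneracy/Tietze argument for surjectivity. The only difference is that you spell out the intertwining via Lemma~\ref{lem-needed} and $\delta(e,eG_{P})=1$, which the paper dismisses as clear; that is a welcome addition, not a divergence.
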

\begin{proof}
  We claim that the map $U:\H(G_{P})\to \WW$ defined
  via
  \begin{equation*}
    U(f\tensor_{eG_{P}}\xi)=\int_{eG_P}\Delta_{H}(h)^{-\half}\pi(f(h))\xi\,d\muh(h),
  \end{equation*} is a unitary
  that intertwines $\rho_{eG_P}$ with $\pi\restr A$.

  Let $f\tensor \xi$ and $g\tensor \eta$ be two elements in
  $\Gamma_c(G,\B)\odot \WW$. Then
  \begin{align*}
    &\bigl(U(f\tensor_{eG_{P}} \xi)\bigm | U(g\tensor_{eG_{P}}\eta) \bigr) \\
    &=\Bigl(\int_{G_{P}}\Delta_H(h)^{-\frac{1}{2}}\pi(f(h))\xi
    \,d\nu(h)\Bigm
    |\int_{G_{P}}\Delta_H(r)^{-\frac{1}{2}}\pi(g(r))\eta
    \,d\nu(r)\Bigr)\\
    &=\int_{G_{P}}\int_{G_{P}}\bigl(\pi(f(h))\xi\bigm |
    \pi(g(r))\eta\bigl)\Delta_H(h)^{-\frac{1}{2}}\Delta_H(r)^{-\frac{1}{2}}
    \,d\nu(h)\,d\nu(r)\\
    \intertext{which, using the fact that $\pi\restr \B$ is a
      $\ast$-homomorphism and Equation \eqref{eq:12}, equals}
    &=\int_{G_{P}}\int_{G_{P}}\bigl(\pi(g(r)^\ast f(h))\xi\bigm |
    \eta\bigr)\Delta_G(r)^{-1}\rho(r)^{-1}\Delta_H(r^{-1}h)\,d\nu(h)\,d\nu(r)\\
    &=\bigl(f\tensor_{eG_{P}}\xi\bigm
    |g\tensor_{eG_{P}}\eta\bigr)_{eG_{P}}.
  \end{align*}
  The surjectivity of $U$ follows from the fact that there are enough
  sections and the fact that $\pi$ is nondegenerate (see
  \cite{simwil:ijm13}*{Section 3}). Thus $U$ is indeed a unitary that
  clearly intertwines $\rho_{eG_P}$ with $\pi\restr A$.
\end{proof}

Since each $B(x)$ is an $A\sme A$-\ib, we can form the induced
representation $\ibind{B(x)}( \rho_{eG_{P}})$ of $A$ on
$B(x)\tensor_{A}\H(eG_{P})$.

\begin{lemma}
  \label{lem-key-induced}
  For each $x\in G$, $\ibind{B(x)}(\rho_{eG_{P}})$ is equivalent to
  $\rho_{xG_P}$.
\end{lemma}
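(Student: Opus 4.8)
The plan is to exhibit an explicit unitary $V:B(x)\tensor_{A}\H(eG_{P})\to \H(xG_{P})$ that intertwines the two representations, modelled on the unitary $U$ constructed in Lemma~\ref{lem-equiv-pi}. Recall that $B(x)\tensor_{A}\H(eG_{P})$ is the Hausdorff completion of the algebraic tensor product $B(x)\odot\H(eG_{P})$ with respect to the inner product $\bip(b\tensor\zeta|c\tensor\omega)=\bip(\rho_{eG_{P}}(c^{*}b)\zeta|\omega)_{eG_{P}}$, and $\ibind{B(x)}(\rho_{eG_{P}})$ acts by $a\cdot(b\tensor\zeta)=(ab)\tensor\zeta$. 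On the other hand $\rho_{xG_{P}}(a)=\pihat(e,xG_{P},a)$ acts on $\H(xG_{P})$ by $a\cdot(f\tensor_{xG_{P}}\xi)=(\iota_{\B}(a)f)\tensor_{xG_{P}}\xi$, where $\iota_{\B}(a)f(z)=af(z)$ for $a\in A=B(e)$ (so $\Delta_{G}(e)^{\half}=1$). First I would define $V$ on elementary tensors of the form $b\tensor(f\tensor_{eG_{P}}\xi)$, with $b\in B(x)$ and $f\in\sa_{c}(G,\B)$, by $V\bigl(b\tensor(f\tensor_{eG_{P}}\xi)\bigr)=(b\cdot f)\tensor_{xG_{P}}\xi$, where $b\cdot f\in\sa_{c}(G,\B)$ should be chosen so that its restriction to $xG_{P}$ records the product $b f(h)$ along the coset; concretely $b\cdot f(z)=\Delta_{G}(x)^{\half}\,b\,f(x^{-1}z)$, i.e.\ $b\cdot f=\iota_{\B}(b)f$ in the notation of \eqref{eq:6}, restricted to $xG_{P}$. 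Since the class of $g\tensor_{xG_{P}}\xi$ depends only on $g\restr{xG_{P}}$ and every section in $\sa_{c}(xG_{P},\B)$ extends, this is well defined on the relevant dense set.

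Next I would check that $V$ is isometric. Using the formula \eqref{eq:5} for $\ip(\cdot|\cdot)_{xG_{P}}$ together with the modular identity \eqref{eq:12} (rewritten as in the displayed equation in the proof of Lemma~\ref{lem-tech-pos}) and the relation $\rho(xr)=\frac{\Delta_{H}(r)}{\Delta_{G}(r)}\rho(x)$ from \eqref{eq:2}, the inner product $\bip((b\cdot f)\tensor_{xG_{P}}\xi|(c\cdot g)\tensor_{xG_{P}}\eta)_{xG_{P}}$ unwinds, after the substitution absorbing $\Delta_{G}(x)$ and $\rho(x)$, into a double integral over $G_{P}\times G_{P}$ of $\bip(\pi\bigl(g(r)^{*}c^{*}b\,f(h)\bigr)\xi|\eta)$ against the same modular weight that appears in $\bip(f\tensor_{eG_{P}}\xi|g\tensor_{eG_{P}}\eta)_{eG_{P}}$ but with $f(h)$ replaced by $c^{*}b\,f(h)$. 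That last expression is exactly $\bip(\rho_{eG_{P}}(c^{*}b)(f\tensor_{eG_{P}}\xi)|g\tensor_{eG_{P}}\eta)_{eG_{P}}$, which is the inner product of $c^{*}b\tensor(f\tensor_{eG_{P}}\xi)$ paired appropriately, i.e.\ the $B(x)\tensor_{A}\H(eG_{P})$-inner product of $b\tensor(f\tensor_{eG_{P}}\xi)$ with $c\tensor(g\tensor_{eG_{P}}\eta)$. This gives isometry, hence that $V$ is well defined and extends to the completion.

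For surjectivity I would argue that the range of $V$ contains all $g\tensor_{xG_{P}}\xi$: by saturation of the Fell bundle, finite sums $\sum_{r}c_{r}c_{r}^{*}$ with $c_{r}\in B(x)$ are dense in $A^{+}$ (as used in Lemma~\ref{lem-tech-pos}, citing \cite{muhwil:nyjm08}*{Lemma~6.3}), so products $B(x)A$ — equivalently $\iota_{\B}(B(x))A$ — are dense in $B(x)$, and combined with the nondegeneracy of $\pi$ and the fact that there are enough sections, the elements $(b\cdot f)\restr{xG_{P}}$ span a dense subspace of $\H(xG_{P})$; this is the same density argument that established surjectivity of $U$ in Lemma~\ref{lem-equiv-pi}. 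Finally the intertwining property is immediate from the definitions: for $a\in A$, $V\bigl(\ibind{B(x)}(\rho_{eG_{P}})(a)(b\tensor\zeta)\bigr)=V\bigl((ab)\tensor\zeta\bigr)=\bigl((ab)\cdot f\bigr)\tensor_{xG_{P}}\xi=\bigl(\iota_{\B}(a)(b\cdot f)\bigr)\tensor_{xG_{P}}\xi=\rho_{xG_{P}}(a)V(b\tensor\zeta)$, using $\iota_{\B}(ab)=\iota_{\B}(a)\iota_{\B}(b)$. The main obstacle I anticipate is purely bookkeeping: verifying that all the modular-function and $\rho$ factors — $\Delta_{G}$, $\Delta_{H}$, $\rho$, and the $\delta$ of Lemma~\ref{lem-modular} — collapse correctly in the isometry computation, since the weight in \eqref{eq:5} mixes all of them and one must be careful that the $\Delta_{G}(x)^{\half}$ inserted into the definition of $b\cdot f$ is exactly what is needed (via \eqref{eq:2}) to make the $x$-dependent factors cancel and leave the $G_{P}$-integral with precisely the $eG_{P}$-weight; the observation $\frac{\rho(x)}{\rho(x^{-1}x)}=\rho(x)$ and Lemma~\ref{lem-needed}'s remark that $\frac{\rho(x)}{\rho(y^{-1}x)}=\frac{\rho(xh)}{\rho(y^{-1}xh)}$ are the identities that make this work.
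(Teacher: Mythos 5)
Your proposal is essentially the paper's own proof: the paper defines $U(c\tensor f\tensor_{eG_{P}}\xi)=\Delta_{G}(x)^{\half}\rho(x)^{\half}(f_{c}\tensor_{xG_{P}}\xi)$ with $f_{c}(xh)=cf(h)$, verifies isometry by unwinding \eqref{eq:5} with the modular identities, notes the intertwining is immediate, and gets dense range from saturation ($B(x)\tensor_{A}B(h)\simeq B(xh)$) together with \cite{muhwil:dm08}*{Lemma~A.4}. The one discrepancy is your normalization: taking $b\cdot f=\iota_{\B}(b)f$ inserts only $\Delta_{G}(x)^{\half}$, and since $\rho(xr)^{-1}=\rho(x)^{-1}\rho(r)^{-1}$ the isometry computation then leaves a residual factor $\rho(x)^{-1}$ rather than cancelling completely; the paper's extra $\rho(x)^{\half}$ is exactly what absorbs it. Because $\rho(x)$ is a fixed positive scalar, your map is $\rho(x)^{-\half}$ times the paper's unitary and the equivalence of representations still follows, but the claim that $\Delta_{G}(x)^{\half}$ alone makes all the $x$-dependent factors cancel is not quite right.
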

\begin{proof}
  Define $U:B(x)\odot \H(eG_{P})\to \H(xG_{P})$ by
  \begin{equation*}
    U(c \tensor
    f\tensor_{eG_{P}}\xi)=
    \Delta_{G}(x)^{\half}\rho(x)^{\half}(f_{c}\tensor_{xG_{P}}\xi),  
  \end{equation*}
  where $f_{c}$ is any section in $\sa_{c}(G,\B)$ satisfying
  \begin{equation*}
    f_{c}(xh)=cf(h).
  \end{equation*}
  To check that $U$ is isometric one can proceed as follows:
  \begin{align*}
    \bip(U(c\tensor {}&f\tensor_{eG_{P}}\xi) | {U(b\tensor
      g\tensor_{eG_{P}}\eta)})_{xG_{P}} =\Delta_G(x)\rho(x)\bip(f_c
    \tensor_{xG_{P}}\xi|
    g_b\tensor_{xG_{P}}\eta)_{xG_{P}}\\
    &=\int_{G_{P}}\int_{G_{P}}\bip(\pi(g_b(xr)^*f_c(xh))\xi
    |\eta)\Delta_G(x)\Delta_G(xr)^{-1}
    \Delta_H(r^{-1}h)^{-\half}\\
    &\hskip3in\rho(x)\rho(xr)^{-1}d\nu(h)d\nu(r)\\
    &=\int_{G_{P}}\int_{G_{P}}\bip(\pi(g(r)^*b^*cf(h))\xi|\eta)
    \Delta_G(r)^{-1}\Delta_H(r^{-1}h)^{-\half}\rho(r)^{-1}d\nu(h)d\nu(r)\\
    &=\int_{G_{P}}\int_{G_{P}}
    \bip(\pi(g(r)^*\hat{\pi}_0(e,eG_{P},b^*c)(f)(h))\xi|\eta)
    \Delta_G(r)^{-1}\Delta_H(r^{-1}h)^{-\half}\\
    &\hskip3in\rho(r)^{-1}d\nu(h)d\nu(r)\\
    &=\bip(\rho_{eG_{P}}(b^*c)(f\tensor_{eG_{P}}\xi)|
    g\tensor_{eG_{P}}\eta)_{eG_{P}}\\
    &=\bip(c\tensor f\tensor_{eG_{P}}\xi|b\tensor
    g\tensor_{eG_{P}}\eta).
  \end{align*}
  Clearly $U$ intertwines $\ibind{B(x)}(\rho_{eG_{P})})$ with
  $\rho_{xG_{P}}$.

  Notice that $\set{c\tensor f(h)\,:\,f\in \Gamma_c(G,\B), c\in B(x)}$
  is dense in $B(x)\tensor_A B(h)\simeq B(xh)$. Therefore the set of
  $f_c(xh)$ where $f_c(xh)=cf(h)$, $f\in \Gamma_c(G,\B)$, and $c\in
  B(x)$ is dense in $B(xh)$. \cite{muhwil:dm08}*{Lemma~A.4} implies
  that $U$ has dense range.
\end{proof}

Of course, just as with group dynamics (see
\cite{echwil:tams08}*{\S2.3}), the homogeneity of $\pi\restr A$ will
play a crucial role.  As in \cite{echwil:tams08}*{Remark~1.5}, we will
use \cite{sau:jfa79}*{Lemme~1.5} or \cite{eff:tams63}*{Theorem~1.4} to
characterize homogeneous representations $\rho$ of $A$ as those that
have the property that for any ideal $I$ in $A$ such that $I
\not\subset \ker\rho$,
$\overline{\rho(I)\HH_{\rho}}=\HH_{\rho}$. Alternatively, note that
$\rho$ is homogeneous if given any approximate identity $\set{u_{i}} $
for an ideal $I\not\subset \ker \rho$, we have $\rho(u_{i})\to
I_{\HH_{\rho}}$ in the strong operator topology.

We need the following general Morita Equivalence result which we have not seen
elsewhere.\footnote{For other such results, see \cite{hrw:pams07}.}
\begin{prop}
  \label{prop-ind-homo}
  Let $A$ and $B$ be $\cs$-algebras and let $\X$ be an $A\sme B$-\ib.
  Then $\pi$ is a homogeneous representation of $B$ if and only if
  $\xind \pi$ is a homogeneous representation of $A$.
\end{prop}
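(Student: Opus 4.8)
The plan is to use the characterization of homogeneous representations stated just before the proposition: a representation $\rho$ of a \cs-algebra $C$ is homogeneous if and only if for every ideal $I$ of $C$ with $I\not\subset\ker\rho$, we have $\overline{\rho(I)\HH_{\rho}}=\HH_{\rho}$. The proof will be a symmetric argument: by the Rieffel correspondence between ideals of $A$ and ideals of $B$ provided by the \ib\ $\X$, one translates the condition ``$I\not\subset\ker(\xind\pi)$'' into the corresponding statement about the matched ideal of $B$, and then shows that the cyclic subspace $\overline{(\xind\pi)(I)\HH_{\Ind\pi}}$ is all of $\HH_{\Ind\pi}$ precisely when the analogous subspace for $\pi$ is all of $\HH_\pi$. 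Since the statement is an ``if and only if'' and $\X$ is an imprimitivity bimodule (so $\tilde\X$, its dual, is a $B\sme A$-\ib), it suffices to prove one direction.

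First I would recall the Rieffel correspondence: ideals $I\triangleleft A$ correspond bijectively to ideals $J\triangleleft B$ via $J=\overline{\rip B<\X I,\X>}$ (equivalently $I=\overline{\lip A<\X,\X J>}$), and under this correspondence $\X I=\X J$ as submodules. I would also use the standard fact (from \cite{rw:morita}, e.g.\ Proposition 3.24 or the surrounding material) that $\ker(\xind\pi)$ corresponds under the Rieffel correspondence to $\ker\pi$; hence $I\not\subset\ker(\xind\pi)$ if and only if the matched ideal $J\not\subset\ker\pi$. Next, recall that $\HH_{\Ind\pi}$ is the completion of $\X\odot\HH_\pi$ and that $(\xind\pi)(\lip A<x,y>)(z\tensor\xi)=x\tensor\pi(\rip B<y,z>)\xi$. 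The key computation is to identify $\overline{(\xind\pi)(I)\HH_{\Ind\pi}}$: using that $I$ is spanned by elements $\lip A<x,y>$ with $\X I=\X J$, and that $\X=\overline{\X B}$ (nondegeneracy), one shows this closed subspace equals the closure of $\set{x\tensor\xi:x\in\X,\ \xi\in\overline{\pi(J)\HH_\pi}}$, i.e.\ it is the induced-module built from the subspace $\overline{\pi(J)\HH_\pi}$ of $\HH_\pi$ instead of all of $\HH_\pi$. Granting this, if $\pi$ is homogeneous and $I\not\subset\ker(\xind\pi)$ then $J\not\subset\ker\pi$, so $\overline{\pi(J)\HH_\pi}=\HH_\pi$, whence $\overline{(\xind\pi)(I)\HH_{\Ind\pi}}=\overline{\X\odot\HH_\pi}=\HH_{\Ind\pi}$; this gives homogeneity of $\xind\pi$. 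The converse follows by applying this to the dual bimodule $\tilde\X$, since $\ibind{\tilde\X}(\xind\pi)\cong\pi$.

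The main obstacle I expect is the identification of $\overline{(\xind\pi)(I)\HH_{\Ind\pi}}$ with the induced module over $\overline{\pi(J)\HH_\pi}$. One inclusion is straightforward: for $a=\lip A<x,y>\in I$ we may arrange (approximating) $y\in\X J$, so $\rip B<y,z>\in J$ and $(\xind\pi)(a)(z\tensor\xi)=x\tensor\pi(\rip B<y,z>)\xi$ lies in the span of $x\tensor\eta$ with $\eta\in\pi(J)\HH_\pi$. The reverse inclusion is the delicate point: given $x\in\X$ and $\eta=\pi(b)\xi$ with $b\in J$, one must exhibit $x\tensor\eta$ in $\overline{(\xind\pi)(I)\HH_{\Ind\pi}}$. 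Here I would use an approximate identity $\set{b_i}$ for $J$ together with the fact that $b=\lim \sum_k \rip B<y_k,z_k>$ for suitable $y_k,z_k$ with $y_k\in\X J$ (since $J=\overline{\rip B<\X J,\X>}$ and $\X J=\X$ when... no — rather, write $b$ as a limit of inner products $\rip B<y,z>$ with $y\in\X J$, using $J=\overline{\rip B<\X,\X J>}$ and the nondegeneracy $\X=\overline{\X B}$, and also factor $b=b'b''$), and then $x\tensor\pi(b)\xi=\lim \sum_k x\tensor\pi(\rip B<y_k,z_k>)\xi = \lim\sum_k (\xind\pi)(\lip A<x,y_k>)(z_k\tensor\xi)$, which lies in $(\xind\pi)(I)\HH_{\Ind\pi}$ because $\lip A<x,y_k>\in\overline{\lip A<\X,\X J>}=\overline{\lip A<\X,\X> I}\subset I$. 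Careful bookkeeping of which inner-product slot carries the ideal, and of the nondegeneracy/approximation steps, is where the real work lies; everything else is formal.
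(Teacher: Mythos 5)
Your proposal is correct, but it runs on a different mechanism than the paper's proof. The paper also reduces to one direction by symmetry and also invokes the Rieffel correspondence to replace the ideal $K\not\subset\ker(\xind\pi)$ of $A$ by the matched ideal $J\not\subset\ker\pi$ of $B$; from there, however, it uses the \emph{approximate-identity} characterization of homogeneity: take an approximate unit $\set{u_{i}}$ for $K$, observe that $\X\cdot J$ is a $K\sme J$-\ib\ so that $u_{i}\cdot y\to y$ for $y\in\X\cdot J$, and then check by a short inner-product computation that $(\xind\pi)(u_{i})\bigl(x\tensor\pi(b)h\bigr)\to x\tensor\pi(b)h$ for $b\in J$, such vectors being dense precisely because $\overline{\pi(J)\HH_{\pi}}=\HH_{\pi}$. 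You instead work with the cyclic-subspace characterization and identify $\overline{(\xind\pi)(I)\HH_{\Ind\pi}}$ with the closed span of the $x\tensor\eta$ with $\eta\in\overline{\pi(J)\HH_{\pi}}$, using the linking identities of the bimodule ($\lip A<x,y>\cdot z=x\cdot\rip B<y,z>$, $I=\overline{\operatorname{span}}\lip A<\X,\X\cdot J>$, $J=\overline{\operatorname{span}}\rip B<\X\cdot J,\X>$), and you get the converse from the dual bimodule $\tilde\X$ --- which is just the paper's ``by symmetry'' made explicit. Your delicate step does close as you sketch it: $\rip B<\X\cdot J,\X>$ spans a dense subspace of $J$ and $\lip A<\X,\X\cdot J>\subset I$, so $x\tensor\pi(b)\xi$ is a limit of vectors $(\xind\pi)\bigl(\lip A<x,y_{k}>\bigr)(z_{k}\tensor\xi)$ with $\lip A<x,y_{k}>\in I$ (the factorization $b=b'b''$ you mention is not needed). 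Comparing the two: the paper's route needs almost no bookkeeping about which inner-product slot carries the ideal --- the only real input is that $u_{i}$ acts as an approximate unit on the sub-bimodule $\X\cdot J$ --- whereas yours is purely algebraic, avoids approximate units for $K$ altogether, and in passing proves the slightly stronger structural fact that the essential subspace of $(\xind\pi)(I)$ is exactly the subspace of $\X\tensor_{B}\HH_{\pi}$ induced from the essential subspace of $\pi(J)$.
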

\begin{proof}
  By symmetry, it suffices to show that $\pi$ homogeneous implies that
  $\xind \pi$ is homogeneous.  So, we suppose that $\pi$ is
  homogeneous and that $K$ is an ideal in $A$ with $K\not\subset \ker
  (\xind \pi )$.  Let $\set{u_{i}}$ be an approximate unit for $K$.
  As remarked above, it will suffice to see that $(\xind
  \pi)(u_{i})\to I$ in the strong operator topology.  By the Rieffel
  correspondence (\cite{rw:morita}*{\S3.3}), we can assume that
  $K=\xind J$ for an ideal $J$ in $B$ with $J\not\subset \ker\pi$.
  Since $\pi$ is homogeneous, $\overline{\pi(J)\HH_{\pi}}=\HH_{\pi}$.
  Hence it will suffice to show that
  \begin{equation*}
    u_{i}\cdot x \tensor \pi(b)h \to x\tensor \pi(b)h
  \end{equation*}
  in the Hilbert space $\X\tensor_{B}\HH_{\pi}$ for all $x\in \X$,
  $b\in J$ and $h\in \HH_{\pi}$.

  However, $\X\cdot J$ is a $K\sme J$-\ib\ (see
  \cite{rw:morita}*{Proposition~3.25}).  Hence for any $y\in \X\cdot
  J$, we have $u_{i}\cdot y\to y$ in $\X$ (for example, see
  equation~(2.5) in the proof of \cite{rw:morita}*{Corollary~2.7}).
  So in $\X\tensor_{B}\HH_{\pi}$, we have
  \begin{align*}
    \bip(u_{i}\cdot x\tensor\pi(b)h| {u_{i}\cdot x\tensor\pi(b)h}) &=
    \bip(\pi\bigl(\rip\star<u_{i}\cdot x,u_{i}\cdot
    x>\bigr)\pi(b)h|{\pi(b)h}) \\
    &= \bip(\pi\bigl(\rip\star<u_{i}\cdot x\cdot b,u_{i}\cdot x\cdot
    b>\bigr)h|h),
  \end{align*}
  which converges to
  \begin{equation*}
    \bip(\pi\bigl(\rip\star<x,x>\bigr)\pi(b)h|{\pi(b)h}) =
    \bip(x\tensor\pi(b)h | {x\tensor\pi(b)h})
  \end{equation*}
  since $x\cdot b\in \X\cdot J$.  Then, using similar calculations, we
  can see that
  \begin{align*}
    \|u_{i}\cdot x\tensor\pi(b)h& -x\tensor \pi(b)h\|^{2} \\
    &= \bip(u_{i}\cdot x\tensor\pi(b)h -x\tensor \pi(b)h |{u_{i}\cdot
      x\tensor\pi(b)h -x\tensor \pi(b)h})
  \end{align*}
  converges to zero as required.
\end{proof}


\begin{prop}
  \label{prop-key-homo}
  Suppose that $L$ has kernel $P$ and that $\pi\restr A$ is
  homogeneous.  Then, for all $x\in G$, $\rho_{xG_{P}}$ is a
  homogeneous representation of $A$ with kernel $x\cdot P$.
\end{prop}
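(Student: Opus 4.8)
The plan is to deduce this from Lemmas~\ref{lem-equiv-pi} and~\ref{lem-key-induced} together with Proposition~\ref{prop-ind-homo}, so that essentially nothing new has to be proved. First I would dispose of the case $x=e$: by hypothesis $\ker(\pi\restr A)=P$, and Lemma~\ref{lem-equiv-pi} provides a unitary intertwining $\rho_{eG_{P}}$ with $\pi\restr A$, so $\rho_{eG_{P}}$ is a homogeneous representation of $A$ with kernel exactly $P$.

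Next I would recall that each $B(x)$ is an $A\sme A$-\ib\ and that the Rieffel correspondence it induces on the ideal lattice of $A$ is precisely the $G$-action $P\mapsto x\cdot P$ on $\Prim A$: indeed that action was \emph{defined} via this correspondence in Section~\ref{sec:red-group-case}, and \eqref{eq:10} records the relevant statement, $B(x)\cdot P=(x\cdot P)\cdot B(x)$. By Lemma~\ref{lem-key-induced}, $\rho_{xG_{P}}$ is equivalent to $\ibind{B(x)}(\rho_{eG_{P}})$, the representation of $A$ induced from $\rho_{eG_{P}}$ along $B(x)$.

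Then I would apply Proposition~\ref{prop-ind-homo} with $\X=B(x)$ and both copies of the algebra equal to $A$: since $\rho_{eG_{P}}$ is homogeneous, so is $\ibind{B(x)}(\rho_{eG_{P}})$, and hence so is the equivalent representation $\rho_{xG_{P}}$. For the kernel I would invoke the standard fact from the theory of Morita equivalence (see \cite{rw:morita}*{\S3.3}) that inducing along an imprimitivity bimodule carries the kernel of a representation to the image of that kernel under the Rieffel correspondence; applied to $\ker\rho_{eG_{P}}=P$ and the bimodule $B(x)$, that image is $x\cdot P$ by the identification above, equivalently directly by \eqref{eq:10}. Thus $\rho_{xG_{P}}$ is homogeneous with kernel $x\cdot P$, as claimed.

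The argument is short precisely because all of the work has been front-loaded into the lemmas; the only point requiring a little care is the bookkeeping that matches the Rieffel correspondence attached to $B(x)$ with the $G$-action on $\Prim A$ and confirms that the kernel of an induced representation is \emph{exactly} (and not merely contained in) the induced ideal — but both are part of the standard package once \eqref{eq:10} is available, so I do not expect any genuine obstacle here.
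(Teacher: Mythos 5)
Your argument is correct and follows exactly the route the paper takes: Lemma~\ref{lem-equiv-pi} handles $x=e$, and Lemma~\ref{lem-key-induced} together with Proposition~\ref{prop-ind-homo} (plus the Rieffel-correspondence bookkeeping via \eqref{eq:10}) gives the general case. You have merely spelled out the kernel computation that the paper leaves implicit in ``the rest follows.''
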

\begin{proof}
  By Lemma~\ref{lem-equiv-pi} and our assumptions on $L$ and $\pi$, we
  have that $\rho_{eG_{P}}$ is homogeneous with kernel $P$.  The rest
  follows from Lemma~\ref{lem-key-induced} and
  Proposition~\ref{prop-ind-homo}.
\end{proof}

We have set things up so that the representation of $A$ given by
$\Pihat\restr A$ (where $\Pihat$ is defined in \eqref{pihat}) is the
direct integral
\begin{equation}\label{eq:14}
  \int_{G/G_{P}}^{\oplus} \rho_{xG_{P}} \,d\mugmh
\end{equation}
on $L^{2}(G/G_{P}*\VV,\mugmh)$.  Recall that the \emph{diagonal
  operators}, $\Delta(G/G_{P}*\VV,\mugmh)$, on
$L^{2}(G/G_{P}*\VV,\mugmh)$ are, by definition, the multiplication
operators determined by bounded Borel functions on $G/G_{P}$ (see
\cite{wil:crossed}*{Definition~F.13}).  In this case,
$\Delta(G/G_{P}*\VV,\mugmh)$ are exactly the operators of the form
$M(\phi)$ with $\phi\in L^{\infty}(G/G_{P})$ and
\begin{equation*}
  M(\phi)(f\tensor \xi)=\phi\cdot f\tensor \pi,
\end{equation*}
where $\phi\cdot f(x)=\phi(x G_{P})f(x)$.

We have worked fairly hard to see that \eqref{eq:14} is an ideal
center decomposition (see \cite{wil:crossed}*{Definition~G.18 and
  Theorem~G.20}).  As in the proof of
\cite{echwil:tams08}*{Theorem~1.7}, the essential feature we require
from this observation is that $M(L^{\infty}(G/G_{P}))$ lies in the
center of the commutant of $\Pihat(A)$ so that
\begin{equation}
  \label{eq:15}
  \Pihat(A)'\subset M(L^{\infty}(G/G_{P}))'\subset M(C_{0}(G/G_{P}))'.
\end{equation}



\begin{lemma}
  \label{lem-dense}
  Let $f\in \sa_{c}(G,\B)$ and $\phi\in C_{c}(G/G_{P})$.  Then we get
  a section $f\tensor \phi$ in $\sa_{c}(G\times
  G/G_{P},\pr_{1}^{*}\B)$ via
  \begin{equation*}
    f\tensor\phi(y,xH)=\phi(xH)f(y).
  \end{equation*}
  Such sections span a dense subspace of $\sa_{c}(G\times
  G/G_{P},\pr_{1}\B)$ in the inductive limit topology.
\end{lemma}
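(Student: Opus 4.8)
The plan is to prove density by a partition-of-unity argument over $G/G_{P}$: I will show that an arbitrary $F\in\sa_{c}(G\times G/G_{P},\pr_{1}^{*}\B)$ is a uniform limit, with all approximants supported in one fixed compact set, of finite sums $\sum_{i}g_{i}\tensor\phi_{i}$ of the basic sections $f\tensor\phi$; by the definition of the inductive limit topology this exhibits $F$ in the closure of their span.

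To begin, set $K_{1}=\pr_{1}(\supp F)$ and $K_{2}=\pr_{2}(\supp F)$, both compact, and fix compact neighborhoods $K_{1}'$ of $K_{1}$ in $G$ and $K_{2}'$ of $K_{2}$ in $G/G_{P}$. For each $\dot x_{0}\in K_{2}$, restricting $F$ to the slice $G\times\set{\dot x_{0}}$ and using the canonical identification of $(\pr_{1}^{*}\B)\restr{G\times\set{\dot x_{0}}}$ with $\B$ gives a section $g_{\dot x_{0}}:=F(\cdot,\dot x_{0})$ lying in $\sa_{c}(G,\B)$, with $\supp g_{\dot x_{0}}\subset K_{1}$. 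The crux is the following uniform estimate: given $\varepsilon>0$, for each $\dot x_{0}\in K_{2}$ there is an open neighborhood $V_{\dot x_{0}}$ of $\dot x_{0}$, contained in the interior of $K_{2}'$, such that $\|F(y,\dot x)-g_{\dot x_{0}}(y)\|<\varepsilon$ for all $y\in G$ and all $\dot x\in V_{\dot x_{0}}$. To obtain it, observe that $(y,\dot x)\mapsto F(y,\dot x)-g_{\dot x_{0}}(y)$ is a continuous section of $\pr_{1}^{*}\B$ --- it is the difference of $F$ and the pull-back along $\pr_{1}$ of the continuous section $g_{\dot x_{0}}$ of $\B$ --- so its norm is upper semicontinuous and the set on which that norm is $<\varepsilon$ is open. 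This open set contains the compact slice $K_{1}'\times\set{\dot x_{0}}$ (the section vanishes identically there), so the tube lemma produces an open tube $K_{1}'\times V_{\dot x_{0}}$ inside it, which we may shrink into the interior of $K_{2}'$; for $y\notin K_{1}'$ both $F(y,\dot x)$ and $g_{\dot x_{0}}(y)$ vanish, so the estimate in fact holds for every $y\in G$.

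With the estimate in hand, extract a finite subcover $V_{\dot x_{1}},\dots,V_{\dot x_{n}}$ of $K_{2}$, write $g_{i}=g_{\dot x_{i}}$, and choose nonnegative $\phi_{i}\in C_{c}(G/G_{P})$ with $\supp\phi_{i}\subset V_{\dot x_{i}}$, with $\sum_{i}\phi_{i}\le1$ everywhere, and with $\sum_{i}\phi_{i}\equiv1$ on $K_{2}$. Put $F'=\sum_{i}g_{i}\tensor\phi_{i}$, a finite sum of basic sections. Then $\|F(y,\dot x)-F'(y,\dot x)\|<\varepsilon$ for every $(y,\dot x)$: when $\dot x\in K_{2}$ one writes $F(y,\dot x)-F'(y,\dot x)=\sum_{i}\phi_{i}(\dot x)\bigl(F(y,\dot x)-g_{i}(y)\bigr)$ and notes that $\phi_{i}(\dot x)\ne0$ forces $\dot x\in V_{\dot x_{i}}$; when $\dot x\notin K_{2}$ one has $F(y,\dot x)=0$, and $\phi_{i}(\dot x)\ne0$ again forces $\dot x\in V_{\dot x_{i}}$, whence $\|g_{i}(y)\|=\|F(y,\dot x)-g_{i}(y)\|<\varepsilon$ and therefore $\|F'(y,\dot x)\|\le\sum_{i}\phi_{i}(\dot x)\|g_{i}(y)\|<\varepsilon$. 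Since $\supp F'\subset K_{1}\times K_{2}'$ independently of $\varepsilon$, letting $\varepsilon\to0$ places $F$ in the inductive-limit closure of the span of the sections $f\tensor\phi$.

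The main obstacle is the uniform estimate of the middle paragraph. It must be done honestly in the upper semicontinuous Banach bundle setting, where only sublevel sets of the norm are open rather than preimages of opens under a continuous norm, and it requires care near the edges of the supports: the behaviour of the approximants for $\dot x$ just outside $K_{2}$ is precisely what dictates both the normalization $\sum_{i}\phi_{i}\le1$ and the use of the fact that each $g_{i}$ is itself small off its nominal support. The remaining points --- continuity of a pull-back section, existence of a subordinate partition of unity on the locally compact space $G/G_{P}$, and the support bookkeeping --- are routine.
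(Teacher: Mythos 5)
Your argument is correct, but it is not the route the paper takes. The paper's proof is two lines: it observes that the span $\mathcal{G}$ of the elementary tensors is stable under multiplication by $g\circ\pr_{1}$ for $g\in C_{c}(G)$ and by $\psi\circ\pr_{2}$ for $\psi\in C_{c}(G/G_{P})$ (hence, after a Stone--Weierstrass step, under multiplication by $C_{c}(G\times G/G_{P})$), notes that the values $\set{f\tensor\phi(y,xH)}$ are dense in each fibre $\pr_{1}^{*}\B(y,xH)=B(y)$, and then invokes the general density criterion for upper semicontinuous Banach bundles (a small variation of \cite{muhwil:dm08}*{Lemma~A.4}: a $C_{c}(X)$-submodule of $\sa_{c}(X,\B)$ that is fibrewise dense is dense in the inductive limit topology). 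You instead prove the statement from scratch by slicing $F$ at finitely many points $\dot x_{i}$ of $\pr_{2}(\supp F)$, using upper semicontinuity of the norm together with the tube lemma to get uniform $\varepsilon$-closeness of $F(\cdot,\dot x)$ to the slice $g_{\dot x_{i}}$ on a neighborhood of each $\dot x_{i}$, and then patching with a subordinate partition of unity normalized by $\sum_{i}\phi_{i}\le 1$; your care with the behaviour just outside $\pr_{2}(\supp F)$ and with keeping all approximants supported in one fixed compact set is exactly what is needed for inductive-limit convergence, and I see no gap. The trade-off is the usual one: the paper's proof is shorter and reuses machinery already in play elsewhere in the argument, while yours is self-contained, produces an explicit approximating sequence, and in effect reproves the relevant special case of the cited lemma; either is acceptable.
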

\begin{proof}
  Let $\mathcal{G}$ be the span of sections of the form $f\tensor
  \phi$. Since $g\cdot (f\tensor\phi)=(g\cdot f)\tensor\phi$ and
  $\psi\cdot (f\tensor\phi)=f\tensor(\psi\cdot\phi)$ for $g\in
  C_c(G),f\in \sa_C(G,\B)$, $\psi,\phi\in C_c(G/G_P)$, it follows that
  $\mathcal{G}$ is closed under multiplication by functions in
  $C_{c}(G\times G/G_{P})$.

  Notice that, for $(y,xH)\in G\times G/G_{P}$, we have that
  $\pr_{1}^{*}\B(y,xH)=B(y)$. Therefore, the set $\set{f\tensor
    \phi(y,xH)\: | f\tensor\phi\in \mathcal{G}}$ is dense in
  $\pr_{1}^{*}\B(y,xH)$.  A small variation of
  \cite{muhwil:dm08}*{Lemma~A.4} implies the result.
\end{proof}

\begin{proof}[Proof of Theorem~\ref{thm-main-1.7} in the case $G$ is a
  group] 
  Suppose that $T\in B(L^{2}(G/G_{P}*\VV,\mugmh)$ is in the commutant
  of $\Ind_{G_{P}}^{G }L$.  It suffices to see that $T$ is a scalar
  operator.
  But $T$ must also commute with $\Pihat(b)=(\Ind_{G_{P}}^{G}
  L)\bigl(\iota_{\B}(b)\bigr)$ for all $b\in \B$.  In particular, $T$
  belongs to $\Pihat(A)'$.  Hence $T$ commutes with $M(\phi)$ for all
  $\phi\in C_{c}(G/G_{P})$ by \eqref{eq:15}.  But
  \begin{equation*}
    (\xind L)(f\tensor \phi)= M(\phi)(\Ind_{G_{P}}^{G} L)(f).
  \end{equation*}
  Thus $T$ commutes with $(\xind L)(f\tensor \phi)$.  By
  Lemma~\ref{lem-dense}, this means $T$ commutes with the irreducible
  representation $\xind L$.  Thus $T$ is a scalar.
\end{proof}

\section{The Type I Case}
\label{sec:type-i-case}


In this section, we prove the appropriate analogue of
\cite{echwil:tams08}*{Lemma~3.2}.  Recall that if $p:\B\to G$ is a Fell
bundle and if $P$ is a primitive
ideal of $A=\sa_{0}(\go,\B)$, then $P\supset I(u)$ for
$u=\sigma_{A}(P)$.  In particular, we can view $P$ as a primitive
ideal of the quotient $A(u)=B(u)$.  Of course, $P$ is locally closed
in $\prima$ if and only if it is locally closed when viewed as a
primitive ideal of $A(u)$.
\begin{prop}
  \label{prop-key-loc-closed}
  Let $p:\B\to G$ be a separable, saturated Fell bundle over a locally
  compact groupoid $G$ and let $P$ be a primitive ideal in $\Prim A$
  where $A=\sa_{0}(\go,\B)$ is the associated \cs-algebra.  Suppose
  that $P$ is locally closed in $\Prim A$ and 
  that $L$ is an irreducible representation of $\cs(G_{P},\B)$ which
  is the integrated form of the strongly continuous map
  $\pi:\B\restr{G_{P}}\to B(\H)$ such that $\ker \pi\restr{A(u)}=P$
  where $u=\sigma_{A}(P)$. 
  Then $\pi\restr{A(u)}$ is a homogeneous representation of $A(u)$.
\end{prop}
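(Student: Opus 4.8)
The plan is to use the local-closedness of $P$ to extract a single $G_P$-invariant ideal $J$ of $A(u)$ on which $\pi$ is forced to act non-degenerately by the irreducibility of $L$, and then to upgrade this to homogeneity of $\pi\restr{A(u)}$ by a soft $\cs$-algebraic argument. Recall that $A(u)=B(u)$, that $G_P\subseteq G(u)$, that $G(u)$ acts on $\Prim A(u)$ by homeomorphisms via the Rieffel correspondence with $G_P$ fixing $P$ (see Section~\ref{sec:red-group-case} and \cite{ionwil:hjm11}), and that we may assume $\pi$, hence $\pi\restr{A(u)}$, is non-degenerate. First I would build $J$. Since $P$ is locally closed in $\Prim A(u)$ (equivalently, in $\prima$), the set $Y:=\overline{\{P\}}\setminus\{P\}$ is closed; as $G_P$ fixes $P$ it fixes $\overline{\{P\}}$, hence fixes $Y$ and the open set $W:=\Prim A(u)\setminus Y$. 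Let $J$ be the ideal of $A(u)$ with $\Prim J=W$. Because the ideal/open-set correspondence is equivariant, $J$ is $G_P$-invariant; because $P\in W$, we have $J\not\subseteq P$; and because $\overline{\{P\}}\cap W=\{P\}$, the point $P\cap J$ is a closed point of $\Prim J$, so that $E:=J/(J\cap P)$ is a $\cs$-algebra whose primitive ideal space is a single point.

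Next I would show $\overline{\pi(J)\H}=\H$. Setting $\H_0:=\overline{\pi(J)\H}$, the $G_P$-invariance of $J$ gives, via the Rieffel correspondence for the $A(u)$--$A(u)$ imprimitivity bimodule $B(x)$ and the saturation of $\B$ (so that $\overline{B(x)^*B(x)}=A(u)$; see \cite{rw:morita}), the identity $\overline{B(x)J}=\overline{JB(x)}$ for all $x\in G_P$. Together with $\pi(A(u))\H_0\subseteq\H_0$ this shows that $\H_0$ is invariant under $\pi(b)$ and $\pi(b)^*$ for every $b\in\B\restr{G_P}$, hence reduces $L$. Since $L$ is irreducible and $\H_0\ne 0$ (as $J\not\subseteq P=\ker\pi\restr{A(u)}$), we get $\H_0=\H$; in particular no nonzero vector is annihilated by $\pi(J)$, so $\pi\restr J$ factors through a non-degenerate faithful representation $\bar\pi$ of $E$.

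It then remains to prove, and apply, the following $\cs$-algebraic fact: if $E$ is a $\cs$-algebra with one-point primitive ideal space and $\bar\pi$ is a non-degenerate faithful representation of $E$, then $\bar\pi$ is homogeneous. Indeed, for a nonzero ideal $\mathcal I$ of $E$ put $\mathcal K:=\overline{\bar\pi(\mathcal I)\H}$; then the subrepresentation of $\bar\pi$ on $\mathcal K^\perp$ annihilates $\mathcal I$, and it cannot be nonzero (its kernel would be a proper ideal, hence contained in a primitive ideal, hence $0$, contradicting $\mathcal I\ne 0$) nor zero (then $\mathcal K^\perp$ would be a null subspace, contradicting non-degeneracy), so $\mathcal K=\H$; the characterization of homogeneity recalled in Section~\ref{sec:group-case} then applies. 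Using this: given an ideal $I$ of $A(u)$ with $I\not\subseteq P$, primeness of $P$ gives $I\cap J\not\subseteq P$, so the image of $I\cap J$ in $E$ is a nonzero ideal and hence acts with dense range; since $\bar\pi$ of that image is $\pi(I\cap J)$ (using $J\cap P\subseteq P=\ker\pi\restr{A(u)}$), we get $\H=\overline{\pi(I\cap J)\H}\subseteq\overline{\pi(I)\H}$, i.e.\ $\pi\restr{A(u)}$ is homogeneous.

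I expect the crux to be the choice of $J$ in the first two steps: it must be simultaneously $G_P$-invariant --- which is exactly what makes $\overline{\pi(J)\H}$ an $L$-reducing subspace, so that irreducibility applies --- and such that $P\cap J$ is a closed point of $\Prim J$, which is what collapses $\Prim E$ to a single point for the final step. Local-closedness of $P$ is precisely the property that makes both demands compatible. The routine but delicate points are the equivariance of the ideal/open-set correspondence and the identity $\overline{B(x)J}=\overline{JB(x)}$ for $G_P$-invariant $J$, which depends on saturation of $\B$.
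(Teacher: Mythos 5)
Your proof is correct, and while it rests on the same two pillars as the paper's --- local closedness of $P$ manufactures a $G_{P}$-invariant ideal not contained in $P$, and irreducibility of $L$ then forces $\pi$ to act nondegenerately on it --- the way you deploy them differs genuinely in both halves. Your ideal $J$ is exactly the paper's $J=\bigcap\set{P':P'\supsetneq P}$, and the closedness of $\overline{\sset{P}}\setminus\sset{P}$ is precisely the content of the cited \cite{echwil:tams08}*{Lemma~3.1}; but where the paper forms, for \emph{each} ideal $I\not\subset P$, the invariant ideal $K=\bigcap_{x\in G_{P}}x\cdot I\supset J$ and reruns the dynamical argument, you run it once for $J$ and dispose of general $I$ by primeness of $P$ together with the purely \cs-algebraic fact that a faithful nondegenerate representation of an algebra with one-point primitive spectrum is homogeneous. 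More substantially, the paper realizes the invariant ideal at the level of the Fell-bundle algebra, identifying $\cs(G_{P},\B_{K})$ with an ideal of $\cs(G_{P},\B)$ via \cite{ionwil:hjm11}*{Lemma~3.5}, and must then separately prove $L\bigl(\cs(G_{P},\B_{K})\bigr)\H\not=\sset0$ by a bump-function estimate exploiting the strong continuity of $\pi$ near $e$; you instead show directly that $\overline{\pi(J)\H}$ is invariant under every $\pi(b)$, $b\in\B\restr{G_{P}}$ --- using $\overline{B(x)J}=\overline{JB(x)}$, which is the paper's \eqref{eq:10} combined with $x\cdot J=J$ --- so that it reduces $L$, and the non-vanishing step collapses to the trivial observation $J\not\subset\ker\pi\restr{A(u)}$. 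Your route is shorter and avoids both the ideal-of-the-crossed-product machinery and the continuity estimate; the paper's keeps the argument entirely inside the lattice of ideals of $\cs(G_{P},\B)$, which is the viewpoint it develops elsewhere. The only points you flag as routine that genuinely deserve a line of proof are the equivariance of the ideal/open-set correspondence (immediate since each $h_{x}$ is a homeomorphism of $\Prim A(u)$ fixing $P$) and the passage from invariance of $\overline{\pi(J)\H}$ under all $\pi(b)$ to commutation with $L(f)$ (immediate from the integrated form \eqref{eq:1}); neither is a gap.
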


We'll need the following standard observation from Morita theory.

\begin{lemma}
  \label{lem-hx-int}
  Suppose that $\X$ is an $A\sme B$-\ib.  Then the Rieffel
  correspondence $h:\I(B)\to\I(A)$ preserves arbitrary intersections.
\end{lemma}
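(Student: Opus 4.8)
The plan is to exploit the defining property of the Rieffel correspondence: for an $A\sme B$-\ib\ $\X$, the map $h:\I(B)\to\I(A)$ sends an ideal $J$ of $B$ to $h(J)=\overline{\Span}\set{\lip A<x\cdot b,y>:x,y\in\X,\ b\in J}$, equivalently to the closed span of $\lip A<x,y>$ with $x\in\X\cdot J$ and $y\in\X$. The key point is that $h$ is a lattice isomorphism from $\I(B)$ onto $\I(A)$ (\cite{rw:morita}*{\S3.3}), so in particular it preserves the order relation $\subseteq$ in both directions. I would first note that any lattice isomorphism between complete lattices automatically preserves arbitrary meets (and joins), since meets are characterized order-theoretically as greatest lower bounds.

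So the proof is essentially a one-line order-theoretic argument once we record the right facts. First I would recall that $\I(A)$ and $\I(B)$, ordered by inclusion, are complete lattices, with meet given by intersection: $\bigwedge_{\alpha}J_{\alpha}=\bigcap_{\alpha}J_{\alpha}$. Second, I would invoke the fact that the Rieffel correspondence $h$ is an order-preserving bijection whose inverse (the Rieffel correspondence associated to the dual module $\tilde\X$) is also order-preserving; this is exactly \cite{rw:morita}*{\S3.3} and in particular \cite{rw:morita}*{Proposition~3.24}. Third, given a family $\set{J_{\alpha}}$ in $\I(B)$ with intersection $J=\bigcap_{\alpha}J_{\alpha}$, I would observe that $h(J)\subseteq h(J_{\alpha})$ for all $\alpha$, so $h(J)\subseteq\bigcap_{\alpha}h(J_{\alpha})$; and conversely, applying $h^{-1}$ to the ideal $\bigcap_{\alpha}h(J_{\alpha})$, we get an ideal contained in every $J_{\alpha}$, hence contained in $J$, so $\bigcap_{\alpha}h(J_{\alpha})\subseteq h(J)$. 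Combining these gives $h(J)=\bigcap_{\alpha}h(J_{\alpha})$, as claimed.

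There is really no serious obstacle here; the only thing to be careful about is to use that $h$ is a bijection with monotone inverse, not merely monotone — monotonicity alone would give only the inclusion $h(\bigcap J_{\alpha})\subseteq\bigcap h(J_{\alpha})$. Alternatively, one could avoid mentioning the inverse by quoting that $h$ preserves joins as well (or that it is an isomorphism of lattices), and then deduce the meet statement via the formula $\bigcap_{\alpha}K_{\alpha}=\bigcup\set{K\in\I(B):K\subseteq K_{\alpha}\text{ for all }\alpha}$; but invoking the monotone inverse is cleanest. I would therefore keep the write-up to a short paragraph, citing \cite{rw:morita}*{\S3.3} for the fact that $h$ and $h^{-1}$ are both inclusion-preserving and then giving the two-inclusion argument above.
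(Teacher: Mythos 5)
Your argument is correct, and it takes a genuinely different route from the paper's. You reduce the lemma to pure order theory: a bijection between complete lattices that preserves inclusion in both directions preserves arbitrary greatest lower bounds, and in $\I(A)$ and $\I(B)$ the greatest lower bound of a family is its intersection (arbitrary intersections of closed two-sided ideals are again such ideals). The only substantive input is then the standard fact that the Rieffel correspondence is such a bijection, with inverse the correspondence attached to the dual bimodule; that is \cite{rw:morita}*{Theorem~3.22} (your pointer to Proposition~3.24 is slightly off, but \S3.3 covers it). The paper instead uses the concrete description $h(I)=\set{a\in A: a\cdot \X\subset \overline{\X\cdot I}}$ and reduces the lemma to the submodule identity $\bigcap_{\alpha} \overline{\X\cdot I_{\alpha}}=\overline{\X\cdot \bigcap_{\alpha} I_{\alpha}}$, which it proves by adapting the analytic argument of \cite{wil:crossed}*{Lemma~5.19}; there the inclusion $\supset$ is trivial and the reverse inclusion requires genuine work with the bimodule. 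Your route buys brevity and sidesteps that analytic lemma entirely, at the price of invoking the full bijectivity of the Rieffel correspondence with monotone inverse rather than just the formula for $h$; the paper's route stays inside the module-theoretic framework it uses elsewhere in the same section (for instance in the proof of Proposition~\ref{prop-key-loc-closed}, where the sets $\overline{\X\cdot I}$ reappear). You are also right to flag that monotonicity of $h$ alone yields only $h(\bigcap_{\alpha} J_{\alpha})\subset \bigcap_{\alpha} h(J_{\alpha})$ --- the easy direction in both proofs --- so the inclusion-preserving inverse is doing the real work in your version.
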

\begin{proof}
  Notice that
  \begin{equation*}
    h(I)=\set{a\in J:a\cdot \X\subset \overline{\X\cdot I}}.
  \end{equation*}
Hence the result follows once we establish that
\begin{equation*}
  \bigcap \overline{\X\cdot I_{J}}=\overline{\X\cdot \bigcap I_{J}}.
\end{equation*}
But this can be proved exactly as in the proof of
\cite{wil:crossed}*{Lemma~5.19}. 
\end{proof}

\begin{proof}[Proof of Proposition~\ref{prop-key-loc-closed}]
  We may as well replace $\B$ by $\B\restr{G_{P}}$ and $A$ by $A(u)$.
   Let $\rho=\pi\restr{A}$.  As in the proof of
  \cite{echwil:tams08}*{Lemma~3.2}, it will suffice to see that if $I$
  is any ideal in $A$ such that $I\not\subset P$, then
  $\overline{\rho(I)\H} =\H$.  Since $\rho(I)=\rho(I+P)$, we can
  assume $I$ properly contains $P$.  If $x\in G_{P}$, let $x\cdot
  I:=h_{x}(I)$.  Then $x\cdot I$ properly contains $P$.  Since any
  ideal in $A$ is the intersection of those primitive ideals which
  contain it,
  \begin{equation*}
    K:=\bigcap_{x\in G_{P}} x\cdot I \supset \bigcap
    _{\substack{P'\in\Prim A \\ P'\supset P \\ P'\not=P}} P':= J.
  \end{equation*}
Since $P$ is locally closed in $\Prim A$,
\cite{echwil:tams08}*{Lemma 3.1} implies that $J$ properly contains
$P$.  Hence so does $K$.  Moreover $K$ must be $G_{P}$-invariant as
defined in \cite{ionwil:hjm11}*{\S3.1}: if $P'$ is a primitive ideal
containing $K$, then using Lemma~\ref{lem-hx-int}, we have
\begin{equation*}
  y\cdot P' = h_{y}(P') \supset \bigcap_{x\in G_{P}} h_{y}(x\cdot I) =
  \bigcap _{y\in G_{P}}h_{xy}(I) =K.
\end{equation*}

Thus we can identify $\cs(G_{P},\B_{K})$ with an ideal of $\cs(G_{P},\B)$ as
in \cite{ionwil:hjm11}*{Lemma~3.5}.  We claim
\begin{equation}
  \label{eq:8}
  L\bigl(\cs(G_{P},\B_{K})\bigr)\H\subset \overline{\pi(K)\H}.
\end{equation}
To see this, let $w\in (\pi(K)\H)^{\perp}$.  Then for any $h\in\H$,
\begin{equation}\label{eq:21}
  \bip(L(f)h|w)=\int_{G_{P}} \Delta_{G_{p}}(s)^{-\half}\bip(\pi(f(s))h|w)\,ds.
\end{equation}
But $f(s)\in B(s)\cdot K=(s\cdot K)\cdot B(s)=K\cdot B(s)$.  Hence
$\pi(f(s))h\in \pi(K)\H$ for each $h$.  Thus the integrand in \eqref{eq:21} is
zero and $\bip(L(f)h|w)=0$ for all $w\in (\pi(K)\H)^{\perp}$.  Thus
\eqref{eq:8} follows.

Next we claim that
\begin{equation}
  \label{eq:22}
  L\bigl(\cs(G_{P},\B_{K})\bigr)\H\not=\sset0.
\end{equation}
To see this, note that as $K$ properly contains $P$, there is an $a\in
K$ and $h\in\H$ such that $\pi(a)h\not=0$.  But as Fell bundles always
have sufficiently many sections, there is an $f\in
\sa_{c}(G_{P},\B_{K})$ such that $f(e)=a$.  Since $\pi$ is strongly
continuous, there is a neighborhood $V$ of $e$ in $G_{P}$ such that
for all $s\in V$ we have
\begin{equation*}
  \|\pi(f(s))h- \pi(a)h\|<\frac14 \|\pi(a)h\|,
\end{equation*}
and such that
\begin{equation*}
  \Delta_{G_{P}}(s)^{-\half}  \le 2.
\end{equation*}
Let $\phi\in C_{c}^{+}(G_{P})$ be such that $\supp \phi\subset V$ and
\begin{equation*}
  \int_{G_{P}}\Delta_{G_{P}}(s)^{-\half}\phi(s)\,ds=1.
\end{equation*}
Then $\phi\cdot f\in \sa_{c}(G_{P},\B_{K})$ and
\begin{align*}
  \| L(\phi\cdot f)h-\pi(a)f\| &= \Bigl\| \int_{G_{P}}
  \Delta_{G_{P}}(s)^{-\half} \phi(s) \bigl(\pi(f(s)h-\pi(a)h\bigr)
  \,ds\Bigr\| \\
&< 2\frac 14 \|\pi(a)h\|=\frac12\|\pi(a)h\|.
\end{align*}
Thus $L(\phi\cdot f)h\not=0$.  This establishes \eqref{eq:22}.

However, as $L$ is assumed irreducible, \eqref{eq:22} and \eqref{eq:8}
imply that
\begin{equation*}
  \overline{L\bigl(\cs(G_{P},\B_{K})\bigr)\H} =\H \subset
  \overline{\pi(K)\H}. 
\end{equation*}
Thus $\overline{\rho(I)\H}=\H$ and $\rho$ is homogeneous as claimed.
\end{proof}

Theorems \ref{prop-key-loc-closed} and \ref{thm-main-1.7} imply
immediately the following generalization of
\cite{echwil:tams08}*{Theorem 1.8}.
\begin{prop}
  Suppose that $p:\B\to G$ is a separable, saturated Fell bundle over
  a locally compact groupoid $G$ such that
  all points are locally closed in $\Prim A$. Then $p:\B\to G$
  satisfies strong-EHI. In particular, if $A$ is of type I, then
  $p:\B\to G$ satisfies the strong-EHI.
\end{prop}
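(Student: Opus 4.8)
The plan is to read the Proposition straight off Definition~\ref{def-ehi}, feeding the data through Proposition~\ref{prop-key-loc-closed} and then Theorem~\ref{thm-main-1.7}. First I would fix $P\in\Prim A$ and an irreducible representation $L$ of $\cs(G_{P},\B\restr{G_{P}})$ with kernel $P$, and set $u=\sigma_{A}(P)$. Since $G_{P}$ is a closed subgroup of the stability group $G(u)$, it is a second countable locally compact group, so by \cite{kmqw:nyjm10}*{Lemma~1.3} the representation $L$ is the integrated form of a strictly continuous --- hence strongly continuous --- nondegenerate $*$-homomorphism $\pi:\B\restr{G_{P}}\to B(\WW)$. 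The unit fibre of $\B\restr{G_{P}}$ is $A(u)=B(u)$, and the clause ``$L$ has kernel $P$'' unwinds precisely to $\ker\bigl(\pi\restr{A(u)}\bigr)=P$, with $P$ viewed as a primitive ideal of $A(u)$.

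Next I would bring in the hypothesis: since all points of $\Prim A$ are locally closed, $P$ is locally closed in $\Prim A$, equivalently (as recorded just before Proposition~\ref{prop-key-loc-closed}) locally closed when viewed as a primitive ideal of $A(u)$. Proposition~\ref{prop-key-loc-closed} then applies verbatim and shows that $\pi\restr{A(u)}$ is a homogeneous representation of $A(u)$.

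The only step that needs care is transporting this up to $A=\sa_{0}(\go,\B)$, since Theorem~\ref{thm-main-1.7} asks for homogeneity of $\pi\restr A$. Here $\pi\restr A$ denotes the representation of $A$ obtained by inflating $\pi\restr{A(u)}$ along the quotient map $p_{u}:A\to A(u)$, so its kernel is $p_{u}^{-1}(P)=P$, matching the ``kernel $P$'' requirement. Because $p_{u}$ is surjective, $I\mapsto p_{u}(I)$ is a bijection from the ideals of $A$ not contained in $P$ onto the ideals of $A(u)$ not contained in $\ker\bigl(\pi\restr{A(u)}\bigr)$, and $(\pi\restr A)(I)\WW=(\pi\restr{A(u)})\bigl(p_{u}(I)\bigr)\WW$; hence the essential-subspace criterion for homogeneity recalled in Section~\ref{sec:group-case} transfers, and $\pi\restr A$ is homogeneous with kernel $P$. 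Theorem~\ref{thm-main-1.7} now gives that $\Ind_{G_{P}}^{G}L$ is irreducible; as $P$ and $L$ were arbitrary, $p:\B\to G$ satisfies strong-EHI.

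For the ``in particular'' clause I would appeal to the standard structure theory of type~I \cs-algebras (as in the proof of \cite{echwil:tams08}*{Theorem~1.8}): a separable type~I \cs-algebra admits a composition series whose successive subquotients have continuous trace, hence Hausdorff spectrum, so that each primitive ideal, being a point of the spectrum of exactly one subquotient, is open in a closed subset of $\Prim A$ and in particular locally closed. Thus $A$ of type~I forces all points of $\Prim A$ to be locally closed, and the first assertion applies. I expect no genuine obstacle anywhere: all the analytic content sits inside Proposition~\ref{prop-key-loc-closed} and Theorem~\ref{thm-main-1.7}, and the one thing demanding vigilance is the bookkeeping --- keeping straight whether a given ``$P$'' denotes the primitive ideal of $A$ or its image in $A(u)$, and whether ``$\pi\restr A$'' means a representation of $A(u)$ or its inflation to $A$.
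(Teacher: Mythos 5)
Your proposal is correct and follows exactly the route the paper intends: the paper offers no written proof beyond the remark that the result is ``immediately implied'' by Proposition~\ref{prop-key-loc-closed} and Theorem~\ref{thm-main-1.7}, and your argument is precisely that combination, with the bookkeeping between $A$, $A(u)$, and the inflation of $\pi\restr{A(u)}$ spelled out correctly. Nothing further is needed.
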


\begin{remark}
  \label{rem-loc-closed}
  Even if $A$ is not of type~I, there are a number of general
  situations where $\prima$ necessarily has locally closed points.
  For example, as
  we observe in \cite{echwil:tams08}*{Proposition~1.8}, this is case
  in any subquotient of the group \cs-algebra of an almost connected
  Lie group.  Nevertheless, there are separable \cs-algebras for which
  this property fails (see \cite{echwil:tams08}*{Example~3.3}).
\end{remark}

\section{Examples}
\label{sec:examples}

\subsection{Groupoid dynamical systems}
\label{sec:groupoid-dynamical-systems}

 Let $G$ be
a locally compact Hausdorff groupoid with Haar system
$\set{\lambda^u}_{u\in \go}$. Let
$\pi:\A\to \go$ be an upper semicontinuous $\cs$-bundle over $\go$ and
let $A=\Gamma_0(\go,\A)$.
Assume that $(\A,G,\alpha)$ is a groupoid dynamical system (see, for
example, \cite{muhwil:nyjm08}*{Definition 4.1}). Recall
from \cite{muhwil:dm08}*{\S2} that one can define a Fell bundle
$p:\B\to G$, where $\B:=r^{*}\A=\set{(a,x):\pi(a)=r(x)}$ is the
pull-back of $\A$ via the range map $r$. The multiplication on $\B$ is
defined via
\[
(a,x)(b,y):=(a\alpha_x(b),xy),
\] if $(x,y)\in G^{(2)}$, and the involution is given by
\[
(a,x)^*:=(\alpha_x^{-1}(a^*),x^{-1}).
\]
Then $\Gamma_c(G,r^{*}\A)$ is a $*$-algebra with respect to the
operations
\[
f*g(x)=\int_Gf(y)\alpha_y\bigl(g(y^{-1}x)\bigr)\,d\lambda^{r(x)}(y)
\quad\text{and}\quad
f^*(x)=\alpha_x\bigl(f(x^{-1})^*\bigr).
\]
The \emph{crossed product} $\A\rtimes_\alpha G$ is the enveloping
$\cs$-algebra of $\Gamma_c(G;r^*\A)$.

Recall (see, for example, \cite{muhwil:nyjm08}*{Definition 7.5}) that
a unitary representation of a groupoid $G$ with Haar system
$\set{\lambda^u}_{u\in\go}$ is a triple $(\mu,\go*\HH,L)$ consisting
of a quasi-invariant measure $\mu$ on $\go$, a Borel Hilbert bundle
$\go*\HH$ over $\go$ and a Borel homomorphism $\tilde{U}:G\to
\operatorname{Iso}(\go*\HH)$ such that
\[
\tilde{U}(x)=(r(x),L_x,s(x)),
\]
where $\operatorname{Iso}(\go*\HH)$ is the \emph{isomorphism
  groupoid}. 

A covariant representation $(M,\mu,\go*\HH,U)$ of $(\A,G,\alpha)$
(\cite{muhwil:nyjm08}*{Definition 7.9}) consists of a unitary
representation $(\mu,\go*\HH,U)$ of $G$ and a $C_0(\go)$-linear
representation $M:A\to B(L^2(\go*\HH,\mu))$ such that there are
representations $M_u:A\to B(\HH(u))$ so that
\[
M(a)h(u)=M_u(a)(h(u))\text{ for $\mu$-almost all }u,
\]
and such that there is a $\nu:=\lambda\circ \mu$-null set $N$ such
that for all $x\notin N$,
\[
U_xM_{s(x)}(b)=M_{r(x)}(\alpha_x(b))U_x\text{ for all }b\in A(s(x)).
\]
Recall from \cite{muhwil:nyjm08}*{Proposition 7.11} that if
$(M,\mu,\go*\HH,U)$ is a covariant representation of $(\A,G,\alpha)$,
then there is a $\Vert \cdot \Vert_I$-norm decreasing
$*$-representation $L$ of $\Gamma_c(G;r^*\A)$ called the \emph{integrated
form} of the covariant representation  given by
\[
L(f)h(u)=\int_GM_u(f(x))U_xh(s(x))\Delta(x)^{-\frac{1}{2}}d\lambda^u(x).
\]
Conversely, given any representation $L$ of $\A\rtimes_\alpha G$,
there is a covariant representation $(M,\mu,\go*\HH,U)$ such that $L$
is equivalent to the corresponding integrated form
(\cite{muhwil:nyjm08}*{Theorem 7.12}).

Notice that if $G$ is a group and $(\rho,U)$ is a covariant
representation of $(A,G,\alpha)$, then, since we are treating $G$ as a
groupoid, the corresponding integrated form is given by the formula
\[
\rho\rtimes U(f)=\int_G\rho(f(s))U_s\Delta(s)^{-\frac{1}{2}}d\mu(s)
\]
(compare this formula against, for example,
\cite{wil:crossed}*{Equation (2.19)}).

If $H$ is a closed subgroupoid of $G$ and $L$ is a representation of
$\A\rtimes_{a|_H}H$, then the construction of induced representations
from \cite{simwil:jot11}*{\S4.1} (see also Section \ref{sec:step-i}
above) gives us that the induced representation $\Ind_H^GL$ acts on
the completion of $X\odot \HH_L$ by
\[
(\Ind_H^GL)(f)(\varphi\tensor h)=f*\varphi\tensor h,
\]
where $f*\varphi(z)=\int_Gf(y)\alpha_y\bigl(\varphi(y^{-1}z)\bigr)\,d\lambda^{r(z)}(y)$.
Our Theorem \ref{thm:indirr} seems to be new for this set-up.
\begin{thm}
  \label{thm:irrrep_gcp}
  Assume that $(\A,G,\alpha)$ is a groupoid dynamical system. Let
  $u\in \go$ and suppose that $L$ is an irreducible representation 
  of $\A\rtimes_{\alpha|_{G(u)}}G(u)$. Then $\Ind_{G(u)}^G L$ is an
  irreducible representation of $\A\rtimes_{\alpha}G$. 
\end{thm}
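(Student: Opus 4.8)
The plan is to deduce Theorem~\ref{thm:irrrep_gcp} directly from Theorem~\ref{thm:indirr} by recognizing the groupoid crossed product $\A\rtimes_\alpha G$ as the Fell bundle \cs-algebra $\cs(G,\B)$ of the pull-back bundle $\B=r^{*}\A$ constructed in Section~\ref{sec:groupoid-dynamical-systems}, so that Theorem~\ref{thm:irrrep_gcp} becomes a special case. First I would check that $\B=r^{*}\A$ satisfies the standing hypotheses of Theorem~\ref{thm:indirr}. Separability is inherited from that of $A$ together with the second countability of $G$. Saturation is automatic for the pull-back of a \cs-bundle: for composable $(x,y)$ we have $B(x)\cong A(r(x))$ and $B(y)\cong A(s(x))$, and the Fell-bundle product sends $(a,x)(b,y)$ to $(a\alpha_x(b),xy)$; since $\alpha_x\colon A(s(x))\to A(r(x))$ is a $*$-isomorphism, $B(x)B(y)$ is the dense linear span of the products $ac$ with $a,c\in A(r(x))$, whose closure is $A(r(x))=B(xy)$.

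Next I would use \cite{muhwil:dm08}*{\S2} to identify the convolution $*$-algebra $\Gamma_c(G,r^{*}\A)$ of the groupoid dynamical system with the Fell-bundle convolution algebra $\Gamma_c(G,\B)$; this gives $\A\rtimes_\alpha G\cong\cs(G,\B)$, and restricting to the closed subgroupoid $G(u)$ --- whose unit space is the single point $u$, so that $\B|_{G(u)}$ is precisely the Fell bundle over the group $G(u)$ attached to its action on $A(u)$ --- gives $\A\rtimes_{\alpha|_{G(u)}}G(u)\cong\cs(G(u),\B|_{G(u)})$. Under these identifications I would verify that the pre-imprimitivity bimodule $\Gamma_c(G_u,\E)$ used to build $\Ind_{G(u)}^G L$ for the crossed product (with the actions and inner products recalled in Section~\ref{sec:groupoid-dynamical-systems} from \cite{simwil:jot11}*{\S4.1}) coincides termwise with the one of Section~\ref{sec:induc-repr-fell}. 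The only apparent discrepancy is the twist $\alpha_y$ in the displayed left action $f*\varphi(z)=\int_G f(y)\alpha_y\bigl(\varphi(y^{-1}z)\bigr)\,d\lambda^{r(z)}(y)$, which disappears once the product $f(y)\varphi(y^{-1}z)$ is read in the fibres of $r^{*}\A$, where it equals $f(y)\alpha_y(\varphi(y^{-1}z))$ by the very definition of the pull-back multiplication; the remaining module action and the left and right inner products match in the same way. It follows that $\Ind_{G(u)}^G L$ computed in the crossed-product picture is unitarily equivalent to the representation $\indgug L$ of $\cs(G,\B)$ appearing in Theorem~\ref{thm:indirr}, so when $L$ is irreducible that theorem yields irreducibility of $\indgug L$ and hence of $\Ind_{G(u)}^G L$.

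The hard part, such as it is, will be the bookkeeping in the previous paragraph: confirming that the groupoid-crossed-product induction of \cite{simwil:jot11}*{\S4.1} and the Fell-bundle induction of Section~\ref{sec:induc-repr-fell} genuinely agree under $\A\rtimes_\alpha G\cong\cs(G,\B)$. Since both constructions use the same equivalence bimodule over $G_u$ and the same Haar system on $G$, and since the pull-back encodes $\alpha$ exactly through the fibrewise products and involution, this is a matter of unwinding the definitions in \cite{muhwil:dm08} and \cite{simwil:jot11} and holds no surprises; I would therefore present it tersely, with all the real content already contained in Theorem~\ref{thm:indirr}.
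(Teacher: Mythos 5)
Your proposal is correct and follows exactly the route the paper takes: the paper presents Theorem~\ref{thm:irrrep_gcp} as an immediate consequence of Theorem~\ref{thm:indirr} applied to the pull-back Fell bundle $\B=r^{*}\A$, relying on the identification $\A\rtimes_\alpha G\cong\cs(G,\B)$ from \cite{muhwil:dm08}*{\S2} and the matching of the induction constructions. Your extra verifications of saturation, separability, and the termwise agreement of the bimodule structures are exactly the bookkeeping the paper leaves implicit.
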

Recall from \cite{ionwil:hjm11}*{Remark 3} that the $G$ action on
$\Prim A$ is the same as the usual one: $x\cdot P=\alpha_x(P)$. Our
main theorem (Theorem \ref{thm-main-1.7}) becomes: 
\begin{thm}
  Let $(\A,G,\alpha)$ be a groupoid dynamical system. Let $P\in \Prim
  A$ and let $(\rho,U)$ be a covariant representation of
  $(A,G_P,\alpha_{|_{G_P}})$. Assume that $\ker\rho=P$ and that
  $\rho\rtimes U$ is irreducible. If either $A$ is type~I or if $\rho$ is
  homogeneous, then $\Ind_{G_{P}}^G (\rho\rtimes U)$
  is irreducible.
\end{thm}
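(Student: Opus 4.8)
The plan is to deduce this from the main theorem (Theorem~\ref{thm-main-1.7}) and, in the type~I case, from Proposition~\ref{prop-key-loc-closed}, by passing to the Fell-bundle model $\B := r^{*}\A$ introduced at the start of this subsection. First I would record the relevant properties of that model: $\B$ is separable and automatically saturated, $\Gamma_{0}(\go,\B)=A$, $\A\rtimes_{\alpha}G=\cs(G,\B)$, and, restricting to the closed subgroup $G_{P}$, $\A\rtimes_{\alpha|_{G_{P}}}G_{P}=\cs(G_{P},\B\restr{G_{P}})$. By \cite{ionwil:hjm11}*{Remark~3} the $G$-action on $\prima$ coming from the Rieffel correspondence is the usual one $x\cdot P=\alpha_{x}(P)$, so the stabilizer $G_{P}$ in the statement is precisely the stability group appearing in Theorem~\ref{thm-main-1.7}.

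The key step is then to set up the translation dictionary. Since $\ker\rho=P\supset I(u)$ with $u=\sigma_{A}(P)$, the representation $\rho$ factors through $A(u)$, and the covariance relation for $(\rho,U)$ together with the unitarity of $U$ shows that $\pi(a,x):=\rho(a)U_{x}$ defines a strictly continuous, nondegenerate $*$-homomorphism $\pi:\B\restr{G_{P}}\to B(\WW)$ with $\pi\restr A=\rho$. Comparing \eqref{eq:1} with the integrated-form formula for covariant representations recalled above — the modular functions match because $G_{P}$ is treated as a groupoid — one checks that $L:=\rho\rtimes U$ is exactly the integrated form of $\pi$ under the identification $\cs(G_{P},\B\restr{G_{P}})=\A\rtimes_{\alpha|_{G_{P}}}G_{P}$. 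Thus $L$ is an irreducible representation of $\cs(G_{P},\B\restr{G_{P}})$, it is the integrated form of $\pi$, and the hypothesis $\ker\rho=P$ says precisely that $\pi\restr A$ has kernel $P$. Finally, the formula for $\Ind_{H}^{G}L$ recalled above identifies $\Ind_{G_{P}}^{G}(\rho\rtimes U)$ with $\Ind_{G_{P}}^{G}L$, so it suffices to prove that the latter is irreducible.

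With the dictionary in hand the two cases are immediate. If $\rho$ is homogeneous, then $\pi\restr A$ is homogeneous with kernel $P$, and Theorem~\ref{thm-main-1.7} applies directly. If instead $A$ is of type~I, then every point of $\prima$ is locally closed; in particular $P$ is, so Proposition~\ref{prop-key-loc-closed} shows that $\pi\restr{A(u)}$ is homogeneous. Since $\rho=\pi\restr A$ factors through $A(u)$ and the ideals of $A$ not contained in $P$ correspond to the ideals of $A(u)$ not contained in the image of $P$, homogeneity of $\pi\restr{A(u)}$ is equivalent to homogeneity of $\pi\restr A$; hence $\pi\restr A$ is homogeneous with kernel $P$ and Theorem~\ref{thm-main-1.7} again gives that $\Ind_{G_{P}}^{G}L=\Ind_{G_{P}}^{G}(\rho\rtimes U)$ is irreducible.

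The step I expect to be the main obstacle — really the only non-formal part of the argument — is the dictionary of the second paragraph: checking that the integrated form of the covariant representation $(\rho,U)$ coincides on the nose with the integrated form of the Fell-bundle $*$-homomorphism $\pi$, keeping the modular-function conventions straight, that $\pi\restr A=\rho$, and that the two notions of induced representation agree. All of this is implicit in \cite{muhwil:dm08}, \cite{muhwil:nyjm08} and \cite{simwil:jot11} together with the explicit formulas recalled in this subsection; once it is verified, both conclusions follow at once from the results already established.
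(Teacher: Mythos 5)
Your proposal is correct and follows essentially the same route as the paper, which presents this theorem as an immediate specialization of Theorem~\ref{thm-main-1.7} (together with Proposition~\ref{prop-key-loc-closed} for the type~I case) via the identification $\A\rtimes_{\alpha}G=\cs(G,r^{*}\A)$ and the fact that the $G$-action on $\prima$ is the usual one. The only difference is that you spell out the covariant-representation/Fell-bundle dictionary explicitly, which the paper leaves implicit by citing \cite{muhwil:dm08} and \cite{muhwil:nyjm08}.
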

In particular we recover the main result of \cite{echwil:tams08}.

\subsection{Green-Renault's twisted groupoid dynamical systems}
\label{sec:green-rena-group}

Suppose that
\[
\go\to S\xrightarrow{i}\Sigma\xrightarrow{j}G\rightarrow \go
\]
is a groupoid extension of locally compact groupoids over $\go$ where
$S$ is a group bundle of \emph{abelian} groups admitting a Haar
system. We view $S$ as a closed subgroupoid of $\Sigma$.  We assume
that we have a groupoid dynamical system $(\A,\Sigma,\alpha)$, so that
$\pi:\A\to\go=\Sigma^{(0)}$ is an upper semicontinuous
$\cs$-bundle. We also need an element $\chi\in \prod_{s\in
  S}M(A(r(s)))$ such that
\[
(s,a)\mapsto \chi(s)a
\]
is continuous from $S*\A$ to $\A$, and such that
\[
\alpha_s(a)=\chi(s)a\chi(s)^*\;\text{for all }\;(s,a)\in S*\A,
\]
and
\[
\chi(\sigma s\sigma^{-1})=\overline{\alpha}_\sigma(\chi(s))\;\text{
  for }\;(\sigma,s)\in \Sigma^{(2)}.
\]
Following \cite{ren:jot87} and \cite{ren:jot91}, we call
$(G,\Sigma,\A)$ a twisted groupoid dynamical system.  As in
\cite{muhwil:dm08}*{Example 2.5}, we define an $S$-action on
$r^*\A=\{(a,\sigma)\,:\,\pi(a)=r(\sigma)\}$ by
\[
(a,\sigma)\cdot s:=(a\chi(s)^*,s\sigma).
\]
The associated Fell bundle is then $\B:=r^*\A/S$ with the map $p:\B\to
G$, $p([a,\sigma])=j(\sigma)$. The operations in $\B$ are defined via
(see \cite{muhwil:dm08}*{Example 2.5} for details):
\[ [a,\sigma][b,\tau]:=[a\alpha_\sigma(b),\sigma\tau]
\]
if $(j(\sigma),j(\tau))\in G^{(2)}$ and
\[ [a,\sigma]^*=[\alpha_\sigma^{-1}(a^*),\sigma^{-1}].
\]

To define a section of $\B$, we need a continuous function
$f:\Sigma\to \A$ such that $f(\sigma)\in A(r(\sigma))$ and
$f(s\sigma)=f(\sigma)\chi(s)^*$ for all $s\in S$ and $\sigma\in
\Sigma$ such that $(s,\sigma)\in \Sigma^{(2)}$. The corresponding
section is given by
$\check{f}(j(\sigma))=[f(\sigma),\sigma]$. Replacing $\check{f}$ with
$f$, the $*$-operations on $\Sigma$ are given by
\[
f*g(\sigma)=\int_Gf(\tau)\alpha_{\tau}\bigl(g(\tau^{-1}\sigma)\bigr)\,d\lambda^{r(j(\sigma))}(\tau)\quad\text{and}\quad
f^*(\sigma)=\alpha_\sigma(f(\sigma^{-1})^*).
\]
As described in \cite{muhwil:dm08}*{Example 2.10}, the completion is
Renault's $\cs(G,\Sigma,\A,\lambda)$ from \cite{ren:jot87} and
\cite{ren:jot91}.

A covariant representation $(M,\mu,\go*\HH,U)$ of $(G,\Sigma,\A)$
(\cite{ren:jot87}*{Definition 3.4}) consists of a unitary
representation $(\mu,\go*\HH,U)$ of $\Sigma$ and a $C_0(\go)$-linear
representation of $M:A\to B(L^2(\go*\HH),\mu)$ so that there are
representations $M_u:A\to B(H(u))$ such that
\[
M(a)h(u)=M_u(a)(h(u))\;\text{for $\mu$-almost all }u,
\]
and such that there is a $\mu$-conull set $V$ such that
\[
U_xM_{s(x)}(a)=M_{r(x)}(\alpha_x(a))U_x\;\text{for all }x\in \Sigma_V
\text{ and } a\in A(s(x)),
\]
and
\[
U_s=M_{r(s)}(\chi(s))\;\text{for all}\; s\in S_V.
\]
If $(M,\mu,\go*\HH,U)$ is a covariant representation of
$(G,\Sigma,\A)$, then \cite{ren:jot87}*{Proposition 3.5} (see also
\cite{muhwil:dm08}*{Proposition 4.10}) implies that there is a $\Vert
\cdot\Vert_I$-norm decreasing $*$-representation $L$ of
$\Gamma_c(G,\Sigma,\A)$, called the integrated form of the covariant
representation, given by
\[
L(f)h(u)=\int_GM_u(f(x))U_xh(s(x))\Delta(j(x))^{-1/2}d\lambda^u(j(x)).
\]
Conversely, \cite{ren:jot87}*{Theorem 4.1} and
\cite{muhwil:dm08}*{Theorem 4.13} imply that every representation of
$C^*(G,\Sigma,\A)$ is equivalent to the integrated form of a covariant
representation.

Notice that if $\Sigma$ is a group and $S$ is an abelian subgroup of
$\Sigma$ then $G=\Sigma/S$. We recover the twisted dynamical systems
of Green \cite{gre:am78} and \DH{\d{\u{a}}}ng Ng\d oc
\cite{ngo:77}. If $(\rho,U)$ is 
a covariant representation of $(A,G,\alpha)$ that preserves $\chi$,
i.e. $\overline{\rho}(\chi(s))=U(s)$ for all $s\in S$, then, since we
are treating groups as groupoids, the integrated form
$\rho\rtimes^\chi U$ of $(\rho,U)$ is
\[
\rho\rtimes^\chi U(f)=
\int_G\rho(f(s))U(s)\Delta(\dot{s})^{-\frac{1}{2}}d\mu(\dot{s}).
\]
If $H$ is a closed subgroupoid of $G$ and $L$ is a representation of
$\cs(H,\Sigma,\A)$, then the induced representation $\Ind_H^G$ acts on
the completion of $X\odot \HH_L$ (see Section \ref{sec:step-i} for
details) via
\[
\Ind_H^G(L)(f)(\phi\tensor h)=f*\phi\tensor h,
\]
where
$f*\phi(z)=\int_Gf(y)\alpha_y(g(y^{-1}z))\,d\lambda^{r(j(z))}(y)$. As
an immediate consequence of Theorem \ref{thm:indirr} we obtain the
following:
\begin{thm}
  Assume that $(G,\Sigma,\A)$ is a twisted groupoid dynamical
  system. Let $u\in\go$ and suppose that $L$ is an irreducible
  representation of $\cs(G(u),\Sigma(u),\A)$. Then $\Ind_{G(u)}^GL$ is
  an irreducible representation of $\cs(G,\Sigma,\A)$.
\end{thm}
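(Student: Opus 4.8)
The plan is to reduce the statement to Theorem~\ref{thm:indirr} by viewing $\cs(G,\Sigma,\A)$ as a Fell bundle \cs-algebra and then tracking the induction construction through the identification. First I would recall from \cite{muhwil:dm08}*{Example~2.10} that, with $\B=r^{*}\A/S$ and $p([a,\sigma])=j(\sigma)$ as above, $\cs(G,\Sigma,\A)$ is canonically isomorphic to $\cs(G,\B)$: a section of $\B$ coming from $f\colon\Sigma\to\A$ with $f(s\sigma)=f(\sigma)\chi(s)^{*}$ corresponds to $\check f(j(\sigma))=[f(\sigma),\sigma]$, and under this correspondence the convolution and involution written above for the twisted system are exactly the $*$-algebra operations of $\Gamma_{c}(G,\B)$. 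Since $p\colon\B\to G$ is a separable, saturated Fell bundle over a locally compact groupoid with a Haar system, Theorem~\ref{thm:indirr} is available for it.

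Next I would identify $\B\restr{G(u)}$ with the Fell bundle of a group-level twisted dynamical system. Because $j$ is a groupoid morphism over $\go$ with kernel $S$, the set $j^{-1}(G(u))$ is the isotropy group $\Sigma(u)$ of $\Sigma$ at $u$, and it sits in an extension $\{u\}\to S_{u}\to\Sigma(u)\to G(u)\to\{u\}$, where $S_{u}$ is the fibre of the group bundle $S$ over $u$ --- a locally compact abelian group carrying the Haar measure supplied by the Haar system on $S$. Restricting $\A$ to the point $u$ produces the \cs-algebra $A(u)$, and $\chi$ restricts to a distinguished element of $\prod_{s\in S_{u}}M(A(u))$ satisfying the same identities, so that $(G(u),\Sigma(u),A(u))$ is a Green--Renault twisted dynamical system over a point, i.e.\ a genuinely group-level one. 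Chasing the definition $p([a,\sigma])=j(\sigma)$ gives $p^{-1}(G(u))=\set{[a,\sigma]:\sigma\in\Sigma(u)}$, and one checks that with its operations this is precisely $r^{*}(A(u))/S_{u}$, the Fell bundle attached to $(G(u),\Sigma(u),A(u))$; that is, $\B\restr{G(u)}$ is that Fell bundle. Applying \cite{muhwil:dm08}*{Example~2.10} once more --- now at the level of the group $G(u)$ --- yields $\cs(G(u),\Sigma(u),\A)\cong\cs(G(u),\B\restr{G(u)})$, so $L$ transports to an irreducible representation of $\cs(G(u),\B\restr{G(u)})$.

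Finally I would check that the induced representation displayed just before the statement --- acting on the completion of $X\odot\HH_{L}$ with $f*\phi(z)=\int_{G}f(y)\alpha_{y}\bigl(\phi(y^{-1}z)\bigr)\,d\lambda^{r(j(z))}(y)$ --- becomes, under these identifications, exactly the Fell bundle induced representation $\Ind_{G(u)}^{G}L$ of $\cs(G,\B)$ recalled in Section~\ref{sec:induc-repr-fell}; indeed the bimodule $X$ and the convolution formula are just the twisted-system descriptions of the ones for $\B$. Theorem~\ref{thm:indirr} then applies directly and shows that $\Ind_{G(u)}^{G}L$ is an irreducible representation of $\cs(G,\B)=\cs(G,\Sigma,\A)$.

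The one step that requires genuine care is the identification of $\B\restr{G(u)}$ in the second paragraph: one must verify that the construction $\A\mapsto r^{*}\A/S$ that produces $\B$ is compatible with passage to the isotropy subgroup, and in particular that the distinguished multiplier $\chi$ restricts as claimed. Everything else is a direct translation of definitions recorded in \cite{muhwil:dm08}, so I do not anticipate any serious obstacle beyond careful bookkeeping.
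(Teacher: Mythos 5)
Your proposal is correct and follows essentially the same route as the paper, which derives this theorem as an immediate consequence of Theorem~\ref{thm:indirr} via the identification of $\cs(G,\Sigma,\A)$ with $\cs(G,\B)$ for the Fell bundle $\B=r^{*}\A/S$ from \cite{muhwil:dm08}*{Example~2.10}. You simply make explicit the bookkeeping (restriction of $\B$ to $G(u)$ and matching of the induction bimodules) that the paper leaves implicit.
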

Using \cite{ionwil:hjm11}*{Lemma 2.1} one obtains that the $G$ action
on $\Prim A$ is the same as the usual one. This fact, together with
\ref{thm-main-1.7}, implies the following:
\begin{thm}
  Let $(G,\Sigma,\A)$ be a twisted groupoid dynamical system. Let
  $P\in \Prim A$ and let $(\rho,U)$ be a covariant representation of
  $(G_P,\Sigma_P,\A)$ that preserves $\chi$ and is such that
  $\ker\rho=P$ and $\rho\rtimes^\chi U$ is irreducible. If either $A$
  is of type~I or if $\rho$ is homogeneous, then $\Ind_{G_{P}}^G
  \rho\rtimes^\chi U$ is irreducible.
\end{thm}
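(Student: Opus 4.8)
The plan is to deduce the statement from the Main Theorem, Theorem~\ref{thm-main-1.7}, by passing to the associated Fell bundle. As recalled in this section, the twisted groupoid dynamical system $(G,\Sigma,\A)$ is encoded by the Fell bundle $p:\B\to G$ with $\B=r^{*}\A/S$, and under this identification $\cs(G,\Sigma,\A)=\cs(G,\B)$, $\cs(G_{P},\Sigma_{P},\A)=\cs(G_{P},\B\restr{G_{P}})$, $A=\Gamma_0(\go,\A)$ is the \cs-algebra of $\B$ over $\go$, and the twisted induced representation $\Ind_{G_{P}}^{G}(\rho\rtimes^{\chi}U)$ agrees with the Fell bundle induced representation $\Ind_{G_{P}}^{G}L$. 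The last point is checked by matching the convolution formula $f*\varphi(z)=\int_{G}f(y)\alpha_{y}(\varphi(y^{-1}z))\,d\lambda^{r(j(z))}(y)$ displayed before the statement with the formula $f*\varphi(z)=\int_{G}f(y)\varphi(y^{-1}z)\,d\lambda^{r(z)}(y)$ of Section~\ref{sec:induc-repr-fell}, using that multiplication in $\B=r^{*}\A/S$ is $[a,\sigma][b,\tau]=[a\alpha_{\sigma}(b),\sigma\tau]$.

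Next I would translate the representation-theoretic data. A covariant representation $(\rho,U)$ of $(G_{P},\Sigma_{P},\A)$ preserving $\chi$ is exactly the datum needed to form the integrated form $L=\rho\rtimes^{\chi}U$, which is a representation of $\cs(G_{P},\B\restr{G_{P}})$; since $G_{P}$ is a group (being a subgroup of the stability group $G(u)$ for $u=\sigma_{A}(P)$), \cite{kmqw:nyjm10}*{Lemma~1.3} identifies $L$ with the integrated form of a strictly continuous, nondegenerate $*$-homomorphism $\pi:\B\restr{G_{P}}\to B(\HH_{\rho})$, and under the identification $B(e)=A$ one has $\pi\restr A=\rho$. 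Hence $\ker(\pi\restr A)=\ker\rho=P$, i.e.\ $L$ has kernel $P$ in the sense of Section~\ref{sec:red-group-case}, and $L$ is irreducible precisely because $\rho\rtimes^{\chi}U$ is. I would also invoke \cite{ionwil:hjm11}*{Lemma~2.1}, as already noted just before the statement, so that the $G$-action on $\prima$ coming from the Rieffel correspondence coincides with the natural one; thus the stability group $G_{P}$ appearing in the statement is the same one that appears in Theorem~\ref{thm-main-1.7}.

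With the translation in place, it remains only to verify the homogeneity hypothesis of Theorem~\ref{thm-main-1.7} in each of the two cases. If $\rho$ is assumed homogeneous this is immediate: $\pi\restr A=\rho$ is homogeneous with kernel $P$, so Theorem~\ref{thm-main-1.7} applies and $\Ind_{G_{P}}^{G}L=\Ind_{G_{P}}^{G}(\rho\rtimes^{\chi}U)$ is irreducible. If instead $A$ is of type~I, then every point of $\prima$ is locally closed, so in particular $P$ is, whence $\B$ satisfies strong-EHI by the results of Section~\ref{sec:type-i-case} (equivalently, Proposition~\ref{prop-key-loc-closed} shows $\pi\restr A$ is homogeneous, and one again applies Theorem~\ref{thm-main-1.7}); Definition~\ref{def-ehi} then gives that $\Ind_{G_{P}}^{G}L$ is irreducible.

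The substantive input has all been established earlier --- Theorem~\ref{thm-main-1.7}, and behind it Theorem~\ref{thm:indirr} and Proposition~\ref{prop-key-loc-closed} --- so the proof amounts to bookkeeping. The only step demanding a little care, and the place I would expect any friction, is confirming that the dictionary between twisted groupoid dynamical systems and Fell bundles $r^{*}\A/S$ carries inducing, kernels, and homogeneity across faithfully; each of these is routine given the explicit formulas recorded here and the descriptions in \cite{muhwil:dm08}*{Example~2.5 and Example~2.10}.
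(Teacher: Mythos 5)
Your proposal is correct and follows exactly the route the paper intends: the paper offers no separate argument beyond the remark that \cite{ionwil:hjm11}*{Lemma 2.1} identifies the $G$-action on $\Prim A$ and that the result then follows from Theorem~\ref{thm-main-1.7} (with the type~I case handled by Proposition~\ref{prop-key-loc-closed}). Your write-up simply makes explicit the bookkeeping --- the dictionary between $(G,\Sigma,\A)$ and the Fell bundle $\B=r^{*}\A/S$, the identification $\pi\restr A=\rho$, and the matching of induced representations --- that the paper leaves to the reader.
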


\bibliographystyle{amsxport}
\bibliography{references-nov01}

\end{document}